\theoremstyle{plain}
\newtheorem{theorem}{Theorem}
\newtheorem{lemma}[theorem]{Lemma}
\newtheorem{cor}[theorem]{Corollary}
\newtheorem{prop}[theorem]{Proposition}
\newtheorem{remark}[theorem]{Remark}
\begin{document}

\title{Resolving dominating  partitions in graphs}

\author[1]{Carmen Hernando\thanks{Partially supported by projects MTM2015-63791-R (MINECO/FEDER) and Gen.Cat. DGR2017SGR1336, carmen.hernando@upc.edu}}
\author[1]{Merc\`e Mora\thanks{Partially supported by projects MTM2015-63791-R (MINECO/FEDER), Gen.Cat. DGR2017SGR1336 and H2020-MSCA-RISE-2016-734922 CONNECT, merce.mora@upc.edu}}
\author[1]{Ignacio M. Pelayo\thanks{Partially supported by project MINECO MTM2014-60127-P, ignacio.m.pelayo@upc.edu}}

\affil[1]{Departament de Matem\`atiques, Universitat Polit\`ecnica de Catalunya}

\date{}

\maketitle


\begin{abstract}
A  partition $\Pi=\{S_1,\ldots,S_k\}$ of the vertex set of a connected graph $G$ is  called a \emph{resolving  partition}   of $G$ if for every pair of vertices $u$ and $v$, $d(u,S_j)\neq d(v,S_j)$, for some part $S_j$.
The \emph{partition  dimension} $\beta_p(G)$  is the minimum  cardinality of a resolving partition of $G$.
A resolving partition $\Pi$ is called \emph{resolving dominating} if for every vertex $v$ of $G$, $d(v,S_j)=1$, for some  part $S_j$ of $\Pi$.
The \emph{dominating partition dimension} $\eta_p(G)$  is the minimum  cardinality of a resolving dominating partition of $G$.

In this paper we show, among other results,  that $\beta_p(G) \le \eta_p(G) \le \beta_p(G)+1$.
We also characterize all connected graphs of order $n\ge7$ satisfying any of the following conditions:  $\eta_p(G)= n$, $\eta_p(G)= n-1$, $\eta_p(G)= n-2$ and $\beta_p(G) = n-2$.
Finally, we present some tight  Nordhaus-Gaddum bounds for both the  partition dimension $\beta_p(G)$ and the dominating partition dimension $\eta_p(G)$.

\vspace{+.1cm}\noindent \textbf{Keywords:} resolving partition, resolving dominating partition, metric location, resolving domination, partition dimension, dominating partition dimension.

\vspace{+.1cm}\noindent \textbf{AMS subject classification:} 05C12, 05C35, 05C69.
\end{abstract}
\vspace{0.5cm}


\section{Introduction}\label{sec1:intro}

Domination and location in graphs are two important subjects that have received a high attention in the literature,
usually separately, but sometimes also both together.
	These concepts are useful to distinguish the vertices of a graph in terms of distances to a given set of vertices or by considering their neighbors in this set.
{Resolving sets} were introduced independently in the 1970s by Slater~\cite{slater}, as \emph{locating sets}, and  by Harary and Melter~\cite{hararymelter}, whereas {dominating sets} were defined in the 1960s by Ore~\cite{ore}.
Both types of sets have many and varied applications in other areas.
For example, resolving sets have applications in robot navigation~\cite{slater}, combinatorial optimization~\cite{st04}, game theory \cite{goddard}, pharmaceutical chemistry \cite{chartrand} and in other contexts \cite{ chmpp12, chmppsw07}.
On the other hand, dominating sets  are  helpful to design and analyze communication networks \cite{cockayne, sasireta} and to model biological networks \cite{haynes}.

Many variations of location in graphs have since been defined (see  survey  \cite{sz}). For example, in 2000, Chartrand, Salehi and Zhang study the resolvability of graphs in terms of partitions~\cite{ChaSaZh00}, 
	as a generalization of resolving sets when the vertices are classified in different types.
A few years later, 
{resolving dominating sets} were introduced by Brigham, Chartrand, Dutton and  Zhang~\cite{bcdz03} and independently by Henning and Oellermann~\cite{heoe04} as {metric-locating-dominating sets}, combining the usefulness of resolving sets and dominating sets. 
Resolving dominating sets have been further studied in \cite{chmpp13,gohemo17,hmp14}.
In this paper, following the ideas of  these  works, we introduce the \emph{resolving dominating partitions}, 
	as a way for distinguishing the vertices of a graph by using on the one hand  partitions, and on the other hand, both domination and location.

\subsection{Basic terminology}

All the graphs considered are undirected, simple, finite and (unless otherwise stated) connected.
Let $v$ be a vertex of a graph $G$.
The \emph{open neighborhood} of $v$ is $\displaystyle N_G(v)=\{w \in V(G) :vw \in E\}$, and the \emph{closed neighborhood} of $v$ is $N_G[v]=N_G(v)\cup \{v\}$ (we will write $N(v)$ and $N[v]$ if the graph $G$ is clear from the context).
The \emph{degree} of $v$ is $\deg(v)=|N(v)|$.
The minimum degree  (resp. maximum degree) of $G$ is $\delta(G)=\min\{\deg(u):u \in V(G)\}$ (resp. $\Delta(G)=\max\{\deg(u):u \in V(G)\}$).
If $\deg(v)=1$, then $v$ is said to be a  \emph{leaf} of $G$.

The distance between vertices $v,w\in V(G)$ is denoted by $d_G(v,w)$, or $d(v,w)$ if the graph $G$ is clear from the context.
The diameter of $G$ is ${\rm diam}(G) = \max\{d(v,w) : v,w \in V(G)\}$.
The distance between a vertex $v\in V(G)$ and a set of vertices $S\subseteq V(G)$, denoted by  $d(v,S)$, is the minimum of the distances between $v$ and the vertices of $S$, that is, $d(v,S)=\min\{d(v,w):w\in S\}$.

Let $u,v \in V(G)$ be  a pair of vertices such that  $d(u,w)=d(v,w)$ for all $w\in V(G)\setminus\{u,v\}$, i.e.,  such that  either $N(u)=N(v)$ or $N[u]=N[v]$.
In both cases, $u$ and $v$ are said to be \emph{twins}.
Let $W$ be a set of vertices of $G$.
If the vertices of $W$ are pairwise twins, then $W$ is called a \emph{twin set} of $G$.

Let $W\subseteq V(G)$ be a subset of vertices of  $G$.
The  \emph{closed neighborhood} of $W$ is $N[W]=\cup_{v\in W} N[v]$.
The subgraph of $G$ induced by $W$, denoted by $G[W]$, has $W$ as vertex set and $E(G[W]) = \{vw \in E(G) : v \in W,w \in W\}$.

The \emph{complement} of $G$,  denoted by $\overline{G}$, is the  graph on the same vertices as $G$ such that two vertices are adjacent in $\overline{G}$ if and only if they are not adjacent in $G$.
Let $G_1$, $G_2$ be two graphs having disjoint vertex sets.
The (\emph{disjoint}) \emph{union} $G=G_1+G_2$ is the graph such that  $V(G)=V(G_1)\cup V(G_2)$ and $E(G)=E(G_1)\cup E(G_2)$.
The \emph{join} $G=G_1\vee G_2$ is the graph such that
$V(G)=V(G_1)\cup V(G_2)$ and $E(G)=E(G_1)\cup E(G_2)\cup \{uv:u\in V(G_1),v\in V(G_2)\} $.

\subsection{Metric dimension and partition dimension}

A set of vertices $S\subseteq V(G)$ of a graph $G$ is a \emph{resolving set} of $G$
if for every pair of distinct vertices $v,w\in V(G)$,  $d(v,x)\ne d(w,x)$, for some vertex $x\in S$.
The \emph{metric dimension} $\beta(G)$ of $G$ is the minimum cardinality of a resolving set.

Resolving sets were introduced independently in papers \cite{hararymelter} and  \cite{slater} (in this last work they were called \emph{locating sets}), and since then they have been widely investigated
(see \cite{chmppsw07,hmpsw10,st04} and their references).

Let $\Pi=\{S_1,\ldots,S_k\}$ be  a partition of $V(G)$. We denote by $r(u|\Pi)$
the vector of distances between a vertex $u\in V(G)$ and the elements of  $\Pi$, that is, $r(u|\Pi)=(d(u,S_1),\dots ,d(u,S_k))$.
If $u,v\in V(G)$, we say that a part $S_i$ of $\Pi$ \emph{resolves} $u$ and $v$ if $d(u,S_i)\ne d(v,S_i)$.
If $V'\subseteq V(G)$, we say that a part $S_i$ of $\Pi$ \emph{resolves} $V'$ if $S_i$ resolves every pair of vertices  of $V'$.

A partition $\Pi=\{S_1,\ldots,S_k\}$ is called a  \emph{resolving partition} of $G$  if for any pair of distinct vertices $u,v\in V(G)$, $r(u|\Pi)\neq r(v|\Pi)$, that is, if the set $\{u,v\}$ is resolved by some part $S_i$ of $\Pi$.

The \emph{partition  dimension} $\beta_p(G)$ of $G$ is the minimum  cardinality of a resolving partition of $G$.
Resolving partitions were introduced  in \cite{ChaSaZh00}, and further studied in
\cite{chagiha08,fegooe06,gyjakuta14,grstramiwi14,royele14,tom08}. 
In some of these papers the partition dimension of $G$ is denoted by $pd(G)$.
Next, some known results concerning this parameter are shown.

\begin{theorem} [\cite{ChaSaZh00}]\label{mdpd}
Let $G$  be a graph of order $n\ge2$. Then,
\begin{enumerate}[(1)]
\item $\beta_p(G) \le \beta(G)+1$.
\item $\beta_p(G) \le n-{\rm diam}(G)+1$. Moreover, this bound is sharp.
\item $\beta_p(G)=2$ if and only if $G$ is isomorphic to the path $P_n$.
\item $\beta_p(G)=n$ if and only if $G$ is isomorphic to the complete graph $K_n$.
\item If $n\ge6$, then $\beta_p(G)=n-1$ if and only if $G$ is isomorphic to either the star $K_{1,n-1}$, or
	the complete split graph $K_{n-2} \vee \overline{K_2}$,  or the graph $K_1\vee (K_1+K_{n-2})$.
\end{enumerate}
\end{theorem}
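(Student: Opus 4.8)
The plan is to prove the five parts roughly in order, since each later part leans on an earlier one, and to reserve the bulk of the effort for part (5), which is by far the deepest.

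For part (1) I would start from a minimum resolving set $S=\{x_1,\dots,x_k\}$ with $k=\beta(G)$ and take the partition $\Pi=\{\{x_1\},\dots,\{x_k\},V(G)\setminus S\}$, which has $k+1$ parts. For any $u\ne v$, resolvability of $S$ yields some $x_i$ with $d(u,x_i)\ne d(v,x_i)$, and then the singleton part $\{x_i\}$ resolves $u,v$ in $\Pi$; hence $\beta_p(G)\le\beta(G)+1$. For part (2) I would fix a geodesic $w_0w_1\cdots w_d$ with $d=\mathrm{diam}(G)$ and take $\Pi$ to consist of the single part $\{w_0,\dots,w_{d-1}\}$ together with singletons for the remaining $n-d$ vertices, giving $n-d+1$ parts. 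Every vertex that is its own singleton is trivially distinguished, so only pairs $w_i,w_j$ inside the big part require checking; since the path is geodesic, $d(w_i,w_d)=d-i$, so the singleton $\{w_d\}$ separates them. Sharpness comes from $P_n$, whose diameter is $n-1$ and whose partition dimension is $2$.

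Part (3), direction $\beta_p(P_n)=2$, is immediate from part (1) with $\beta(P_n)=1$ and the trivial bound $\beta_p\ge2$ (a single part gives all vertices the vector $(0)$). For the converse, given a resolving partition $\{S_1,S_2\}$, each distance vector has a zero coordinate, so cross pairs are automatically resolved and the condition collapses to: $v\mapsto d(v,S_2)$ is injective on $S_1$ and $v\mapsto d(v,S_1)$ is injective on $S_2$. A layering argument (following a shortest path from a farthest vertex to the other side) forces the distance values on each side to be exactly $\{1,2,\dots\}$ with one vertex per layer, and these interlocking layers rigidly assemble $G$ into a path. Part (4) then drops out of part (2): if $G\not\cong K_n$ then two vertices are non-adjacent, so $\mathrm{diam}(G)\ge2$ and $\beta_p(G)\le n-1$; conversely in $K_n$ any part of size $\ge2$ contains an unresolvable pair (all distance vectors collapse to entries $0$ and $1$), so only the all-singleton partition works and $\beta_p(K_n)=n$.

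Part (5) is the crux, and where I expect essentially all the difficulty. First I would pin down the regime: by part (2), $\beta_p(G)=n-1$ forces $\mathrm{diam}(G)\le2$, and diameter $1$ gives $K_n$ with $\beta_p=n$, so $\mathrm{diam}(G)=2$. The engine is the twin observation: if $u,v$ are twins then $d(u,\cdot)$ and $d(v,\cdot)$ agree off $\{u,v\}$, so no part resolves them unless it contains exactly one of them, forcing twins into distinct parts. I would then translate $\beta_p(G)=n-1$ into two conditions: a resolving $(n-1)$-partition exists (equivalently, some non-twin pair exists) but every $(n-2)$-partition fails. An $(n-2)$-partition has one of two shapes, a single part of size $3$ with the rest singletons, or two parts of size $2$ with the rest singletons, and forbidding all of these under $\mathrm{diam}(G)=2$ severely restricts the adjacency pattern. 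The remaining work is a case analysis on the twin classes of $G$, showing the only survivors for $n\ge7$ (the excerpt states $n\ge6$) are $K_{1,n-1}$, $K_{n-2}\vee\overline{K_2}$ and $K_1\vee(K_1+K_{n-2})$; the forward direction amounts to a direct verification that each of these three graphs has a large twin set forcing $\beta_p\ge n-1$ together with an explicit resolving $(n-1)$-partition. The main obstacle is organizing this analysis so that both families of $(n-2)$-partitions are ruled out simultaneously and no sporadic small-order graphs slip through, which is precisely why a lower bound on $n$ must be assumed.
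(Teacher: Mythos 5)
The first thing to note is that the paper does not prove this statement at all: it is Theorem~\ref{mdpd}, imported verbatim from \cite{ChaSaZh00}, so there is no internal proof to measure your attempt against. Judged on its own terms, your parts (1), (2) and (4) are complete and correct: the singletons-plus-remainder partition for (1), the geodesic part $\{w_0,\dots,w_{d-1}\}$ resolved by the singleton $\{w_d\}$ for (2) (together with the standard observation that vertices in distinct parts are automatically resolved), and the deduction of (4) from (2) combined with the collapse of all distance vectors in $K_n$ are exactly the right arguments.

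The gaps are in (3) and (5). For the converse of (3) you assert that the two injectivity conditions ``rigidly assemble $G$ into a path,'' but that assembly is the entire content of the direction: one must show that the values $d(\cdot,S_2)$ on $S_1$ form the full interval $\{1,\dots,|S_1|\}$ (each value $k\ge 2$ forces a neighbour at value $k-1$), that the unique vertex at each level has exactly the forced adjacencies, and that no further edges can exist; none of this is written down. For (5) you have correctly identified the skeleton --- the diameter must be $2$, twins must lie in distinct parts, and an $(n-2)$-partition is either one triple plus singletons or two pairs plus singletons --- but the case analysis that shows every such partition fails precisely for the three listed graphs \emph{is} the theorem, and it is absent. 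Your hedge ``for $n\ge 7$ (the excerpt states $n\ge 6$)'' also signals that the analysis was not carried through: the correct and tight threshold is $n\ge 6$, as the paper's own Remark records via $\beta_p(C_4\vee K_1)=4=n-1$ at $n=5$. So the proposal is a sound plan whose easy components are done, but its two hard components are only declared, not proved.
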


\begin{remark}
Notice that the restriction $n\ge6$ of  Theorem \ref{mdpd}(5) is tight, since $\beta_p(C_4)=3$ and $\beta_p(C_4\vee K_1)=4$.
Thus, in \cite{ChaSaZh00}, the condition $n\ge3$ of Theorem 3.3 is incorrect.
\end{remark}

\begin{prop} [\cite{chagiha08}]
Given a pair of integers $a,b$ such that $3 \le a \le b+1$, there exists a graph $G$ with $\beta_p(G)=a$ and $\beta(G)=b$.
\end{prop}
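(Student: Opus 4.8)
The plan is to exhibit, for every admissible pair $(a,b)$, an explicit graph realizing both invariants, organizing the argument around the bound $\beta_p(G)\le\beta(G)+1$ of Theorem~\ref{mdpd}(1): this inequality shows that $a\le b+1$ is not merely assumed but necessary, so the prescribed range is exactly the feasible one, and nothing is lost by treating the two boundary regimes $a=b+1$ and $a\le b$ separately. The extreme case $a=b+1$ is immediate from the complete graph: by Theorem~\ref{mdpd}(4) one has $\beta_p(K_a)=a$, while the fact that all vertices of $K_a$ are pairwise twins gives $\beta(K_a)=a-1=b$. Hence it remains to handle $3\le a\le b$.

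For the generic case I would use a caterpillar $G_{a,b}$: a spine $p_0 p_1\cdots p_{s+1}$ with two plain endpoints, a twin set of $a$ leaves attached at $p_1$, and a twin pair (two leaves) attached at each of a suitable number of further internal spine vertices. Two features drive the construction. First, the $a$ mutually twin leaves at $p_1$ must lie in pairwise distinct parts of any resolving partition, which forces $\beta_p(G_{a,b})\ge a$. Second, since $G_{a,b}$ is a tree, its metric dimension is computed exactly by the leaf/exterior-major-vertex formula $\beta(T)=L(T)-\mathrm{ex}(T)$; the length of the spine and the number of attached twin pairs tune this count, and I would solve for these parameters so that the formula returns precisely $b$.

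The heart of the proof is the matching upper bound $\beta_p(G_{a,b})\le a$, for which I would display an explicit partition into $a$ parts and verify that it resolves every pair of vertices. The idea is to let one part act as a localized reference near $p_0$, so that the distance from any vertex to that part encodes its position along the spine and thereby separates vertices hanging from different spine vertices; each twin set or twin pair is then spread across distinct parts so that, within a fixed gadget, the location of a vertex's own part pins it down. The distance vector of a vertex would thus record both which gadget it belongs to (via the spine coordinate) and which vertex of that gadget it is (via the part containing it), yielding pairwise distinct vectors.

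I expect the main obstacle to be exactly this verification, together with the bookkeeping needed to hit the value $b$ on the nose. The resolving-partition check requires ruling out all cross-type coincidences (a spine vertex sharing a vector with a leaf, or leaves from two different gadgets colliding), which is a finite but delicate case analysis; and the tree-dimension computation is sensitive to endpoint effects, since a plain endpoint is itself a terminal leaf of its neighbor, so the cases of small gap $b-a\in\{0,1\}$ may need the gadget counts adjusted by hand. Once the construction is calibrated so that $\beta(G_{a,b})=b$ and the $a$-part partition is shown to resolve $G_{a,b}$, the inequalities $a\le\beta_p(G_{a,b})\le a$ close the argument and, together with the complete-graph case, cover the whole admissible range.
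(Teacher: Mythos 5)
Note first that the paper does not prove this proposition at all; it is quoted from \cite{chagiha08}, so there is no in-paper argument to compare against and your proposal must stand on its own. The extreme case $a=b+1$ via $K_a$ is correct and complete. For $3\le a\le b$, however, the construction you describe has a concrete flaw: the spine endpoint $p_0$ is itself a leaf adjacent to $p_1$, hence a twin of each of the $a$ leaves you attach at $p_1$. The twin set of leaves hanging from $p_1$ therefore has cardinality $a+1$, which by the very twin argument you invoke forces $\beta_p(G_{a,b})\ge a+1$ and makes the $a$-part resolving partition you intend to exhibit impossible. This is repairable (take $p_0$ to be one of the $a$ twins, i.e.\ attach only $a-1$ additional leaves, or start the spine at $p_1$), but as written the graph cannot realize $\beta_p=a$.

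More importantly, the two steps you yourself identify as the heart of the argument are only announced, not performed. No explicit $a$-part partition is written down and no resolving check is made; note that the ``spine coordinate'' idea alone does not suffice, since a leaf hanging at $p_i$ and the spine vertex $p_{i+1}$ are equidistant from any reference part located near the end of the spine, so such pairs must be separated by the choice of which parts they occupy, and this has to be verified gadget by gadget. Likewise the calibration of $\beta$ is left open: with the corrected gadget the tree formula gives $\beta=a+t$, where $t$ is the number of attached twin pairs, so $t=b-a$ covers all $b\ge a$, but you never solve for the parameters, and the boundary case $b=a$ (no twin pairs at all, a broom) still requires its own $a$-part partition to be exhibited and checked. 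As it stands the proposal is a plausible programme with one erroneous detail and its two essential verifications missing, so it does not yet constitute a proof.
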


The remaining part of this paper
is organized as follows.
In Section \ref{sec2}, we introduce the dominating partition number $\eta_p(G)$ and show some
basic properties for this new parameter.
Finally, Section \ref{sec3} is devoted to the characterization of all graphs $G$ satisfying any of the following conditions:  $\eta_p(G)= n$, $\eta_p(G)= n-1$, $\eta_p(G)= n-2$ and $\beta_p(G) = n-2$ and to show some tight  Nordhaus-Gaddum bounds for both the  partition dimension $\beta_p(G)$ and the dominating partition dimension $\eta_p(G)$.

\section{Dominating partition dimension}\label{sec2}

A set $D$ of vertices of a graph $G$ is a \emph{dominating set} if $d(v,D)=1$, for every vertex $v\in V(G)\setminus D$.
The \emph{domination number} $\gamma(G)$ is the minimum cardinality of a dominating set.

A set $S\subseteq V(G)$ is a \emph{resolving dominating set}, if it is both resolving and dominating.
The \emph{resolving domination number $\eta(G)$} of $G$ is the minimum cardinality of a resolving dominating set of $G$.
Resolving dominating sets were introduced in  \cite{bcdz03}, and also independently in \cite{heoe04} (in this last work they were called \emph{metric-locating-dominating sets}),  being  further studied in \cite{chmpp13,gohemo17,hhh06,hmp14,mh09,sbca15}.

As a straightforward consequence of these definitions, it holds that (see \cite{chmpp13}):
$$\max\{\gamma(G), \beta(G)\} \leq \eta(G) \leq \gamma(G)+ \beta(G).$$

A partition $\Pi=\{S_1,\ldots,S_k\}$ of $V(G)$ is called \emph{dominating} if for every $v \in V(G)$,   $d(v,S_j)=1$  for some $j \in \{1,\ldots, k\}$.
The \emph{partition domination number} $\gamma_p(G)$ equals the minimum cardinality  of a dominating partition in $G$.

\newpage
\begin{prop}\label{z1}
For any non-trivial graph $G$, $\gamma_p(G)=2$.
\end{prop}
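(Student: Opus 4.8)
The plan is to prove the two bounds $\gamma_p(G)\ge 2$ and $\gamma_p(G)\le 2$ separately. For the lower bound I would observe that the only partition of cardinality $1$ is $\Pi=\{V(G)\}$, and for every vertex $v$ we have $d(v,V(G))=0\ne 1$ because $v$ itself lies in the unique part. Hence a one-part partition can never be dominating, and $\gamma_p(G)\ge 2$.

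For the upper bound, the key reformulation is that in a two-part partition $\{S_1,S_2\}$ the domination requirement for a vertex $v$ can never be met by the part containing $v$, since $d(v,S_i)=0$ whenever $v\in S_i$. Therefore $\{S_1,S_2\}$ is a dominating partition precisely when every vertex has at least one neighbor in the other part; equivalently, when every vertex is incident to a crossing edge of the bipartition. So it suffices to produce such a bipartition of $V(G)$. To this end I would take a spanning tree $T$ of $G$, which exists because $G$ is connected, and use its proper $2$-coloring $(A,B)$, available since every tree is bipartite. As $G$ is non-trivial and connected, $T$ has at least one edge, so both $A$ and $B$ are non-empty; and since $T$ is a connected graph on at least two vertices it has no isolated vertex, so each vertex has a neighbor of the opposite color along an edge of $T\subseteq G$. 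Consequently every vertex has a neighbor in the other part, so $\{A,B\}$ is a dominating partition of cardinality $2$ and $\gamma_p(G)\le 2$. Combining the two bounds gives $\gamma_p(G)=2$.

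The argument is essentially routine, and I do not expect a genuine obstacle. The one point that requires care is the reformulation of the partition-domination condition: recognizing that the distance from a vertex to its own part equals $0$, so the witnessing part must be a different one, and hence the condition amounts to the purely combinatorial statement that every vertex has a neighbor across the cut. Once this is observed, the spanning-tree bipartition completes the proof immediately.
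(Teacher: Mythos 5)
Your proof is correct, but it takes a different route from the paper. The paper's proof takes a minimum dominating set $S$ and declares $\{S, V(G)\setminus S\}$ to be a dominating partition; this is fine, but it silently relies on the classical observation of Ore that, in a graph without isolated vertices, every vertex of a minimal dominating set has a neighbor outside it (otherwise the vertices of $S$ itself need not be dominated by $V(G)\setminus S$). Your construction via the bipartition $(A,B)$ of a spanning tree sidesteps that subtlety entirely: every vertex has a tree-neighbor of the opposite color, so domination of both parts is immediate, and the non-emptiness of both parts comes for free from the tree having an edge. You also make the lower bound $\gamma_p(G)\ge 2$ explicit (the single-part partition fails because $d(v,V(G))=0$), which the paper compresses into the phrase ``since $G$ is non-trivial.'' In short, your argument is marginally longer but more self-contained; the paper's is shorter but leans on an unstated classical fact about minimal dominating sets.
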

\begin{proof}
Let $S$ be a dominating set of cardinality $\gamma(G)$.
Observe that the partition $\Pi=\{S, V(G)\setminus S\}$  is a dominating partition of $G$.
Hence, $\gamma_p(G)= 2$, since $G$ is non-trivial.
\end{proof}

Let $\Pi=\{S_1,\ldots,S_k\}$ be a partition of the vertex set of a non-trivial graph $G$.
The partition $\Pi$ is called a  \emph{resolving dominating partition} of $G$, \emph{RD-partition} for short,   if it is both resolving and dominating.
The \emph{dominating partition dimension} $\eta_p(G)$ of $G$  is the minimum  cardinality of an RD-partition of $G$.
An RD-partition of cardinality $\eta_p(G)$ is called an \emph{$\eta_p(G)$-partition} of $G$. 

\begin{prop}\label{p2}
If $G$ is  a non-trivial graph, then $\eta_p(G)=2$ if and only if $G$ is isomorphic  to  $K_2$.
\end{prop}
\begin{proof}
Certainly, $\eta_p(K_2)=2$.
Conversely, let $G$ be a graph such that  $\eta_p(G)=2$.
Take an $\eta_p(G)$-partition $\Pi=\{S_1,S_2\}$.
Suppose that for some $i\in\{1,2\}$, $|S_i|\geq 2$.
Assume w.l.o.g. that $i=1$ and take  $u,v \in S_1$.
As $\Pi$ is a dominating partition, $r(u|\Pi)=(0,1)=r(v|\Pi)$, contradicting that $\Pi$ is a resolving partition.
So, $|S_1|=|S_2|=1$  and thus  $G\cong K_2$.
\end{proof}

\begin{prop}
	Let $P_n$ and $C_n$ denote the path and the cycle of order $n$, respectively.
	If $n\geq 3$, then $\eta_p(P_n)=\eta_p(C_n)=3$.
\end{prop}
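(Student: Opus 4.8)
The plan is to prove both equalities by getting the bound $\eta_p\ge 3$ for free and then exhibiting, in each case, an explicit resolving dominating partition with exactly three parts. For the lower bound, observe that for $n\ge 3$ neither $P_n$ nor $C_n$ is isomorphic to $K_2$; since a single part cannot resolve two distinct vertices we always have $\eta_p\ge 2$, and Proposition \ref{p2} rules out equality, so $\eta_p(P_n)\ge 3$ and $\eta_p(C_n)\ge 3$. It therefore suffices to produce an RD-partition of cardinality $3$ in each case.

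For the path, label $V(P_n)=\{v_1,\dots,v_n\}$ with $v_iv_{i+1}\in E$, take $S_1=\{v_1\}$, and split $\{v_2,\dots,v_n\}$ into $S_2$ and $S_3$ according to the parity of the index, so that consecutive vertices alternate between $S_2$ and $S_3$. Since $d(v_i,S_1)=i-1$, the first coordinates $0,1,\dots,n-1$ are pairwise distinct and $\Pi=\{S_1,S_2,S_3\}$ is resolving regardless of how $S_2,S_3$ are chosen. For domination I only need every vertex to have a neighbour in a different part: $v_1$ is dominated through $v_2$, $v_2$ through $v_1\in S_1$, and every $v_i$ with $i\ge 3$ has both neighbours $v_{i-1},v_{i+1}$ in the part opposite its own by the alternation. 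Hence $\Pi$ is an RD-partition and $\eta_p(P_n)=3$.

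The cycle is the delicate case. Label $V(C_n)=\{v_0,\dots,v_{n-1}\}$ cyclically and set $S_1=\{v_0\}$, so that $d(v_i,S_1)=\min\{i,n-i\}$. This value already resolves every pair except the reflected pairs $\{v_i,v_{n-i}\}$; when $n$ is even, $v_{n/2}$ is the unique vertex at distance $n/2$ and so is resolved on its own. The idea is then to two-colour $v_1,\dots,v_{n-1}$ with $S_2,S_3$ so that (i) the colouring is \emph{anti-symmetric}, i.e. $v_i$ and $v_{n-i}$ receive different parts, whence one lies in $S_2$ and the other does not and that coordinate resolves the pair; and (ii) no three consecutive vertices lie in a common part, which is exactly the requirement that every vertex have a neighbour in another part, giving domination (the vertices $v_1,v_{n-1}$ being dominated through $v_0$). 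When $n$ is odd, colouring by parity of the index satisfies both conditions at once, since $i$ and $n-i$ then have opposite parities.

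The main obstacle is the even case, where the parity colouring is symmetric under $i\mapsto n-i$ and hence fails to resolve the reflected pairs; in fact any reflection-symmetric colouring fails. To circumvent this I would colour $v_1,\dots,v_{n/2-1}$ alternately, force $c(v_i)$ to be the opposite of $c(v_{n-i})$ on the second half, and colour the self-paired vertex $v_{n/2}$ arbitrarily. Anti-symmetry then holds by construction for all $i\ne n/2$, and since each half is strictly alternating while the two neighbours $v_{n/2-1},v_{n/2+1}$ of the middle vertex receive opposite parts, no run of length three can occur at the junction or elsewhere. Confirming the two structural facts — anti-symmetry resolves every reflected pair, and runs of length at most two force domination — together with checking that all three parts are non-empty for $n\ge 3$, completes the construction and yields $\eta_p(C_n)=3$.
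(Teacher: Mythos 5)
Your argument is correct and follows essentially the same route as the paper: the lower bound via Proposition~4 (the characterization of $\eta_p(G)=2$), followed by an explicit three-part RD-partition built from a small anchor part plus an alternating two-colouring of the remaining vertices. The only real difference is in the even-cycle case, where the paper breaks the reflection symmetry by enlarging the anchor to two adjacent vertices $\{1,2\}$, whereas you keep a singleton anchor and instead make the two-colouring anti-symmetric under the reflection $i\mapsto n-i$; both devices achieve the same end and both work.
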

\begin{proof}
	According to Proposition \ref{p2}, it is sufficient to show, in both cases, the existence of an RD-partition of cardinality $3$.
	Assume that $V=V(P_n)=V(C_n)=\{1,\ldots,n\}$;  $E(P_n)=\{ \{ i,i+1\} : 1\le i< n \}$ and $E(C_n)=E(P_n)\cup \{ \{ 1,n \} \}$. Consider the following sets of vertices:
	
	\begin{center}
		$S_1=\{1\}$,
		$S'_1=\{1,2\}$,
		$S_2=\{i \in V  :  i \hbox{ even} \}$,
		$S'_2=\{i\in V    :  i\not= 2, \hbox{ even}\}$,
		$S_3=\{i \in V     :  i\not= 1, \hbox{ odd}\}$.
	\end{center}
	
	It is  straightforward to check that $\Pi=\{S_1,S_2,S_3\}$ is an RD-partition of $P_n$, and also of  $C_n$ if $n$ is odd, and that $\Pi'=\{S'_1,S'_2,S_3\}$ is an RD-partition of $C_n$, if $n$ is even.
\end{proof}

Next, we show some results relating the dominating partition dimension $\eta_p$ to other parameters such as the resolving domination number  $\eta$,  the partition dimension $\beta_p$, the order and the diameter.

\begin{prop} For any  graph $G$ of order $n\ge2$, $\eta_p(G) \le \eta(G)+1$.
\end{prop}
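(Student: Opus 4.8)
The plan is to imitate the proof of the inequality $\beta_p(G)\le\beta(G)+1$ from Theorem~\ref{mdpd}(1), now incorporating the domination requirement. Let $S=\{x_1,\ldots,x_\eta\}$ be a resolving dominating set of $G$ of minimum cardinality $\eta=\eta(G)$, and form the partition
$$\Pi=\{\{x_1\},\{x_2\},\ldots,\{x_\eta\},V(G)\setminus S\},$$
whose cardinality is at most $\eta(G)+1$ (exactly $\eta(G)+1$ when $S\neq V(G)$, and $\eta(G)$ when $S=V(G)$, in which case the last part is empty and is simply discarded). I would then show that $\Pi$ is an RD-partition, which immediately yields $\eta_p(G)\le\eta(G)+1$.

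First I would check that $\Pi$ is resolving. For distinct vertices $u,v\in V(G)$, since $S$ is a resolving set there exists $x_i\in S$ with $d(u,x_i)\neq d(v,x_i)$; because $\{x_i\}$ is a part of $\Pi$, we have $d(u,\{x_i\})=d(u,x_i)\neq d(v,x_i)=d(v,\{x_i\})$, so the part $\{x_i\}$ resolves the pair $u,v$. Hence every pair of vertices is resolved by some part of $\Pi$.

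Next I would verify that $\Pi$ is dominating. For a vertex $v\in V(G)\setminus S$, the fact that $S$ is a dominating set gives $d(v,S)=1$, so $v$ is adjacent to some $x_i$ and therefore $d(v,\{x_i\})=1$. The delicate case, and the only real subtlety of the argument, is a vertex $x_i\in S$ itself: here $d(x_i,\{x_i\})=0$ rather than $1$, so its domination must be witnessed by a \emph{different} part. Since $G$ is connected and non-trivial, $x_i$ has a neighbour $w$, which belongs to some part distinct from $\{x_i\}$ (either another singleton $\{x_j\}$ or the part $V(G)\setminus S$); this gives distance exactly $1$ from $x_i$ to that part. Thus every vertex of $G$ is dominated by some part of $\Pi$, which completes the verification that $\Pi$ is an RD-partition and establishes the claimed bound.
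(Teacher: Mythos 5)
Your construction is exactly the one the paper uses: take a minimum resolving dominating set $S$ and form the partition of singletons of $S$ together with $V(G)\setminus S$. The paper states this without the detailed verification (noting only that $\eta(G)\le n-1$ so the last part is nonempty), so your proof is correct and takes essentially the same approach, just spelled out more fully.
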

\begin{proof}
Suppose that $\eta(G)=k$.
Notice that $k\le n-1$, since $n\ge2$.
Let $S=\{u_1,\ldots,u_k\}$ be  a resolving dominating set of $G$.
Then, $\Pi=\{\{u_1\},\ldots,\{u_k\},V(G)\setminus S\}$ is an RD-partition of $G$.
\end{proof}

\begin{lemma}\label{hojcomp}
	Let  $G$ be a graph of order $n\ge3$.
	Let $W\subsetneq V(G)$  be a twin set of cardinality $k\ge2$.
	\begin{enumerate}
		\item[\rm(1)] If $W$ induces an empty graph, then $\eta_p(G)\ge \beta_p(G)\ge k$.
		\item[\rm(2)] If $W$ induces a complete graph, then $\eta_p(G)\ge \beta_p(G)\ge k+1$.
		\item[\rm(3)] If $W$ is a set of leaves, then $\eta_p(G)\ge k+1$.
	\end{enumerate}
\end{lemma}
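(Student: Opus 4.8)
The plan rests on a single structural fact, which I would isolate as the engine of all three parts: \emph{in any resolving partition of $G$, two twins must lie in different parts}. To see this, let $u\neq v$ be twins and let $\Pi$ be a resolving partition. For any part $S$ with $u,v\notin S$, every $w\in S$ satisfies $w\neq u,v$, so $d(u,w)=d(v,w)$ and hence $d(u,S)=d(v,S)$; thus $S$ does not resolve $u,v$. If instead $u,v$ lie in the same part $S$, then $d(u,S)=d(v,S)=0$, so again $S$ fails, and by the previous remark no other part resolves them either. Therefore $u$ and $v$ cannot share a part. Applying this to the twin set $W$ shows that the $k$ vertices of $W$ occupy $k$ distinct parts of any resolving partition, giving $\beta_p(G)\ge k$ at once. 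Since every RD-partition is in particular a resolving partition, $\eta_p(G)\ge\beta_p(G)$ always holds, so part~(1) is immediate.

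For part~(2) I would start from $\beta_p(G)\ge k$ and rule out equality. Suppose, for contradiction, that $\beta_p(G)=k$ with a resolving partition $\Pi=\{S_1,\dots,S_k\}$; by the twin fact each part contains exactly one vertex of $W$, say $w_i\in S_i$. Because $W$ induces a complete graph of true twins, each $w_i$ is adjacent to every $w_j$ with $j\neq i$, so $d(w_i,S_i)=0$ and $d(w_i,S_j)=1$ for $j\neq i$; thus $r(w_i\mid\Pi)$ is the vector with a $0$ in coordinate $i$ and $1$ elsewhere. The crux is to produce a vertex outside $W$ with the same profile as some $w_i$. Here I use connectivity: since $G$ is connected and $W\subsetneq V(G)$, the clique $W$ cannot be a whole connected component, so some $x\notin W$ is adjacent to a vertex of $W$; as the $w_j$ are true twins, $x$ is then adjacent to all of them. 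Hence $d(x,S_j)=1$ whenever $x\notin S_j$, and if $x\in S_\ell$ we get $r(x\mid\Pi)=r(w_\ell\mid\Pi)$ with $x\neq w_\ell$, contradicting that $\Pi$ resolves $G$. Therefore $\beta_p(G)\ge k+1$, and again $\eta_p(G)\ge\beta_p(G)\ge k+1$.

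For part~(3) I would first pin down the geometry of a twin set of leaves. Two leaves cannot be adjacent twins without forming an isolated $K_2$, which is excluded by $n\ge3$ and connectivity, so any two of them satisfy $N(w_i)=N(w_j)$; hence all vertices of $W$ share a single neighbour $s$ and $W$ induces an empty graph. Now suppose $\eta_p(G)=k$ and take an RD-partition $\Pi=\{S_1,\dots,S_k\}$. By the twin fact the $k$ leaves lie in $k$ distinct parts, so each part contains exactly one leaf. In particular the part $S_\ell$ containing the support vertex $s$ also contains some leaf $w$. Since the only neighbour of $w$ is $s\in S_\ell$, we have $d(w,S_\ell)=0$ while $d(w,S_j)=1+d(s,S_j)\ge 2$ for every $j\neq\ell$; thus no part is at distance exactly $1$ from $w$, contradicting that $\Pi$ is dominating. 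This forces $\eta_p(G)\ge k+1$.

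I expect the main obstacle to be the equality-exclusion arguments in~(2) and~(3), rather than the lower bound $\beta_p(G)\ge k$ itself. In~(2) the delicate point is that an arbitrary vertex outside $W$ need not touch the clique, so one must invoke connectivity to guarantee the existence of the external neighbour that collides with a $w_i$. In~(3) the key insight is the forced co-location of $s$ with a leaf: it is precisely the extra vertex $s$ crowding into one of the $k$ parts that destroys domination for the leaf sharing that part. Both steps are short once the twin fact and the distance-to-part computations are in place.
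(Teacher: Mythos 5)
Your proposal is correct and follows essentially the same route as the paper: twins must occupy distinct parts of any resolving partition (giving part (1)), connectivity forces an external vertex adjacent to all of the clique $W$ to collide with the twin sharing its part (part (2)), and the support vertex of the leaves must share a part with one leaf, which then has no part at distance $1$ (part (3)). The only differences are presentational, e.g.\ you spell out the twin-separation fact and the reason the leaves are false twins in slightly more detail.
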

\begin{proof}
	\begin{enumerate}
		
\item[(1)]
	Let $W$ be a twin set of cardinality $k$.
	Since $d(w_1,v)=d(w_2,v)$ for every $w_1,w_2\in W$ and for every $v\in V(G)\setminus\{w_1,w_2\}$,
	we have that different vertices of $W$ must belong to different parts of any resolving partition.
	Hence, $\eta_p(G)\ge \beta_p(G)\ge k$.
	
	Observe that,
	if $\beta_p(G) = k$, then every part of a resolving partition of cardinality $k$ contains exactly one vertex of $W$.
	
\item[(2)] Suppose that $W$ induces a complete graph and $W\subsetneq V(G)$.
	Since $G$ is connected, there exists a vertex $v$ adjacent to all the vertices of $W$. If $\beta_p(G) = k$ and $\Pi$ is a resolving partition of cardinality $k$, then there is some vertex $w\in W$ such that $v$ and $w$ belong to the same part $S$ of $\Pi$.
	Then, $v$ and $w$ are at distance $1$ from any part of $\Pi$ different from $S$, implying that $r(v|\Pi)=r(w|\Pi)$, a contradiction.
	Therefore, $\beta_p(G) \ge  k+1$.
	
\item[(3)] Assume that $W$ is a twin set of leaves hanging from a vertex $u$.
	Suppose that $\eta_p(G) = k$ and $\Pi$ is an RD-partition of cardinality $k$.
	Then, $\Pi$ is also a resolving partition of cardinality $k$.
	Hence, there is some vertex $w\in W$ such that $u$ and $w$ belong to the same part $S$ of $\Pi$.  But in such a case, $\Pi$ is not a dominating partition, because $w$ is a leaf hanging from $u$. Therefore, $\eta_p(G)\ge k+1$.
	\end{enumerate}
\vspace{-.5cm}\end{proof}

\vspace{.2cm}
\begin{prop}
Given a pair of integers $a,b$ such that $3 \le a \le b+1$, there exists a graph $G$ with $\eta_p(G)=a$ and $\eta(G)=b$.
\end{prop}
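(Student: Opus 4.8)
The plan is to realize every admissible pair by a single explicit family of \emph{brooms}. For $3\le a\le b+1$, let $G=G_{a,b}$ be obtained from a path $v_1v_2\cdots v_m$ by attaching $a-1$ leaves $\ell_1,\dots,\ell_{a-1}$ to the endpoint $v_1$, where $m=1$ when $b=a-1$ (so that $G$ is the star $K_{1,a-1}$) and $m=3(b-a)+5$ when $b\ge a$. The $a-1$ leaves form a twin set, while the pendant path carries a tunable domination load; the whole point is that the first gadget pins down $\eta_p$ and the second inflates $\eta$ up to the prescribed value without affecting $\eta_p$. I must avoid $m=2$, since then $v_2$ would itself become a leaf of $v_1$ and the twin set would jump to size $a$; this is exactly why the case $b=a-1$ is treated separately by the bare star.

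First I would compute $\eta_p(G)=a$. The lower bound $\eta_p(G)\ge a$ is immediate from Lemma~\ref{hojcomp}(3), since $\{\ell_1,\dots,\ell_{a-1}\}$ is a twin set of leaves of size $a-1$. For the matching upper bound I would exhibit an explicit RD-partition $\Pi=\{S_1,\dots,S_a\}$: place $\ell_i$ in $S_i$ for $1\le i\le a-1$, put $v_1$ in $S_a$, and color the remaining path vertices $v_2,\dots,v_m$ periodically among three of the parts so that consecutive path vertices lie in distinct parts. Domination is then automatic (every vertex has a neighbor in another part), and resolving follows because distances along the path separate the path vertices from one another and from the leaves, whose distance vectors are governed by the part of $v_1$. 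Since $a\ge 3$, three parts suffice for the path, so no extra part is needed and $\eta_p(G)=a$.

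The harder step is $\eta(G)=b$. For the upper bound I would take $S=\{\ell_1,\dots,\ell_{a-2}\}\cup\{v_1\}\cup D$, where $D\subseteq\{v_2,\dots,v_m\}$ is a minimum set dominating $v_3,\dots,v_m$; then $|S|=(a-2)+1+\lceil (m-2)/3\rceil=b$, and one checks directly that $S$ dominates and resolves $G$. The lower bound $\eta(G)\ge b$ is where the main obstacle lies, because $\gamma(G)$ alone is only $b-a+2<b$, so domination by itself is not enough: the value $b$ arises from adding a \emph{resolving} cost to a \emph{domination} cost, and the key is to show these costs fall on disjoint vertices and hence add. Concretely, any resolving dominating set $S$ must contain at least $a-2$ of the twin leaves; if it contains exactly $a-2$, the remaining leaf is dominated only by $v_1$, forcing $v_1\in S$; and dominating the subpath $v_3,\dots,v_m$ requires at least $\lceil (m-2)/3\rceil$ further vertices of $\{v_2,\dots,v_m\}$. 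The crux will be verifying that these three contributions are pairwise disjoint---leaves, the center $v_1$, and path dominators lying beyond $v_2$---and that the path-domination bound cannot be beaten using the leaves or $v_1$, which reach no farther than $v_2$; the case where $S$ contains all $a-1$ leaves must be handled separately and only increases the count. Summing the disjoint contributions gives $\eta(G)\ge (a-2)+1+\lceil (m-2)/3\rceil=b$, matching the upper bound. I expect the bookkeeping in this disjointness argument, together with the small-$m$ exceptions, to be the only real difficulty.
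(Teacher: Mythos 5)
Your proposal is correct: the lower bound $\eta_p(G)\ge a$ via Lemma~\ref{hojcomp}(3) applied to the twin set of leaves, the explicit RD-partition for the upper bound, and the forced-vertex count for $\eta(G)\ge b$ all go through, and the formula $m=3(b-a)+5$ makes the arithmetic close (with the star handling $b=a-1$). The overall architecture is the same as the paper's -- a tree with a bundle of twin leaves at one vertex pinning $\eta_p$, plus a spine whose length tunes $\eta$ -- but your witness graph is genuinely different. The paper uses a caterpillar in which \emph{every} spine vertex $v_i$ carries a pendant leaf $u_i$ (with the extra leaves $w_1,\dots,w_h$ also at $v_1$), so the lower bound $\eta(G)\ge h+k$ falls out of the observation that each pendant pair $\{u_i,v_i\}$ must contribute a vertex to any dominating set; these pairs are pairwise disjoint and essentially disjoint from the twin set. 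Your broom replaces this with a bare path, so the corresponding count becomes a path-domination argument: $\lceil(m-2)/3\rceil$ vertices of $\{v_2,\dots,v_m\}$ are needed to dominate $v_3,\dots,v_m$, and these cannot be supplied by $v_1$ or the leaves. The paper's gadget buys a one-line disjointness count at the cost of a larger leaf count ($n=2k+h$ with a leaf per spine vertex); yours buys a smaller, arguably more natural graph at the cost of the $\lceil\cdot/3\rceil$ bookkeeping and the small-$m$ exception you correctly flag. Both the resolving check for your $\eta$-upper-bound set (one needs some $v_j\in D$ to separate $\ell_{a-1}$ from $v_2$, which holds since $D\ne\emptyset$) and the case split on whether $S$ contains $a-2$ or all $a-1$ twin leaves work out as you describe, so the remaining verifications are routine.
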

\begin{proof}
Let $h=a-2$ and $k=b-a+2$.
Take the caterpillar $G$  of order $n=2k+h$ displayed in Figure \ref{realiz}.
The set $W=\{w_1,\ldots,w_h,u_1\}$ is a twin set of $h+1$ leaves.
Thus, by Lemma~\ref{hojcomp}, we have $\eta_p(G)\ge h+2$.
Now, take the partition
$\Pi=\{\{u_1,\ldots,u_k\},\{v_1,\ldots,v_k\},\{w_1\},\ldots,\{w_h\}\}$.
Clearly, $\Pi$ is both a dominating and a resolving partition.
Hence, $\eta_p(G)= h+2=a$.

To prove that $\eta(G)=b$, note  first that every resolving dominating set $S$ must contain all vertices from the twin set $W$ except at most one.
Observe also that for every $i\in\{1,\dots,k\}$, either $u_i$ or $v_i$ must belong to $S$.
Thus, $\eta(G)\ge h+k=b$.
Now, take the set $S=\{w_1,\ldots,w_h,u_1,\ldots,u_k\}$.
Clearly, $S$ is both dominating and resolving.
Hence, $\eta(G) = h+k=b$.
\end{proof}

\vspace{.4cm}
\begin{figure}[ht]
\begin{center}
\includegraphics[width=0.4\textwidth]{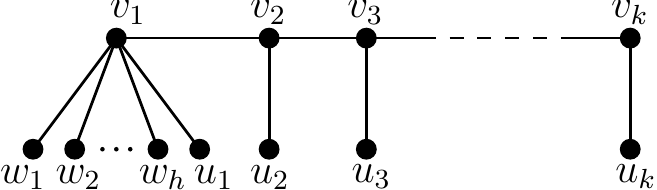}
\caption{Caterpillar $G$  of order $n=2k+h$, $\eta_p(G)=h+2$ and $\eta(G)=h+k$.}
\label{realiz}
\end{center}
\end{figure}

Next, a remarkable double inequality relating both the partition dimension and the dominating partition dimension is shown.

\vspace{.4cm}
\begin{theorem}\label{etabeta}
For any graph  $G$ of order $n\ge 3$, $\beta_p(G) \le \eta_p(G)\le \beta_p(G) +1$.
\end{theorem}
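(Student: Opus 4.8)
The inequality $\beta_p(G)\le\eta_p(G)$ is immediate, since every RD-partition is in particular a resolving partition, so the minimum cardinality over RD-partitions cannot be smaller than the minimum over all resolving partitions. The content of the statement is the upper bound $\eta_p(G)\le\beta_p(G)+1$, and my plan is to start from a resolving partition $\Pi=\{S_1,\dots,S_k\}$ with $k=\beta_p(G)$ and produce an RD-partition using at most one extra part. If $\Pi$ happens to be dominating we are done, so I will assume the set $B$ of non-dominated vertices is nonempty; note that $v\in B$ precisely when $N[v]$ is entirely contained in the part of $\Pi$ that contains $v$. As context for why economy is the issue: refinements of resolving partitions stay resolving and refinements of dominating partitions stay dominating, so the common refinement of $\Pi$ with the two-part dominating partition from Proposition \ref{z1} is automatically an RD-partition — but it may use up to $2k$ parts. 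The whole difficulty is to achieve domination while adding only one part.

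For the construction I would fix a minimal dominating set $D$ of $G$ and set $\Pi'=\{S_1\setminus D,\dots,S_k\setminus D,\,D\}$, discarding empty parts; this manifestly has at most $k+1$ parts. Checking that $\Pi'$ is dominating is the clean step. Every vertex $v\notin D$ has a neighbour in $D$ because $D$ dominates $G$, and since $D$ is a part of $\Pi'$ this gives $d(v,D)=1$. For $v\in D$ I would invoke minimality of $D$: each vertex of a minimal dominating set has a private neighbour, and because $G$ is connected with $n\ge 3$ this forces $v$ to have a neighbour outside $D$, hence in some part $S_i\setminus D$, so that $d(v,S_i\setminus D)=1$. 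Thus every vertex of $G$ lies at distance $1$ from some part of $\Pi'$.

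The main obstacle is showing that $\Pi'$ is still resolving. Lumping all of $D$ into one new part only records the aggregate distance $d(\cdot,D)=\min_i d(\cdot,S_i\cap D)$, whereas $\Pi$ may have separated a pair $u,v$ through a single vertex of some $S_i\cap D$, i.e. through finer information that the merged part $D$ no longer sees. Concretely, if $u,v$ were unresolved by $\Pi'$ then $d(u,S_i\setminus D)=d(v,S_i\setminus D)$ for all $i$ and $d(u,D)=d(v,D)$, and I must deduce $d(u,S_i)=d(v,S_i)$ for every $i$ to contradict that $\Pi$ resolves $\{u,v\}$. Since $d(u,S_i)=\min\{d(u,S_i\setminus D),\,d(u,S_i\cap D)\}$, the only gap is the possibility that $d(u,S_i\cap D)\neq d(v,S_i\cap D)$ while the minima $\min_i d(\cdot,S_i\cap D)$ coincide. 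I expect this to be the crux, and I would close it by choosing $D$ and $\Pi$ compatibly — for instance so that each part of $\Pi$ retains inside $S_i\setminus D$ a vertex realizing its relevant distances, or by arguing that whenever a pair is resolved only through a moved vertex, the new coordinate $d(\cdot,D)$ together with the surviving parts restores the separation. Controlling these ``moved separators'' is where the real work lies; the domination and the part-count bound are routine by comparison.
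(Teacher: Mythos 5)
Your lower bound and your count of parts are fine, and you have correctly located the difficulty, but the proof has a genuine gap exactly where you admit ``the real work lies'': nothing in the argument controls what happens to the distances $d(\cdot,S_i)$ once the vertices of $S_i\cap D$ are pulled out into the new part, and for an arbitrary minimal dominating set $D$ the construction really does fail. Take $G=P_7$ with vertices $1,\dots,7$, the minimum resolving partition $\Pi=\{\{1\},\{2,\dots,7\}\}$, and the minimal (indeed minimum) dominating set $D=\{1,4,7\}$. Then $\Pi'=\{\{2,3,5,6\},\{1,4,7\}\}$ and every vertex of $\{2,3,5,6\}$ gets the vector $(0,1)$, so $\Pi'$ is not resolving: the singleton part $\{1\}$ that did all the resolving has been swallowed by $D$. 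Your two proposed repairs --- choosing $D$ ``compatibly'' with $\Pi$, or arguing that the new coordinate $d(\cdot,D)$ restores the lost separations --- are precisely the statements that would need proof, and neither is supplied.

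The paper's proof avoids this obstacle by never moving a vertex that could realize a distance from outside its part. It relocates only vertices from the set $W=\{u: N[u]\subseteq S_i \text{ for some } i\}$ of non-dominated vertices: if $w\in W\cap S_j$ and $u\notin S_j$, then the neighbour of $w$ on a shortest $(u,w)$-path also lies in $S_j$ and is strictly closer to $u$, so $w$ never realizes $d(u,S_j)$ and deleting it from $S_j$ changes no distance $d(u,S_j)$ with $u\notin S_j$. That one observation makes the resolving property carry over essentially for free. Domination is then arranged not by making the new part a dominating set of $G$, but by taking the moved set $W'\subseteq W$ to be one colour class of a $2$-colouring of a spanning tree of each component of $G[W]$, so that each moved vertex keeps a neighbour in its old part and each unmoved vertex of $W$ gains a neighbour in $W'$; a short extra argument ($d(v,S_i')\ge 2$ for $v\in W'$ outside $S_i$) separates moved vertices coming from different parts. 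To salvage your version you would need a selection lemma producing a minimal dominating set that avoids all distance-realizing vertices of every part, which is essentially harder than the theorem itself.
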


\begin{proof}
The first inequality follows directly  from the definition of RD-partition.
Let $\beta_p(G)=r$ and let $\Pi=\{ S_1,\dots ,S_r\}$  be a resolving partition of $G$.
If $\Pi$ is a dominating partition, then $\eta_p(G)=\beta_p(G)$.
Suppose that $\Pi$ is not a dominating partition.
Let $W=\{ u\in V(G): N[u]\subseteq S_i \textrm{ for some }i\in \{1,\dots ,r\} \}$.
Note that $W\neq \emptyset$, since $\Pi$ is not dominating,  and that $S_i\setminus W\neq \emptyset$ for every $i\in \{ 1,\dots ,r \}$, since $G$ is connected.
In order to show that $\eta_p(G)\le \beta_p(G) +1$, we construct  an RD-partition of cardinality $r+1$.

Let $C_1, \dots , C_s$ be  the connected components  of the subgraph $G[W]$ induced by $W$.
Clearly, for every $i\in\{1,\ldots,s\}$,  all vertices of  $C_i$ belong to the same part of $\Pi$.
Next, we define a subset $W'\subseteq W$ as follows.
If $|V(C_i)|=1$, then add to $W'$ the unique vertex of $C_i$.
If $|V(C_i)|\ge 2$, then consider a 2-coloring of a spanning tree of $C_i$, choose one color and add to $W'$ all  vertices having this color.
Note that, if $V(C_k)\subseteq S_{i_k}$ and a pair of vertices $x,y\in C_k$ are adjacent, then one endpoint of $xy$ is in $W'\cup S_{i_k}$ and the other one belongs to $S_{i_k}\setminus W'$.
Let $\Pi '=\{ S_1',\dots ,S_r', W'\}$, where $S_i'=S_i\setminus W'\subseteq S_i$ for every $1\le i\le r$.
We claim that $\Pi '$ is an RD-partition.

On the one hand, observe that the sets $S_1',\dots ,S_r', W'$ are nonempty by construction.
On the other hand, notice that for every $u\in S_i$, $d(u,S_j)=d(u,w)$ for some vertex $w\in S_j\setminus W$ whenever $i\not= j$.
Indeed, assume to the contrary that $d(u,S_j)=d(u,w)$ and $w\in S_j\cap W$.
Since $w\in W$,  we have $N[w]\subseteq S_j$.
Thus, the vertex $w'$ adjacent to $w$ in a shortest $(u,w)$-path is also in $S_j$, implying that $d(u,S_j) \le d(u,w')<d(u,w)=d(u,S_j)$, a contradiction.
From this last observation, we conclude that $d(u,S_j)=d(u,S_j')$ if $u\in S_i$ and $j\not= i$.

Next, we show that $\Pi '$ is a dominating partition, i.e., that for any $u\in V(G)$,  the vector $r(u|\Pi')$ has at least one component equal to 1.
We distinguish  two cases.

\vspace{.3cm}
\noindent \textbf{Case 1}: $u\in  W'$.
Assume that $u\in S_i$, for some $i\in \{ 1,\dots ,r\}$.
If $u$ belongs to a trivial connected component of $G[W]$, then every neighbor of $u$ is in $S_i'$.
So, $d(u,S_i')=1$.
If $u$ belongs to a non-trivial connected component $C_k$ of $G[W]$, then any neighbor of $u$ with different color in the spanning tree of $C_k$ considered in the construction of $W'$ belongs to $S_i'$. So, $d(u,S_i')=1$.

\vspace{.3cm}
\noindent \textbf{Case 2}: $u\in S_i'$, for some $i\in \{ 1,\dots ,r\}$.
If $u\notin W$, as  $u\in S_i'\setminus W= S_i\setminus W$, then $u$ has a neighbor $v$ in some $S_j$ with $j\not= i$.
Therefore, $d(u,S_j')=1$ if $v\in S_j'$, and $d(u,W')=1$ if $v\in W'$.
If $u\in W$, then $u$ belongs to a non-trivial connected component of $G[W]$ and, by construction of $W'$,  $u$ has a neighbor in $W'$. Thus, $d(u,W')=1$.

Finally, we show that $\Pi '$ is a resolving partition, i.e., that $r(u|\Pi')\not= r(v|\Pi')$ for every pair of distinct vertices $u,v\in V(G)$ belonging to the same part of $\Pi'$.
We distinguish  two cases.

\vspace{.3cm}
\noindent \textbf{Case 1}:
$u,v\in S_i'$  for some $i\in \{ 1,\dots ,r\}$.
In such a case,  $u,v\in S_i$.
Since $\Pi$ is a resolving partition, $d(u,S_j)\not= d(v,S_j)$ for some $j\not= i$.
Using the observation above, we have that $d(u,S_j')=d(u,S_j)\not= d(v,S_j)=d(v,S_j')$ for some $j \not= i$.
Therefore, $r(u|\Pi')\neq r(v|\Pi')$.

\vspace{.3cm}
\noindent \textbf{Case 2}:  $u,v\in W'$.
If $u,v\in S_i$ for some $i\in \{ 1,\dots ,r\}$, then proceeding as in the previous case, we have  $r(u|\Pi')\neq r(v|\Pi')$.
Suppose thus  that $u\in S_i$ and $v\in S_j$ with $i\not= j$.
Notice that  $d(u,S_i')=1$ and  $N[v]\subseteq S_j$ because $v\in S_j$ and $v\in W'\subseteq W$.
Thus, $d(v,S_i)\ge 2$, and so $d(v,S_i')=d(v,S_i)\ge 2$.
Finally, from $d(u,S_i')\not= d(v,S_i')$ we get that $r(u|\Pi')\neq r(v|\Pi')$.
\vspace{-.01cm}\end{proof}

The following result is a direct consequence of Theorem \ref{mdpd}(2)  and Theorem \ref{etabeta}.

\vspace{.2cm}
\begin{cor}\label{n-d+2}
If $G$  is a graph of order $n\ge3$, then $\eta_p(G) \leq n-{\rm diam}(G)+2$.
Moreover, this bound is sharp, and is attained, among others, by $P_n$ and $K_{1,n-1}$.
\end{cor}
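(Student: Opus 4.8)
The plan is to derive Corollary \ref{n-d+2} directly by chaining the two results it cites, as the statement announces. First I would invoke Theorem \ref{mdpd}(2), which gives $\beta_p(G) \le n - {\rm diam}(G) + 1$ for every graph of order $n \ge 2$; since our hypothesis is $n \ge 3$, this bound applies. Then I would apply the right-hand inequality of Theorem \ref{etabeta}, namely $\eta_p(G) \le \beta_p(G) + 1$, valid for $n \ge 3$. Substituting the first bound into the second yields
\[
\eta_p(G) \le \beta_p(G) + 1 \le \big(n - {\rm diam}(G) + 1\big) + 1 = n - {\rm diam}(G) + 2,
\]
which is exactly the claimed inequality. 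This is the entirety of the bound's proof: no new construction is needed, only the composition of two inequalities whose domains of validity both contain $n \ge 3$.

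The remaining obligation is to establish sharpness, i.e., to exhibit graphs attaining equality. For this I would compute both sides for the two announced families, the path $P_n$ and the star $K_{1,n-1}$. For the path, ${\rm diam}(P_n) = n-1$, so the right-hand side equals $n - (n-1) + 2 = 3$; by the earlier proposition we already know $\eta_p(P_n) = 3$ for $n \ge 3$, so equality holds. For the star $K_{1,n-1}$, the diameter is $2$, so the bound reads $n - 2 + 2 = n$; here I would argue that $\eta_p(K_{1,n-1}) = n$ by observing that the $n-1$ leaves form a twin set of leaves, so Lemma \ref{hojcomp}(3) gives $\eta_p \ge (n-1)+1 = n$, while trivially $\eta_p \le n$ since the partition into singletons is always an RD-partition. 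Thus both families meet the bound with equality, confirming sharpness.

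The only real subtlety, and the step I would be most careful about, is checking the diameter and $\eta_p$ values for the star so that the equality $\eta_p(K_{1,n-1}) = n$ is justified rather than merely asserted; the Lemma \ref{hojcomp}(3) lower bound combined with the trivial upper bound makes this clean, so I would lean on that rather than a direct partition analysis. Everything else is a one-line substitution, so I do not anticipate any genuine obstacle — the corollary is a straightforward corollary in the literal sense, and the bulk of the write-up is simply verifying the two sharpness examples.
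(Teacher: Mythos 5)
Your proposal is correct and follows exactly the route the paper intends: the corollary is obtained by composing Theorem \ref{mdpd}(2) with the upper inequality of Theorem \ref{etabeta}, and the paper simply labels it a ``direct consequence'' without writing out the chain. Your verification of sharpness for $P_n$ (via $\eta_p(P_n)=3$) and for $K_{1,n-1}$ (via Lemma \ref{hojcomp}(3) and the trivial singleton upper bound) is sound and in fact supplies detail the paper leaves implicit.
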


\vspace{.2cm}
\begin{prop}\label{newbound}
If $G$ is a graph of order $n\ge3$ and diameter $d$ such that $\eta_p(G)=k$,
then $n\le k\, (d^{k-1}-(d-1)^{k-1})$.
\end{prop}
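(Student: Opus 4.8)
The goal is a counting bound: if $\eta_p(G)=k$, then $n \le k\,(d^{k-1}-(d-1)^{k-1})$. The natural strategy is to count the number of distinct distance vectors $r(v|\Pi)$ that can occur for an RD-partition $\Pi=\{S_1,\ldots,S_k\}$ of cardinality $k$. Since $\Pi$ is resolving, the map $v\mapsto r(v|\Pi)$ is injective, so $n$ is at most the number of admissible vectors. I would fix such an $\eta_p(G)$-partition and analyze the structure of a vector $r(v|\Pi)=(d(v,S_1),\ldots,d(v,S_k))$.

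First I would record the two constraints each vector must satisfy. The vertex $v$ lies in exactly one part, say $S_i$, so that coordinate equals $0$; since each part is nonempty and $G$ has diameter $d$, every coordinate lies in $\{0,1,\ldots,d\}$. Crucially, because $\Pi$ is a \emph{dominating} partition, at least one coordinate equals $1$. So I would count vectors of length $k$ over $\{0,1,\ldots,d\}$ having exactly one zero entry (marking the home part) and at least one entry equal to $1$.

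The counting then proceeds by choosing the position of the unique $0$ in one of $k$ ways; the remaining $k-1$ coordinates take values in $\{1,\ldots,d\}$, which is $d^{k-1}$ choices, and I must subtract those with no coordinate equal to $1$, namely the $(d-1)^{k-1}$ vectors valued in $\{2,\ldots,d\}$. This yields exactly $k\,(d^{k-1}-(d-1)^{k-1})$ admissible vectors, giving the stated inequality. I would need only to verify the $0$-coordinate is genuinely unique: if a vector had two zero entries, $v$ would be distance $0$ from two parts, impossible since $v$ belongs to a single part and parts are disjoint.

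The only subtle point, and the place I would be most careful, is whether every constraint has been faithfully captured rather than overlooked. The bound counts a superset of the actual vectors, so it is valid as an upper estimate regardless of finer feasibility conditions, which is exactly what is needed. The main obstacle is therefore not a hard argument but ensuring the "exactly one zero, at least one one" characterization is airtight; once that is in place, the inclusion--exclusion count is immediate and the result follows from injectivity of $v\mapsto r(v|\Pi)$.
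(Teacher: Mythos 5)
Your proposal is correct and follows essentially the same argument as the paper: the paper bounds each part by $|S_i|\le d^{k-1}-(d-1)^{k-1}$ (since a vertex of $S_i$ has $i$-th coordinate $0$, the rest in $\{1,\dots,d\}$ with at least one equal to $1$) and sums over the $k$ parts, which is just the per-part phrasing of your global vector count.
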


\begin{proof}  Let $\Pi=\{ S_1,\dots ,S_k\}$ be an RD-partition.
If $u\in S_i$, then the $i$-th component of $r(u|\Pi)$ is $0$, any other component is a value from $\{ 1,2,\dots , d\}$ and at least one component must be $1$.
Since there are $d^{k-1}-(d-1)^{k-1}$ such $k$-tuples, we have that  $|S_i|\le d^{k-1}-(d-1)^{k-1}$, and therefore,
$\displaystyle n\le \sum_{i=1}^k |S_i|\le k(d^{k-1}-(d-1)^{k-1})$.
\end{proof}

\section{Extremal graphs}\label{sec3}

In \cite{ChaSaZh00,tom08}, all graphs  of order $n\ge9$ satisfying $\beta_p(G)=n$, $\beta_p(G)=n-1$ and $\beta_p(G)=n-2$ were characterized. 
This section is devoted to approach the same problems for the dominating partition dimension $\eta_p(G)$.
To this end, we prove a pair of technical lemmas.

\begin{lemma}\label{lem.fusion}
Let $k\ge 2$ be an integer.
Let $G$ be a graph of order $n$
containing a vertex $u$ of degree $d$.
If  $n\ge 2k+1$ and $k\le d \le n-k-1$, then $\eta_p (G)\le n-k$.
\end{lemma}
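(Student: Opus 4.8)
The plan is to exhibit an explicit resolving dominating partition of $G$ with exactly $n-k$ parts, starting from the trivial observation that the partition into singletons is always an RD-partition (it is resolving because each vertex has distance $0$ only to its own part, and dominating because $G$ is connected) and then \emph{fusing} vertices in pairs so as to save exactly $k$ parts. Since $\deg(u)=d\ge k$ there are at least $k$ neighbors of $u$, and since $d\le n-k-1$ there are at least $k$ non-neighbors, i.e.\ vertices of $V(G)\setminus N[u]$; moreover $n\ge 2k+1$ guarantees that $u$, together with $k$ chosen neighbors and $k$ chosen non-neighbors, are $2k+1$ distinct vertices. First I would keep $\{u\}$ as a singleton part, form $k$ two-element parts each consisting of one neighbor $w_i$ and one non-neighbor $z_i$ of $u$, and leave every remaining vertex as a singleton; this produces $k+1+(n-2k-1)=n-k$ parts.

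Next I would check the two defining properties. Resolving is almost automatic: any two vertices lying in different parts are resolved because each has distance $0$ to its own part and positive distance to the other, so only the $k$ two-element parts need attention, and each pair $\{w_i,z_i\}$ is resolved by the singleton $\{u\}$, since $d(w_i,u)=1\neq d(z_i,u)\ge 2$. For domination, a vertex is dominated exactly when it has a neighbor outside its own part: every singleton is dominated because $G$ is connected, every $w_i$ is dominated through $u\notin\{w_i,z_i\}$, and every $z_i$ is dominated provided it has a neighbor different from $w_i$. This last point is the only place where the naive pairing can break down, and it fails precisely when $z_i$ is a leaf whose unique neighbor is its partner $w_i$.

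Hence the construction reduces to a bipartite matching problem: match $k$ non-neighbors to $k$ neighbors of $u$ so that no \emph{leaf} non-neighbor is assigned to its unique neighbor (non-leaf non-neighbors may be paired with any neighbor). \textbf{The main obstacle} is that such a matching can have size strictly less than $k$ in degenerate configurations, concretely when a single neighbor $w^{\ast}$ of $u$ is the common neighbor of many non-neighbor leaves, which are then mutually twins in the sense of Lemma~\ref{hojcomp}(3); handling this configuration is where I expect the bulk of the work to lie.

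To overcome it I would re-hub the argument and use $w^{\ast}$, rather than $u$, as the singleton resolver. The leaves attached to $w^{\ast}$ lie at distance $1$ from $w^{\ast}$, whereas the other neighbors of $u$ lie at distance $2$ from $w^{\ast}$ through $u$, so pairing each such leaf with a neighbor of $u$ that is non-adjacent to $w^{\ast}$ yields a non-adjacent, hence dominating, two-element part that is resolved by $\{w^{\ast}\}$; any single surplus leaf can instead be paired with $u$ itself and resolved by one of the other two-element parts. This reduces the degenerate case to the same neighbor/non-neighbor pairing scheme centred at $w^{\ast}$, with extra care if some neighbors of $u$ are themselves adjacent to $w^{\ast}$. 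The clean way to organise the dichotomy is via K\"onig's theorem: either the desired size-$k$ matching exists and the hub-$u$ construction applies, or a vertex cover of size less than $k$ exposes a neighbor adjacent to almost all non-neighbors, which is exactly the situation the re-hubbing resolves. In every case the resulting partition has $n-k$ parts and is both resolving and dominating, giving $\eta_p(G)\le n-k$.
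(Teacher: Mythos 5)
Your overall route is the same as the paper's: pair $k$ neighbours of $u$ with $k$ vertices outside $N[u]$, use the singleton $\{u\}$ to resolve each pair, observe that domination can only fail for a leaf paired with its unique neighbour, and invoke a Hall/K\"onig argument on the non-adjacency bipartite graph to produce an admissible pairing. Both arguments isolate the same single degenerate configuration, namely $d=k$ with all $n-k-1\ge k$ vertices outside $N[u]$ being leaves hanging from one neighbour $w^{\ast}$ of $u$. Up to that point your argument is correct.

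The gap is in your treatment of that degenerate case. You propose to use the singleton $\{w^{\ast}\}$ as the resolver for the two-element parts $\{x_i,y_i\}$ (with $x_i$ a neighbour of $u$ other than $w^{\ast}$ and $y_i$ a leaf hanging from $w^{\ast}$), but this works only when $x_i$ is not adjacent to $w^{\ast}$: if $x_iw^{\ast}\in E(G)$ then $d(x_i,w^{\ast})=d(y_i,w^{\ast})=1$ and $\{w^{\ast}\}$ separates nothing. You flag this (``extra care if some neighbours of $u$ are adjacent to $w^{\ast}$'') but do not resolve it, and your requirement to pair leaves only with neighbours of $u$ that are non-adjacent to $w^{\ast}$ cannot be met when every other neighbour of $u$ is adjacent to $w^{\ast}$ --- there are then no admissible partners at all, yet the lemma must still hold. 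The fix is to keep exactly the partition you (and the paper) build, namely $\{u,y_1\},\{x_2,y_2\},\dots,\{x_k,y_k\}$ together with singletons, but to resolve each pair $\{x_i,y_i\}$ by the part $\{u,y_1\}$ rather than by $\{w^{\ast}\}$: since the only neighbour of $y_i$ is $w^{\ast}$, one has $d(y_i,\{u,y_1\})=2$ while $d(x_i,\{u,y_1\})=d(x_i,u)=1$, independently of whether $x_i$ is adjacent to $w^{\ast}$; the pair $\{u,y_1\}$ itself is then resolved by $\{x_2,y_2\}$, because $d(u,\{x_2,y_2\})=d(u,x_2)=1$ while $d(y_1,\{x_2,y_2\})=2$. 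With this substitution your argument closes and coincides with the paper's proof.
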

\begin{proof}
Let $N(u)=\{ x_1,\dots , x_k,\dots, x_d\}$.
Let $L$ be the set containing  all   leaves at distance 2 from $u$
and let  $C$ be the set containing both  all non-leaves at distance $2$ and all vertices at distance at least $3$ from $u$, i.e., $C=V(G)\setminus (N[u]\cup L)$.
Assume that  $|L|=l$ and $|C|=c$ and  observe that $l +c =n-d-1\ge k$.

If $c \ge k$, then take the partition
$\Pi =\{ \{ x_1,y_1\}, \dots ,\{ x_k,y_k\} \}\cup \{ \{ z \} : z\notin \{ x_1, \dots , x_k,y_1, \dots , y_k\} \}$, where $y_1,\dots ,y_k\in C$.
Notice that $\Pi$ is a resolving partition since, for every $i\in\{1,\dots ,k\}$,  $\{u \}$ resolves the pair $x_i,y_i$, because $d(u,x_i)=1<2\le d(u,y_i)$.
Furthermore,  for every $i\in \{1,\dots ,k\}$, vertex  $x_i$ is adjacent to $u$ and  vertex $y_i$ is adjacent to a vertex different from $x_i$, because in the case $y_i$ has degree 1, its neighbor does not belong to $N(u)$ by definition of $C$.
So, $\Pi$ is also a dominating partition and thus $\eta_p(G)\le n-k$.

Now, assume $c < k$. Let $h =k-c$ and observe that $1 \le h \le l$ since $l+c \ge k$.
First, we seek if it is possible to pair $h$ vertices of $L$ with $h$ vertices of $N(u)$ satisfying that each pair is formed by non-adjacent vertices.
Observe that this is equivalent to finding a matching $M$ that saturates a subset $L'$ of $L$  of cardinality $h$  in the bipartite graph $H$ defined as follows:
$N(u)$ and $L$ are its partite sets,  and  if $x_i\in N(u)$ and $z\in L$, then $x_iz\in E(H)$ if and only if  $x_iz\notin E(G)$.
So, the degree  in $H$ of a vertex $z\in L$ is $\deg_H(z)=d-1$.
For every nonempty set $W\subseteq L$ with $|W|\le k-1$, we have $|W|\le k-1\le d-1\le |N_H(W)|$,
and for $W\subseteq L$ with $|W|=k$ we have
$|W|\le |N_H(W)|$ whenever $d\ge k+1$ or $|N_H(W)|\ge k$.
Therefore, according to Hall's Theorem, there exists a matching $M$ saturating a subset $L'$ of $L$ of cardinality $h$, except for the case $h=k=d$, provided that $|N_H(W)|<k$ for every subset $W\subseteq L$ with $|W|=k$.
Let $M$ be such a matching, whenever it exists.
We distinguish two cases.

\vspace{.2cm}
\noindent \textbf{Case 1}: $h<k$.
Consider the partition $\Pi$ formed by the $h$ pairs of the matching $M$,
$c$ pairs formed by pairing the  vertices in $C$ with $c$ vertices in $N(u)$ not used in the matching $M$, and a part for each one of the remaining vertices formed only by the vertex itself.
Part $\{ u \}$ resolves each part of cardinality 2 and, by construction, $\Pi$ is dominating.
Thus, $\Pi$ is an RD-partition, implying that $\eta_p(G)\le n-k$.

\vspace{.2cm}
\noindent \textbf{Case 2}: $h=k$.
In such a case, $c=0$ (i.e., $L=V(G)\setminus N[u]$).
If $d> k$, then consider the partition $\Pi$ formed by the $k$ pairs of the matching $M$ and a part for each one of the remaining vertices formed only by the vertex itself.
As in the preceding case, it can be shown that $\Pi$ is an RD-partition, and so $\eta_p(G)\le n-k$.

If $d=h=k$ and there is a subset $W$ of $L$ of cardinality $k$ with $|N_H(W)|\ge k$, then  there exists a matching $M$ between the vertices of $W$ and the vertices of $N(u)$.
Consider the partition $\Pi$ formed by the $k$ pairs of the matching $M$ and a part for each one of the remaining vertices formed only by the vertex itself.
As in the preceding case, it can be shown that $\Pi$ is an RD-partition, and so $\eta_p(G)\le n-k$.

Finally, if $d=h=k$ and there is no subset $W$ of $L$ of cardinality $k$ with $|N_H(W)|\ge k$, then
all vertices of $L$ are leaves hanging from the same vertex of $N(u)$.
We may assume without loss of generality that all vertices in $L$ are adjacent to $x_1$.
Let $y_1,\dots,y_k\in L$ (they exist because $n\ge 2k+1$).
Consider the partition
$\Pi =\{ \{ u,y_1\}, \{ x_2,y_2\}, \dots ,\{ x_k,y_k\} \}\cup \{ \{ z \} : z\notin \{ u, x_2,\dots ,x_k,y_1, \dots ,y_k\} \}$
(see Figure~\ref{fig_Fusion}).
Notice that $\Pi$ is a resolving partition since, for every $i\in\{2,\dots ,k\}$, $P_1=\{u, y_1\}$ resolves the pair $x_i,y_i$ because $d(x_i,P_1)=d(x_i,u)=1<2= d(y_i,P_1)$; and $P_2=\{x_2,y_2\}$ resolves the pair $u,y_1$, because $d(u,P_2)=d(u, x_2)=1<2= d(y_1,P_2)$.
Besides, every vertex has a neighbor in another part by construction.
Thus, $\Pi$ is an RD-partition, implying that $\eta_p(G)\le n-k$.
\begin{figure}[ht]
\begin{center}
\includegraphics[width=0.28\textwidth]{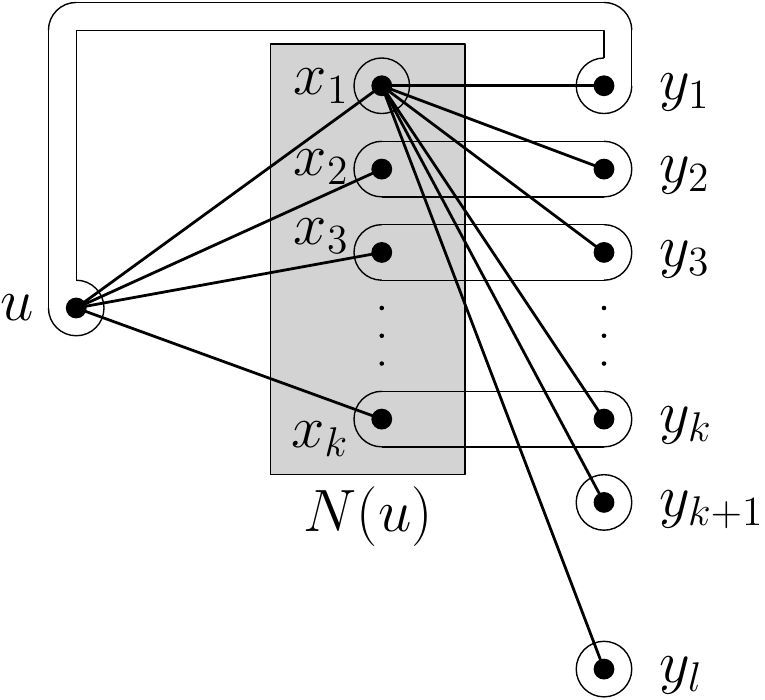}
\caption{An RD-partition of cardinality $n-k$. There may be edges joining vertices of $N(u)$.}
\label{fig_Fusion}
\end{center}
\end{figure}
\end{proof}

\begin{lemma}\label{d3etan-2}
Let $G$ be a graph order $n$.
\begin{enumerate}

\item[\rm(1)] If $n\ge5$ and ${\rm diam}(G)\ge 3$, then $\eta_p(G)\le n-2$.

\item[\rm(2)] If $n\ge7$ and ${\rm diam}(G)\ge 4$, then $\eta_p(G)\le n-3$.

\end{enumerate}
\end{lemma}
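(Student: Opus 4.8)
The plan is to derive both bounds from the fusion Lemma~\ref{lem.fusion}, reducing each part to a statement about the degree sequence and then disposing of the few graphs to which the lemma does not apply.

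For part~(1), I would invoke Lemma~\ref{lem.fusion} with $k=2$; since $n\ge 5=2k+1$, it suffices to produce a vertex $u$ with $2\le \deg(u)\le n-3$. If there were none, every vertex would have degree in $\{1,n-2,n-1\}$, because $G$ is connected. A vertex of degree $n-1$ is universal and forces ${\rm diam}(G)\le 2$, so ${\rm diam}(G)\ge 3$ leaves only the degrees $1$ and $n-2$. To rule this out, let $H$ and $L$ be the sets of vertices of degree $n-2$ and $1$, respectively. If two vertices of $H$ were mutually non-adjacent then, chasing adjacencies, every vertex would land in $H$ and $G$ would be a cocktail-party graph of diameter $2$; hence each $v\in H$ misses a leaf $\overline v$, the set $H$ is a clique, and a leaf is adjacent to exactly those vertices of $H$ that do not miss it. Counting the pairs $(v,\overline v)$ gives $(n-|H|)(|H|-1)=|H|$, an equation with no solution satisfying $|H|\ge 1$, $|L|\ge 1$ and $n\ge 5$. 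Thus the desired vertex exists and $\eta_p(G)\le n-2$.

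For part~(2), I would apply Lemma~\ref{lem.fusion} with $k=3$ (legitimate since $n\ge 7=2k+1$), so it is enough to find a vertex of degree in $[3,n-4]$. Now ${\rm diam}(G)\ge 4$ forbids degree $n-1$ (diameter $\le 2$) and also degree $n-2$: a vertex missing a single vertex has eccentricity $2$, which forces diameter $\le 3$. Hence, if no vertex of degree in $[3,n-4]$ exists, every degree lies in $\{1,2,n-3\}$. When $\Delta(G)\le 2$ the graph is a path or a cycle and $\eta_p(G)=3\le n-3$ (here $n\ge 7$), so the only remaining situation is a vertex $v$ of degree $n-3$ whose non-neighbours form a set $\{a,b\}$ of size exactly two.

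This last case is the main obstacle, and it is precisely where the pairing of Lemma~\ref{lem.fusion} runs out of material: only $|V\setminus N[v]|=2$ vertices lie outside $N[v]$, one short of the three needed to save three parts. I would handle it by a direct construction. Since any two neighbours of $v$ are within distance $2$, a diametral pair must involve $a$ or $b$, so $G$ consists of the near-universal vertex $v$ carrying some leaves and degree-$2$ vertices, together with a short tail (of length at most $2$) bearing $a,b$; in particular one checks that at least three vertices other than $v$ fail to be leaves at $v$. The leaves at $v$ are mutually twins, so they must occupy distinct parts and none may share a part with $v$. Keeping $\{v\}$ as its own part dominates every such leaf, and I would obtain the three required savings by performing three merges chosen so as never to join two twins nor a leaf with its support, using $\{v\}$ and the tail parts to resolve each merged pair. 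Verifying that these merges can always be arranged and that they resolve is the one genuinely case-dependent step; it yields an RD-partition of cardinality $n-3$ and completes the proof.
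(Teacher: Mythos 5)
Your part (1) is correct but follows a genuinely different route from the paper. The paper picks a vertex $u$ of eccentricity $3$ (so $\deg(u)\le n-3$ automatically) and, when $u$ is a leaf, builds explicit RD-partitions of cardinality $n-2$ along a geodesic from $u$; you instead prove globally that some vertex of degree in $[2,n-3]$ must exist, by showing that no connected graph on $n\ge5$ vertices of diameter at least $3$ has all degrees in $\{1,n-2\}$. Your count $(n-|H|)(|H|-1)=|H|$ checks out once one notes that $H$ must be a clique and that every leaf's support lies in $H$, and the equation indeed has no admissible solution for $n\ge5$. This is arguably cleaner than the paper's treatment: it replaces two explicit constructions by a short counting argument.

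Part (2), however, has a genuine gap. The reduction to ``all degrees lie in $\{1,2,n-3\}$ and some vertex $v$ has degree $n-3$'' is sound (degree $n-2$ does force diameter at most $3$, and $\Delta(G)\le2$ gives a path or a cycle), but this residual case is exactly where Lemma~\ref{lem.fusion} breaks down, since $|V(G)\setminus N[v]|=2<3$, and it is not vacuous: take $v$ adjacent to $c,x_1,\dots,x_{n-4}$ with each $x_i$ a leaf and $a$--$b$--$c$ a pendant path; this has diameter $4$ and $\deg(v)=n-3$. Your handling of it --- ``perform three merges \dots\ verifying that these merges can always be arranged and that they resolve is the one genuinely case-dependent step'' --- asserts precisely what needs to be proved. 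The obstructions are concrete: the leaves at $v$ form a twin set of size up to $n-4$, so almost every part is forced to be a singleton leaf, and the remaining vertices must be distributed subject to domination (no leaf may share a part with its support; the part containing $a$ must still see $N(a)$) and to resolution (whether a merged pair is separated depends on whether $a\sim b$, on how the tail attaches to $N(v)$, and on the fact that two degree-$2$ neighbours of $v$ with the same second neighbour are twins and cannot be merged). The paper spends four explicit case constructions, indexed by $|D_1|,|D_2|,|D_3|$ along a geodesic from an eccentricity-$4$ vertex, on its version of this residue; an equivalent amount of explicit verification is required here and is missing from your argument.
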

\begin{proof}
{\rm(1)}
Let ${\rm diam}(G)=d$.
If $d\ge 4$, then according to Corollary \ref{n-d+2}, $\eta_p(G) \le n-d+2 \le n-2$.
Assume thus that $d=3$ and  take  a vertex $u$ of eccentricity $ecc(u)=3$.
If $u$ is not a leaf, then $2 \le \deg(u) \le n-3$ and, by Lemma \ref{lem.fusion}, $\eta_p(G)\le n-2$.
If $u$ is a leaf, then consider the sets  $D_i=\{v \ | \ d(u,v)=i\}$, $i\in\{1, 2, 3\}$.
Take  $x_i \in D_i$, $i\in\{1,2, 3\}$ such that $\{ux_1,x_1x_2,x_2x_3\} \subseteq E(G)$.
We distinguish cases depending on the cardinality of $D_2$.

\vspace{.1cm}
\noindent \textbf{Case 1}: $|D_2|\ge2$.
 Take a vertex $y_2\in D_2- x_2$.
 Note that $x_1y_2 \in E(G)$, since $u$ is a leaf.
Take  the partition:
$$\Pi =
\{ \{x_1, x_2\},  \{x_3,y_2 \} \cup
\{ \{ z \} : z\neq  x_1, x_2,x_3,y_2 \}.
$$
Clearly, $\Pi$ is an RD-partition of $G$  of cardinality $n-2$.
Thus,  $\eta_p(G)\leq n-2$.

\vspace{.1cm}
\noindent \textbf{Case 2}: $|D_2|=1$.
Notice that $|D_3|\ge2$ since $n\ge5$.
Take a vertex $y_3\in D_3- x_3$.
Observe that $x_2y_3\in E(G)$.
Take  the partition:
$$\Pi =
\{ \{x_1, x_2\},  \{u,y_3 \} \cup
\{ \{ z \} : z\neq  u, x_1, x_2,y_3 \}.
$$
Clearly, $\Pi$ is an RD-partition of $G$  of cardinality $n-2$.
Thus,  $\eta_p(G)\leq n-2$.

\item[(2)] If $d\ge 5$, then according to Corollary \ref{n-d+2}, $\eta_p(G) \le n-d+2 \le n-3$.
Assume thus that $d=4$ and  take  a vertex $u$ of eccentricity of $ecc(u)=4$.
Notice that  $\deg(u)\leq n-4$ and hence, according to  Lemma \ref{lem.fusion} (case $k=3$),  $\eta_p(G)\le n-3$
whenever $\deg(u)\geq 3$.
Suppose finally that $1\le \deg(u) \le 2$ and consider the sets  $D_i=\{v \ | \ d(u,v)=i\}$, $i\in\{1, 2,3, 4\}$.
Notice that $1 \le |D_1| \le 2$.
Take  $x_i \in D_i$, $i\in\{1, 2,3, 4\}$ such that $\{ux_1,x_1x_2,x_2x_3,x_3x_4\}\subseteq E(G)$.
We distinguish cases depending on the cardinality of $D_1$ and $D_2$.

\vspace{.2cm}
\noindent \textbf{Case 1}: $|D_1|=2$.
Take a vertex $y_1\in D_1- x_1$.
Take  the partition:
$$\Pi =
\{ \{u, x_1\},  \{x_2, x_3\}, \{x_4,y_1 \} \cup
\{ \{ z \} : z\neq  u,x_1, x_2,x_3,x_4,y_1 \}.
$$
Clearly, $\Pi$ is an RD-partition of $G$  of cardinality $n-3$.
Thus,  $\eta_p(G)\leq n-3$.

\vspace{.2cm}
\noindent \textbf{Case 2}: $|D_1|=1$ and $|D_2|\ge2$.
Take a vertex $y_2\in D_2- x_2$.
Take  the partition:
$$\Pi =
\{ \{u, x_4\},  \{x_1, x_2\}, \{x_3,y_2 \} \cup
\{ \{ z \} : z\neq  u,x_1, x_2,x_3,x_4,y_2 \}.
$$
Clearly, $\Pi$ is an RD-partition of $G$  of cardinality $n-3$.
Thus,  $\eta_p(G)\leq n-3$.

\vspace{.2cm}
\noindent \textbf{Case 3}: $|D_1|=1$, $|D_2|=1$ and $|D_3|\ge2$.
Take a pair of  vertices $y_3,w\in D_3\cup D_4 \setminus \{x_3,x_4\}$ such that $y_3\in D_3$.
Take  the partition:
$$\Pi =
\{ \{x_1, w\},  \{x_2, x_3\}, \{x_4,y_3 \} \cup
\{ \{ z \} : z\neq  x_1, x_2,x_3,x_4,y_3,w \}.
$$
Clearly, $\Pi$ is an RD-partition of $G$  of cardinality $n-3$.
Thus,  $\eta_p(G)\leq n-3$.

\vspace{.2cm}
\noindent \textbf{Case 4}: $|D_1|=1$, $|D_2|=1$ and $|D_3|=1$.
Take a pair of  vertices $y_4,w_4\in D_4 -x_4$.
Note that $\{x_3y_4,x_3w_4\}\subseteq E(G)$.
Take  the partition:
$$\Pi =
\{ \{u, y_4\},  \{x_1 w_4\}, \{x_2,x_3 \} \cup
\{ \{ z \} : z\neq  u, x_1, x_2,x_3,x_4,y_4,w_4 \}.
$$
Clearly, $\Pi$ is an RD-partition of $G$  of cardinality $n-3$.
Thus,  $\eta_p(G)\leq n-3$.
\end{proof}

In \cite{ChaSaZh00}, all graphs  of order $n$ satisfying $n-1 \le \beta_p  \le n$ were  characterized (see Theorem  \ref{mdpd}).
We display a similar result for the dominating partition dimension $\eta_p$.

\begin{theorem}\label{etann-1}
If $G$  is  a graph of order $n\ge6$, then

\begin{enumerate}
\item[\rm(1)] $\eta_p(G)=n$ if and only if $G$ is isomorphic to either the complete graph $K_n$ or the star $K_{1,n-1}$.
\item[\rm(2)]
$\eta_p(G)=n-1$ if and only if $G$ is isomorphic to either the complete split graph $K_{n-2} \vee \overline{K_2}$,  or the graph $K_1\vee (K_1+K_{n-2})$.
\end{enumerate}
\end{theorem}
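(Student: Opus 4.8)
The plan is to prove both equivalences by splitting into the two directions, exhibiting explicit RD-partitions for the ``if'' parts and progressively constraining $G$ for the ``only if'' parts, using Theorem~\ref{etabeta}, Theorem~\ref{mdpd}, Lemma~\ref{lem.fusion} and Lemma~\ref{d3etan-2} as the main engine.

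For part~(1) I would start from Theorem~\ref{etabeta}: since $\eta_p(G)=n$ forces $\beta_p(G)\ge n-1$, the partition dimension is $n$ or $n-1$, so Theorem~\ref{mdpd}(4)--(5) leave only the four candidates $K_n$, $K_{1,n-1}$, $K_{n-2}\vee\overline{K_2}$ and $K_1\vee(K_1+K_{n-2})$. I then compute $\eta_p$ for each. For $K_n$ one has $\eta_p\ge\beta_p=n$ by Theorem~\ref{mdpd}(4), so $\eta_p(K_n)=n$; for $K_{1,n-1}$ the $n-1$ leaves form a twin set of leaves, so Lemma~\ref{hojcomp}(3) gives $\eta_p\ge n$ and hence $\eta_p(K_{1,n-1})=n$. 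The other two candidates will be shown below to admit RD-partitions of cardinality $n-1$, so they satisfy $\eta_p<n$ and are discarded. This settles both directions of part~(1), the converse being exactly the two computations $\eta_p(K_n)=\eta_p(K_{1,n-1})=n$.

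For the ``if'' direction of part~(2) I use that both graphs have $\beta_p=n-1$ by Theorem~\ref{mdpd}(5), whence $\eta_p\ge\beta_p=n-1$ by Theorem~\ref{etabeta}, and it remains to build an RD-partition of size $n-1$ in each. Writing $\overline{K_2}=\{a,b\}$ and $K_{n-2}=\{x_1,\dots,x_{n-2}\}$, the partition with the single pair $\{a,x_1\}$ and all other vertices singletons is resolving (the singleton $\{b\}$ separates $a$ from $x_1$) and dominating; for $K_1\vee(K_1+K_{n-2})$, with apex $c$, pendant $w$ and clique $\{x_1,\dots,x_{n-2}\}$, the partition with pair $\{w,x_1\}$ and the rest singletons is resolving (via $\{x_2\}$) and dominating. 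For the ``only if'' direction I first reduce the structure drastically: Lemma~\ref{d3etan-2}(1) forces ${\rm diam}(G)\le 2$, and ${\rm diam}(G)=1$ is impossible since $\eta_p(K_n)=n$, so ${\rm diam}(G)=2$; then Lemma~\ref{lem.fusion} with $k=2$ shows no vertex can have degree in $[2,n-3]$, so every degree lies in $\{1,n-2,n-1\}$. If $G$ has a leaf, a diameter-$2$ argument shows all leaves hang from a common universal vertex $c$; if every other vertex were also a leaf then $G\cong K_{1,n-1}$, excluded because $\eta_p(K_{1,n-1})=n$, so some non-leaf $r\neq c$ exists, and since $r$ is non-adjacent to every leaf its degree is at most $n-1$ minus the number of leaves, which together with $\deg(r)\ge n-2$ forces exactly one leaf; the remaining $n-2$ vertices then form a clique joined to $c$, giving $G\cong K_1\vee(K_1+K_{n-2})$. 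If $G$ has no leaf, every degree is $n-2$ or $n-1$, and each degree-$(n-2)$ vertex has a unique non-neighbour, necessarily also of degree $n-2$, so these vertices split into open-twin pairs $\{a_i,b_i\}$ forming a perfect matching in the complement.

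The step I expect to be the main obstacle is showing, in this leafless case, that only one twin pair survives. Naive merges fail: because $a_i$ and $b_i$ are twins, putting the two distinguishing vertices into one part hides the only coordinate that separates them, so the tempting partition $\{a_1,a_2\},\{b_1,b_2\}$ is not resolving. I would instead argue that two or more pairs yield an RD-partition of cardinality $n-2$, contradicting $\eta_p=n-1$. With at least three pairs, the single part $\{a_1,a_2,a_3\}$ with all other vertices as singletons works, since the singletons $\{b_1\},\{b_2\},\{b_3\}$ pairwise resolve the merged vertices and domination is immediate from the high degrees. With exactly two pairs one has $|U|=n-4\ge 2$ universal vertices, and the partition $\{a_1,u_1\},\{a_2,u_2\}$ with the rest singletons is resolving because $\{b_1\}$ and $\{b_2\}$ stay singletons and separate the merged pairs, and it is dominating since every vertex has a neighbour outside its part. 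Thus exactly one pair remains, forcing $G\cong K_{n-2}\vee\overline{K_2}$ and completing the proof.
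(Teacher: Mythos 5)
Your proposal is correct and follows essentially the same route as the paper: part (1) by combining Theorem~\ref{etabeta} with Theorem~\ref{mdpd}(4)--(5), and part (2) by using Lemma~\ref{d3etan-2}(1) to force ${\rm diam}(G)=2$, Lemma~\ref{lem.fusion} with $k=2$ to restrict all degrees to $\{1,n-2,n-1\}$, and explicit RD-partitions of cardinality $n-2$ to eliminate the non-target configurations. The only difference is cosmetic: you organize the case analysis by leaf/no-leaf and the complement's matching structure, whereas the paper cases on the degrees of a fixed diametral pair $u,v$, but the lemmas invoked and the style of the finishing constructions are the same.
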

\begin{proof}
\begin{enumerate}

\item[(1)] According to Theorem \ref{etabeta}, if $\eta_p(G)=n$  then  $n-1\leq \beta_p(G) \leq n$.
By direct inspection on graphs with $\beta_p(G)=n$  and $\beta_p(G)=n-1$ (see Theorem \ref{mdpd})  the stated result is derived.

\item[(2)]  
	It is a routine exercise to check that  $\eta_p(K_{n-2} \vee \overline{K_2})=\eta_p(K_1\vee (K_1+K_{n-2}))=n-1$.
Conversely, let $G$ be a graph such that $\eta_p(G)=n-1$.
By Lemma \ref{d3etan-2}(1), ${\rm diam}(G)=2$, since $G\not\cong K_n$.
Take a pair of vertices $u,v$ such that $d(u,v)=2$.
By Lemma \ref{lem.fusion}  (case $k=2$), $\deg(u),\deg(v)\in \{1,n-2\}$.
We distinguish three cases.

\noindent \textbf{Case 1}: $\deg(u)=\deg(v)=1$.
Let $w$ be the vertex such that $N(u)=N(v)=\{w\}$.
By Lemma \ref{lem.fusion}, the rest of vertices of $G$ have degree 1, as they are not adjacent neither to $u$ nor to $v$.
Hence, all vertices of $G$ other than vertex $w$ are leaves hanging from $w$, i.e., $G\cong K_{1,n-1}$, a contradiction.

\noindent \textbf{Case 2}: $\deg(u)=\deg(v)=n-2$.
In this case, $N(u)=N(v)=V(G) \setminus \{u,v\}=W$ and for all vertex $z \in W$, $\deg(z)\geq 2$.
Then, by  Lemma \ref{lem.fusion}  (case $k=2$), $\deg(z) \in \{n-2, n-1\}$.

      If  $\deg(z)= n-1$ for all $z \in W$,  then $G$ is isomorphic to  the complete split graph $K_{n-2} \vee \overline{K_2}$.
			
      If there is a vertex $t \in W$ such that $\deg(t)= n-2$, then let $s \in W$ be the vertex that is not adjacent to $t$.
      Observe that both $t$ and $s$ are adjacent to any other vertex of $W$.
      If $a,b\in W \setminus \{s,t\}$, then
      $\Pi =
\{ \{ u, a \}, \{ s, b \}  \} \cup
\{ \{ z \} : z\neq a, b, u, s \}
$
is an RD-partition, and thus $\eta_p (G)\le n-2$.

\noindent \textbf{Case 3}: $\deg(u)=1$ and $\deg(u)=n-2$.
Let $w$ be the vertex adjacent to $u$.
	Since the diameter is $2$, every vertex $t \notin \{u, w, v\}$ is adjacent both to $w$ and $v$.
	In particular, for all vertex $t \notin \{u, w, v\}$, $\deg(t)\geq 2$ and, by Lemma \ref{lem.fusion} (case $k=2$), $\deg(t)=n-2$ and then $G$ is isomorphic to the graph $K_1\vee (K_1+K_{n-2})$.
\end{enumerate}
\vspace{-.5cm}\end{proof}

Next, we characterize those graphs with $\eta_p(G)=n-2$.
Concretely, we prove  that, for every integer $n\ge7$, a graph  of order $n$ satisfies $\eta_p(G)=n-2$ if and only if it belongs to the family $\Lambda_n=\{H_1,\dots ,H_{17}\}$  (see Figure \ref{17grafos}).

\vspace{.4cm}
\begin{figure}[hbt]
	\begin{center}
		\includegraphics[width=0.88\textwidth]{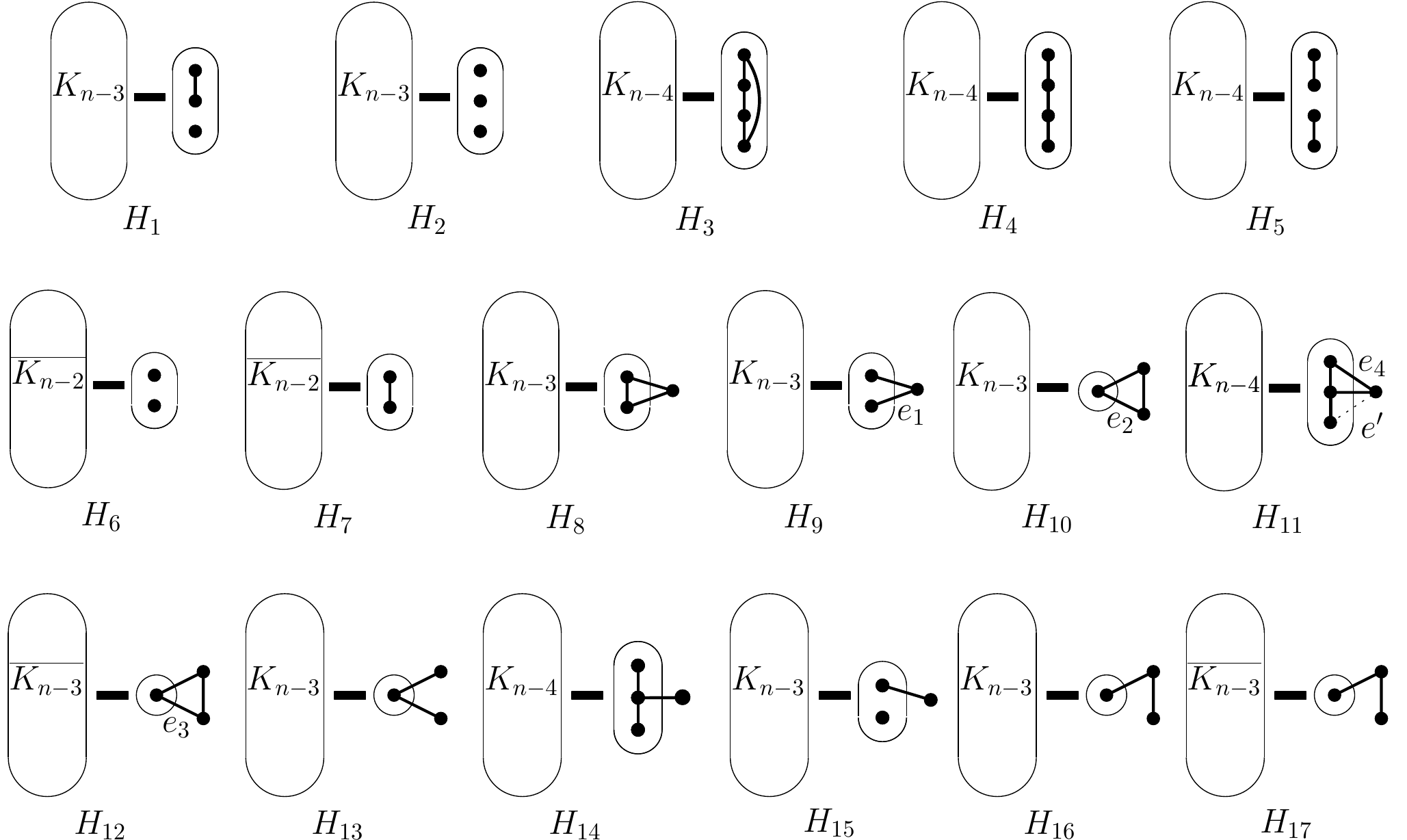}
	\end{center}
	
	\begin{center}
		\begin{tabular}{lll}
			\hline
			&\\
			$ H_1\cong K_{n-3}\vee (K_2+K_1)$    &
			$ H_2\cong K_{n-3}\vee \overline{K_{3}} $ &
			$ H_{3}\cong  K_{n-4}\vee C_4  $  \\
			$ H_{4}\cong K_{n-4}\vee P_4  $  &
			$ H_{5}\cong  K_{n-4}\vee 2\, K_2  $ &
			$ H_6 \cong  K_{2,n-2}$ \\
			$ H_7\cong \overline{K_{n-2}} \vee K_2$ &
			$ H_8\cong (K_{n-3}+K_1)\vee K_2 $ &
			$ H_9\cong  (K_{n-3}+K_1)\vee \overline{K_2} $  \\
			$ H_{10}\cong  (K_{n-3} + K_2) \vee K_1 $ &
			$ H_{11}\cong  (K_{n-4} + K_1) \vee P_3 - e'$ &
			$ H_{12}\cong (\overline{K_{n-3}}+ K_2) \vee K_1 $ \\
			$ H_{13}\cong  (K_{n-3}+\overline{K_2})\vee K_1 $ &
			$ H_{14}\cong H_{11} - e_4$ &
			$ H_{15}\cong  H_9 - e_1 $  \\
			$ H_{16}\cong H_{10} - e_2 $ &
			$ H_{17}\cong  H_{12} - e_3 $ \\
			&\\
			\hline
		\end{tabular}
		\caption{The family $\Lambda_n$ of all graphs of order $n\ge7$ such that $\eta_p(G)=n-2$.}
		\label{17grafos}       
	\end{center}
\end{figure}

\begin{prop}\label{propida}
	
	If $G\in\Lambda_n=\{  H_1,\dots ,H_{17}\}$, then $\eta_p(G)=n-2$.
	Moreover, if $G\in \Lambda_n\setminus \{H_{12},H_{17}\}$, then $\beta_p(G)=n-2$.
\end{prop}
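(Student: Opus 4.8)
My plan is to handle the upper bound $\eta_p(G)\le n-2$ and the lower bounds separately, and to exploit throughout the reduction afforded by Theorem~\ref{etabeta}. For the upper bound I would note that every graph in $\Lambda_n$ possesses a vertex $u$ of degree $d$ with $2\le d\le n-3$: for instance, in a join $K_{n-3}\vee H$ the vertices of the small part $H$ (or the pendant $K_1$ of a disjoint union) have moderate degree, and in $K_{2,n-2}$ the vertices of the large side have degree exactly $2$. Since $n\ge 7\ge 5$, applying Lemma~\ref{lem.fusion} with $k=2$ then gives $\eta_p(G)\le n-2$; for any graph where no such vertex is convenient I would instead write down an explicit RD-partition of size $n-2$, which is routine because all these graphs have diameter $2$. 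By Theorem~\ref{etabeta} this already yields $\beta_p(G)\le \eta_p(G)\le n-2$, so only matching lower bounds remain.

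For the fifteen graphs in $\Lambda_n\setminus\{H_{12},H_{17}\}$ the key observation is that it suffices to prove $\beta_p(G)\ge n-2$: combined with $\beta_p(G)\le n-2$ this forces $\beta_p(G)=n-2$, and then $\eta_p(G)\ge \beta_p(G)=n-2$ together with $\eta_p(G)\le n-2$ gives $\eta_p(G)=n-2$. To establish $\beta_p(G)\ge n-2$ I would locate a suitable twin set in each graph and invoke Lemma~\ref{hojcomp}. Most of these graphs contain a (near-)universal clique forming a complete twin set of cardinality $n-3$ — for example $K_{n-3}$ in $H_1,H_2,H_8,H_9,H_{10},H_{13}$ — so Lemma~\ref{hojcomp}(2) gives $\beta_p\ge (n-3)+1=n-2$ immediately; while $H_6\cong K_{2,n-2}$ and $H_7\cong \overline{K_{n-2}}\vee K_2$ contain an independent twin set of cardinality $n-2$, and Lemma~\ref{hojcomp}(1) gives the same bound. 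The edge-deleted graphs $H_{15},H_{16}$ I would handle by identifying the twin set that survives the deletion.

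The delicate lower-bound cases are $H_3\cong K_{n-4}\vee C_4$, $H_4\cong K_{n-4}\vee P_4$, $H_5\cong K_{n-4}\vee 2K_2$ and the variants $H_{11},H_{14}$, whose largest complete twin set is the clique $K_{n-4}$ of cardinality $n-4$, so Lemma~\ref{hojcomp}(2) yields only $\beta_p\ge n-3$; I expect the extra step here to be the main obstacle. I would argue directly from a hypothetical resolving partition $\Pi$: its $n-4$ clique vertices occupy $n-4$ distinct parts, and a vertex $x$ among the four outside the clique can share a part with a clique vertex only if some part of $\Pi$ lies entirely at distance $\ge 2$ from $x$ — otherwise $r(x|\Pi)$ coincides with the row of the clique vertex it shares with, since $x$ is adjacent to every clique vertex. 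Because the four outside vertices split into twin pairs that must be separated, a short case analysis then shows that at most two of them can be absorbed into clique parts and that the rest force at least two further parts, giving $\beta_p\ge (n-4)+2=n-2$.

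Finally, for $H_{12}\cong (\overline{K_{n-3}}+K_2)\vee K_1$ and $H_{17}\cong H_{12}-e_3$ the reduction of the second paragraph is unavailable, since here $\beta_p=n-3$, which is exactly why they are excluded from the $\beta_p$-statement. Instead I would bound $\eta_p$ directly: both graphs contain a set of $n-3$ leaves hanging from the apex vertex, forming a twin set, so Lemma~\ref{hojcomp}(3) gives $\eta_p\ge (n-3)+1=n-2$. With the upper bound from the first paragraph this yields $\eta_p=n-2$ and completes the proof. The whole argument is a finite but lengthy verification; the only genuinely non-routine step is the absorption counting for the $K_{n-4}\vee(\,\cdot\,)$ family in the third paragraph.
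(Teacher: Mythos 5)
Your proposal is correct and follows essentially the same route as the paper: lower bounds via twin sets and Lemma~\ref{hojcomp} (complete twin set of size $n-3$ for most graphs, independent twin set of size $n-2$ for $H_6,H_7$, the leaf twin set for $H_{12},H_{17}$), plus a direct contradiction argument against a resolving partition of cardinality $n-3$ for the $K_{n-4}\vee(\cdot)$ family $H_3,H_4,H_5,H_{11},H_{14}$, which is exactly the paper's Case 4. The one divergence is the upper bound: the paper gets $\eta_p(G)\le n-2$ in a single line from Theorem~\ref{etann-1} (none of the $H_i$ is among the four graphs with $\eta_p\ge n-1$), whereas your appeal to Lemma~\ref{lem.fusion} with $k=2$ actually fails for $H_3\cong K_{n-4}\vee C_4$, where every vertex has degree $n-2$ or $n-1$, so you would need the explicit-partition fallback you mention; the characterization-based argument avoids this entirely.
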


\begin{proof}
	
	According to Theorem \ref{etann-1}, for every graph $H_i\in \Lambda_n$, $\beta_p(G) \le \eta_p(G)\le n-2$.
	Thus, it is enough  to check that, for every graph $H_i\in \Lambda_n$, $\eta_p(H_i)\geq n-2$, and also that if $i\not\in \{12,17\}$, then $\beta_p(H_{i})\geq n-2$.
	
	\vspace{.2cm}
	\noindent \textbf{Case 1}: If $G \in \{H_6, H_7\}$, then it contains a twin set  $W$ of cardinality $n-2$ (see Figure~\ref{17grafos})  and thus, by Lemma \ref{hojcomp}, $\eta_p(G)\ge \beta_p(G)\geq n-2$.
	
	\vspace{.2cm}
	\noindent \textbf{Case 2}: If $G \in \{H_1, H_{2}, H_{8}, H_{9}, H_{10}, H_{13}, H_{15}, H_{16}\}$, then there exists a set of vertices  $W$ of $n-3$ vertices of $G$ such that $W$ induces a complete graph (see Figure~\ref{17grafos}), and thus, according to Lemma \ref{hojcomp}, $\eta_p(G)\ge \beta_p(G)\ge (n-3)+1=n-2$.
	
	\vspace{.2cm}
	\noindent \textbf{Case 3}: If $G \in \{H_{12}, H_{17}\}$, then $G$ is a graph with a twin set of $n-3$ leaves  (see Figure~\ref{17grafos}) and, by Lemma \ref{hojcomp},
	$\beta_p(G)\ge n-3$ and $\eta_p(G)\ge (n-3)+1=n-2$.
	
	\vspace{.2cm}
	\noindent \textbf{Case 4}: If $G \in \{H_{3}, H_{4}, H_{5}, H_{11}, H_{14}\}$, then there exists a twin set $W$ of cardinality $n-4$ that $W$ induces a complete graph (see Figure~\ref{17grafos}), and thus, by Lemma \ref{hojcomp}, $\eta_p(G)\ge \beta_p(G)\ge (n-4)+1=n-3$.
	Suppose that there exists a resolving partition  $\Pi=\{S_1, \dots, S_{n-3}\}$ of cardinality $n-3$.
	Assume that $W=\{w_1, \dots, w_{n-4}\}$ and $w_i\in S_i$, for every $i\in\{1,\ldots,n-4\}$, so that $S_{n-3}\cap W=\emptyset$.
	Notice also that all these graphs have diameter $2$. We distinguish two cases.
	
	\vspace{.2cm}
	\noindent \textbf{Case 4.1}:
	If $G \in \{H_{3},H_{4},H_{5}\}$, then $N[W]=V(G)$ and
	$|V(G) \setminus W|=4$.
	Clearly,  $\vert S_{n-3}\vert=1$, since $r(z|\Pi)=(1, \dots,1,0)$ for every $z\in S_{n-3}$.
	Notice also that $\vert S_{i}\vert\leq2$ for  $i\in \{1, \dots ,n-4\}$, since for every $x\in S_i$ we have
	$r(x|\Pi)=(1,\ldots,1,\overset{i)}{0},1,\ldots,1,h)$, with $h\in\{1,2\}$.
	Hence, there must be exactly three sets of $\Pi$ of cardinality 2 and we can suppose without loss of generality that $S_1=\{w_1,x\}$, $S_2=\{w_2,y\}$, $S_3=\{w_3,z\}$ and $S_{n-3}=\{t\}$,
	where $\{x,y,z,t\}=V(G)\setminus W$. We know that $d(t,w_1)=d(t,w_2)=d(t,w_3)=1$,
	hence $d(t,x)=d(t,y)=d(t,z)=2$, a contradiction, because there is no vertex satisfying this condition in $V(G) \setminus W$.
	
	\vspace{.2cm}	
	\noindent \textbf{Case 4.2}:
	If $G \in \{H_{11},H_{14}\}$, then $\vert N[W] \setminus W\vert =3$.
	We may assume $N[W] \setminus W=\{a,b,c\}$
	and  $V(G)\setminus N[W]=\{z\}$ with $d(a,b)=d(b,c)=1$, $d(b,z)=1$ and $d(c,z)=2$ in both graphs.
	Notice that $S_{n-3}$ has as most one vertex from
	$\{a,b,c\}$,
	since  $r(x|\Pi)=(1, \dots,1,0)$ whenever $x\in\{a,b,c\}\cap S_{n-3}$.
	Moreover, $b\notin S_{n-3}$,
	because if $b\in S_{n-3}$, then $a\notin S_{n-3}$ so that $a\in S_i$, for some $i\in \{ 1,\dots ,n-4\}$, and then $r(a|\Pi)=r(w_i|\Pi)=(1,\dots,1,\overset{i)}0,1,\dots,1,1)$, a contradiction.
	So, we can assume without loss of generality that $\{w_1,b\}\subseteq S_1$.
	Thus, $S_{n-3}=\{  z \}$, otherwise $a$ or $c$ should belong to $S_{n-3}$, so that  $r(w_1|\Pi)=r(b|\Pi)=(0,1,\dots ,1,1)$, a contradiction. Hence $c\in S_j$, for some $j\in \{ 1,\dots,n-4\}$,
	but then $r(w_j|\Pi)= r(c|\Pi)=(1,\dots , 1,\overset{j)}0,1\dots ,1,2)$, a contradiction.
\end{proof}


\vspace{.2cm}
The remainder of this section is devoted to showing that these 17 graph families are the only ones satisfying  $\eta_p(G)=n-2$.

First, note that as a direct consequence of Lemma \ref{d3etan-2}(2) the following result is derived.

\begin{cor}
	If $G$ is a graph with $\eta_p(G)= n-2$, then  $2 \le {\rm diam}(G) \le 3$.
\end{cor}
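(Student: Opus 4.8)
The plan is to prove the two inequalities separately, in each case reducing to a fact already established in this section together with the trivial trichotomy ${\rm diam}(G)=1$, ${\rm diam}(G)\in\{2,3\}$, or ${\rm diam}(G)\ge 4$.

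For the upper bound ${\rm diam}(G)\le 3$, I would argue by contraposition using Lemma \ref{d3etan-2}(2). That lemma guarantees that whenever $n\ge 7$ and ${\rm diam}(G)\ge 4$ one has $\eta_p(G)\le n-3$. Since the hypothesis is $\eta_p(G)=n-2$, which strictly exceeds $n-3$, a graph with $\eta_p(G)=n-2$ cannot satisfy ${\rm diam}(G)\ge 4$; hence ${\rm diam}(G)\le 3$.

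For the lower bound ${\rm diam}(G)\ge 2$, I would rule out the only remaining alternative, namely ${\rm diam}(G)=1$. A connected graph of diameter $1$ is isomorphic to the complete graph $K_n$, and by Theorem \ref{etann-1}(1) (equivalently, because the whole vertex set of $K_n$ is a twin set inducing a complete graph, so that Lemma \ref{hojcomp}(2) forces every part of a resolving partition to be a singleton) we have $\eta_p(K_n)=n$. As $\eta_p(G)=n-2\ne n$, the graph $G$ is not complete, and therefore ${\rm diam}(G)\ge 2$.

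The only point requiring attention is purely bookkeeping: the order hypothesis $n\ge 7$ demanded by Lemma \ref{d3etan-2}(2) must be in force, which it is throughout the present section. Beyond this there is no real obstacle, since the statement is a genuine corollary obtained by combining the two extremal facts above; the \emph{hard part}, such as it was, lies entirely in Lemma \ref{d3etan-2}(2), whose explicit RD-partition constructions are what actually do the work.
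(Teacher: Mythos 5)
Your argument is correct and is essentially the paper's own: the paper derives the corollary as a direct consequence of Lemma \ref{d3etan-2}(2) for the upper bound, with the lower bound following trivially since $\eta_p(K_n)=n\ne n-2$. Your remark about the implicit hypothesis $n\ge 7$ is apt, as the paper's surrounding section indeed assumes it.
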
\label{d2d3}

\subsection{Case diameter 2}

Let $G$ be a graph such that  $\eta_p(G)= n-2$ and ${\rm diam}(G)=2$.
We distinguish two cases depending whether $\delta(G)\ge n-3$ or $\delta(G)\le n-4$.
To approach the  first case (notice that the restriction ${\rm diam}(G)=2$ is redundant) we need the following technical lemma.

\begin{lemma}\label{lemma22}
	Let $G$ be a graph of order  $n\ge 7$ and   minimum degree $\delta (G)$ at least $n-3$.
	If $G$ contains at most $n-5$ vertices of degree $n-1$, then $\eta_p (G)\le n-3$.
\end{lemma}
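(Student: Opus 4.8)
The plan is to work in the complement $\overline{G}$. Since $\delta(G)\ge n-3$, every vertex misses at most two others, so $\Delta(\overline{G})\le 2$ and $\overline{G}$ is a disjoint union of isolated vertices, paths and cycles. A vertex of $G$-degree $n-1$ is precisely an isolated vertex of $\overline{G}$, so the hypothesis says that $\overline{G}$ has at least five non-isolated vertices; as these span paths and cycles, $\overline{G}$ contains at least three edges. One also checks that ${\rm diam}(G)=2$: any two $G$-nonadjacent vertices $x,y$ satisfy $|N_G(x)\cap N_G(y)|\ge 2(n-3)-n=n-6\ge 1$, so they have a common neighbour, while nonadjacent pairs exist because $\overline{G}$ has edges. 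Hence all distances in $G$ lie in $\{0,1,2\}$.

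The goal is an RD-partition with exactly $n-3$ parts, obtained by merging vertices so as to ``save'' three. Domination is then free: every part used has bounded size while each vertex has degree at least $n-3$, so it always has a neighbour outside its own part. The whole difficulty is resolvability. Two vertices sharing a part can be separated only by a third part, and since distances are at most $2$ the key fact is that a vertex $x$ receives a distance-$2$ entry only from a part contained entirely in its set of $G$-nonneighbours, that is, in $N_{\overline{G}}(x)$. In particular, twins (vertices with the same $\overline{G}$-neighbourhood up to themselves) can never be separated and must lie in distinct parts, and a universal vertex (of $G$-degree $n-1$) never receives a $2$, so it is resolved purely by which part contains it.

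I would then split into cases according to the structure of the non-isolated part of $\overline{G}$. If that part is a perfect matching --- equivalently $G$ is complete multipartite with all parts of size at most $2$ and at least three parts of size $2$ --- I merge one endpoint from each $2$-element class, together with a single universal vertex if one more saving is needed, into one part of size at least $4$, keeping the twin partners as singletons; each merged vertex $a_i$ is then separated from the rest by the singleton $\{b_i\}$ holding its own partner, which contributes a $2$, so this one large part already yields the three savings. Otherwise $\overline{G}$ has a component on at least three vertices, hence a vertex $u$ of $\overline{G}$-degree $2$; using $\{u\}$ as a pivot, the two $G$-nonneighbours of $u$ are each paired with a $G$-neighbour of $u$ and separated by $\{u\}$ (distance $2$ versus $1$), giving two savings, and a third non-twin pair with a free singleton witness is extracted from the remaining non-isolated vertices guaranteed by the count of at least five.

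The main obstacle is exactly the interaction of the universal vertices with the scarcity of witnesses. Because a universal vertex carries no distance-$2$ information, it can be merged with a vertex $x$ only when some part lies entirely inside $N_{\overline{G}}(x)$, which typically forces a specific singleton to be reserved; in the tightest small configurations (for instance $G=\overline{P_5\cup 2K_1}$ at $n=7$) the three savings can only be realised by a carefully tailored partition rather than by three independent pairs. Controlling this bookkeeping --- showing that the bound of at most $n-5$ universal vertices always leaves enough non-universal structure and enough free singleton witnesses to realise three simultaneous savings --- is the delicate heart of the argument, and is where the case analysis on the component structure of $\overline{G}$ (matching-only, a single long path or cycle, or several components) does the real work.
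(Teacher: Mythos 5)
Your setup is sound and matches the paper's: pass to $\overline{G}$, note $\Delta(\overline{G})\le 2$ so that $\overline{G}$ is a union of isolated vertices, paths and cycles with at least five non-isolated vertices, observe ${\rm diam}(G)=2$, and use the key fact that a distance-$2$ entry for a vertex $x$ can only come from a part contained in $N_{\overline{G}}(x)$. Your treatment of the case where every non-trivial component of $\overline{G}$ is a $K_2$ (merge one endpoint of each of three copies together with a fourth vertex, witnessed by the three singleton partners) is essentially the paper's corresponding case and is correct. The gap is in the remaining case, where $\overline{G}$ has a component of order at least $3$. Your pivot construction around a vertex $u$ of $\overline{G}$-degree $2$ yields only two savings, and you never actually produce the third; you explicitly concede that for configurations such as $G=\overline{P_5+2K_1}$ with $n=7$ ``three independent pairs'' fail and a ``carefully tailored partition'' is needed, but you do not exhibit it. That concession is accurate: in that example, once the two pivot pairs $\{x_2,c\}$ and $\{x_4,d\}$ are formed, the only unmerged vertices are $u=x_3$ and the two universal vertices; no remaining part lies inside $N_{\overline{G}}(x_3)=\{x_2,x_4\}$, and universal vertices receive no $2$'s at all, so any third merge among these vertices leaves an unresolved pair. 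The ``delicate heart of the argument,'' as you call it, is exactly what is left unproved.

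What closes the gap in the paper is a partition of a different shape in this case. When $\overline{G}$ has a single non-trivial component (which then has at least five vertices and hence contains a path $x_1x_2x_3x_4x_5$), one takes the single part $\{x_1,x_3,x_5,z\}$ of size four, resolved by the two singletons $\{x_2\}$ and $\{x_4\}$, whose distance pairs $(2,1)$, $(2,2)$, $(1,2)$, $(1,1)$ are pairwise distinct --- three savings from one part, with no need for a third independent pair. When $\overline{G}$ has at least two non-trivial components, one of order at least $3$, the parts $\{x_1,y_1,z\}$ and $\{x_3,t\}$, witnessed by the singletons $\{x_2\}$ and $\{y_2\}$, do the job. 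Until you supply explicit constructions of this kind for the component-of-order-at-least-three case, the lemma is not proved.
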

\begin{proof}
	Observe that the complement $\overline{G}$ of $G$ is a (non-necessarily connected) graph with vertices of degree $0$, $1$ or $2$.
	Thus, the components of $\overline{G}$ are either isolated vertices,
	or paths of order at least 2,
	or cycles of order at least 3.
	By hypothesis, $G$ has at most $n-5$ vertices of degree $n-1$, therefore $\overline{G}$ has at least $5$ vertices of degree $1$ or $2$.
	We distinguish three cases.
	
	\vspace{.4cm}\noindent \textbf{Case 1}: $\overline{G}$ has only one non-trivial component.
	In such a case, $\overline{G}$ has al least a (non-necessarily induced)  subgraph isomorphic to $P_5$.
	Let $x_1$, $x_2$, $x_3$, $x_4$ and $x_5$ be the vertices of this path, where $x_ix_{i+1}\in E(\overline{G})$ for $i=1,2,3,4$.
	Let $z\notin \{  x_1,x_2,x_3,x_4,x_5 \}$.
	Consider the partition:
	$$\Pi =
	\{\{ x_1,x_3,x_5,z\} \} \cup
	\{ \{ v \} : v\notin  \{  x_1,x_3,x_5, z \} \}.
	$$
	We claim that $\Pi$ is an RD-partition of $G$  (see Figure~\ref{fig1} (a)).
	Indeed,
	if $S_1=\{ x_2 \}$ and $S_2=\{ x_4 \}$, then
	$r(x_1|\Pi)=(2,1,\dots)$, $r(x_3|\Pi)=(2,2,\dots)$, $r(x_5|\Pi)=(1,2,\dots)$, $r(z|\Pi)=(1,1,\dots)$.
	Moreover, $x_3$ is adjacent in $G$ to any vertex $w\notin  \{  x_1,x_2,x_3,x_4,x_5, z \}$, that exists because the order of $G$ is at least 7.
	Therefore, $\Pi$ is an RD-partition of $G$.
	Thus, $\eta_p (G)\le n-3$.

	\begin{figure}[ht]
		\begin{center}
			\includegraphics[width=0.7\textwidth]{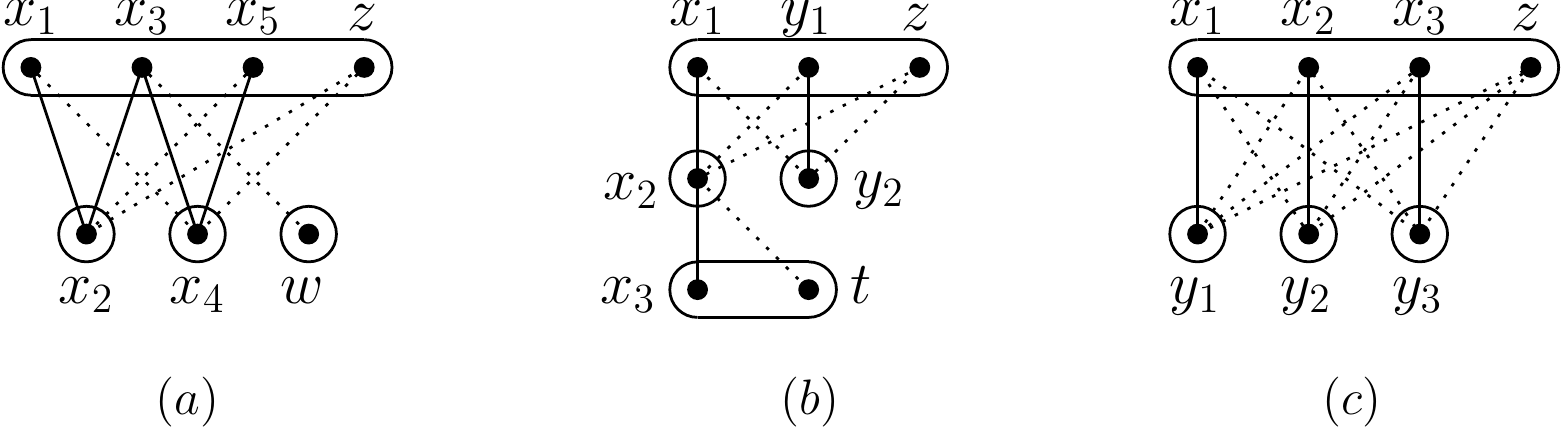}
			\caption{Solid (resp. dotted) lines mean adjacent (resp. non-adjacent) vertices in $\overline{G}$.
			}
			\label{fig1}
		\end{center}
	\end{figure}

	\vspace{.4cm}\noindent \textbf{Case 2}: $\overline{G}$ has at least two non-trivial components and one of them has order at least  $3$.
	If there is only one component of order $\ge 3$, say $C_1$, then there is at least a component of order $2$, say $C_2$.
	Otherwise, there are two components, say $C_1$ and $C_2$, of order at least $3$.
	In both cases,  we may assume that  $x_1,x_2,x_3$ are vertices of $C_1$ and $y_1,y_2$ are vertices of $C_2$, such that $x_1x_2\in E(\overline{G})$, $x_2x_3\in E(\overline{G})$, $y_1y_2\in E(\overline{G})$.
	Since $n\ge 7$, we may assume that there are two more vertices $z$ and $t$ such that at least one of them, say $z$, is not adjacent to $y_2$ in $\overline{G}$.
	
		Consider the partition:
	$$\Pi =
	\{ \{ x_1,y_1,z\} , \{ x_3,t\}\} \cup
	\{ \{ v \} : v\notin  \{  x_1,x_3,y_1,t, z \} \}.
	$$
	We claim that $\Pi$ is an RD-partition of $G$ (see Figure~\ref{fig1} (b)). Indeed,
	recall that two vertices are at distance 2 in $G$ whenever they are adjacent in $\overline{G}$, and they are at distance $1$ in $G$ whenever they are not adjacent in $\overline{G}$. Hence,
	if $S_1=\{ x_2 \}$ and $S_2=\{ y_2 \}$, then
	$r(x_1|\Pi)=(2,1,\dots)$, $r(y_1|\Pi)=(1,2,\dots)$, $r(z|\Pi)=(1,1,\dots)$, and  $r(x_3|\Pi)=(2,\dots)$, $r(t|\Pi)=(1,\dots)$.
	Therefore, $\Pi$ is an RD-partition of $G$ and $\eta_p (G)\le n-3$.

	\vspace{.4cm}\noindent \textbf{Case 3}: All non-trivial components of $\overline{G}$ have order $2$.
	Then,
	$\overline{G}$ has at least 3  components that are copies of $K_2$.
	Let $\{x_i,y_i\}$, for $i=1,2,3$,
	be the vertices of three of these copies, and
	let $z$ be a vertex not belonging to them.
	Then,
	$$\Pi =
	\{ \{ x_1,x_2,x_3,z\} \} \cup
	\{ \{ v \} : v\not= x_1,x_2,x_3,z \}
	$$
	is an RD-partition of $G$ (see Figure~\ref{fig1} (c)).
	Indeed,  if $S_1=\{ y_1 \}$, $S_2=\{ y_2 \}$ and $S_3=\{ y_3 \}$, then
	$r(x_1|\Pi)=(2,1,1,\dots)$, $r(x_2|\Pi)=(1,2,1,\dots)$, $r(x_3|\Pi)=(1,1,2,\dots)$  and  $r(z|\Pi)=(1,1,1,\dots)$.
	Therefore, $\eta_p (G)\le n-3$.
\end{proof}

\begin{prop}\label{prop.diametro2deltaN3}
Let $G$ be  a graph of order {\color{black}$n\ge 7$}, diameter $2$ and minimum degree   at least $n-3$. If  $\eta_p(G)=n-2$, then  $G\in \{  H_1,H_2,H_{3},H_{4},H_{5}\}$ (see Figure \ref{d2deltan-3}).
\end{prop}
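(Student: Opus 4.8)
The plan is to pass to the complement $\overline{G}$. Since $\delta(G)\ge n-3$, every vertex of $\overline{G}$ has degree at most $2$, so $\overline{G}$ is a disjoint union of isolated vertices, paths and cycles. The first key step is to bound the non-trivial part of $\overline{G}$. Because $\eta_p(G)=n-2>n-3$, the contrapositive of Lemma~\ref{lemma22} forces $G$ to have at least $n-4$ vertices of degree $n-1$. As a vertex of degree $n-1$ in $G$ is precisely an isolated vertex of $\overline{G}$, it follows that $\overline{G}$ has at most four non-isolated vertices. This is where the real work is concentrated: Lemma~\ref{lemma22} cuts the problem down to a bounded configuration, after which the argument becomes a finite enumeration.

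Next I would list all possibilities for the non-trivial components of $\overline{G}$ subject to having at most four non-isolated vertices, each of degree $1$ or $2$. Since the diameter of $G$ is $2$ we have $G\ne K_n$, hence $\overline{G}$ has at least one edge, and the only candidates for the non-trivial part are $P_2$, $P_3$, $C_3$, $P_4$, $C_4$ and $2K_2$. For each candidate I would return to $G$ by complementation: writing $\overline{G}$ as the non-trivial part $H$ together with the isolated vertices (an empty graph on $m$ vertices, $m\in\{n-2,n-3,n-4\}$), we obtain $G=K_m\vee \overline{H}$, since the complement of a disjoint union is the join of the complements.

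Computing these complements gives $\overline{P_2}=\overline{K_2}$, $\overline{P_3}=K_2+K_1$, $\overline{C_3}=\overline{K_3}$, $\overline{P_4}=P_4$, $\overline{C_4}=2K_2$ and $\overline{2K_2}=C_4$. Thus the six cases yield respectively the complete split graph $K_{n-2}\vee\overline{K_2}$ and the graphs $K_{n-3}\vee(K_2+K_1)=H_1$, $K_{n-3}\vee\overline{K_3}=H_2$, $K_{n-4}\vee P_4=H_4$, $K_{n-4}\vee 2K_2=H_5$ and $K_{n-4}\vee C_4=H_3$. Finally I would discard the first case: by Theorem~\ref{etann-1}(2) the graph $K_{n-2}\vee\overline{K_2}$ has $\eta_p=n-1$, so it cannot occur under the hypothesis $\eta_p(G)=n-2$. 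The remaining five graphs are exactly $H_1,\dots,H_5$, and Proposition~\ref{propida} confirms that each of them satisfies $\eta_p=n-2$; hence $G\in\{H_1,H_2,H_3,H_4,H_5\}$.

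The substantive obstacle is the first step, where the degree count supplied by Lemma~\ref{lemma22} reduces the problem to finitely many small complements. The only remaining care is to ensure the enumeration is complete and that each complement is correctly identified, so that no admissible $\overline{G}$ is overlooked and no spurious graph slips in; everything past that point is routine bookkeeping.
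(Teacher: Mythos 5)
Your argument is correct and is essentially the paper's proof viewed in the complement: the paper also invokes Lemma~\ref{lemma22} to get at least $n-4$ vertices of degree $n-1$, splits on $|\Omega|\in\{n,n-2,n-3,n-4\}$, and enumerates the induced subgraph on $V(G)\setminus\Omega$ (degrees $1$ or $2$ there), which is exactly your list of non-trivial components of $\overline{G}$ after complementation. The enumeration and the exclusion of $K_n$ and $K_{n-2}\vee\overline{K_2}$ match the paper's Cases 1--3, so this is the same proof in equivalent form.
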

\begin{proof}
Let $\Omega\subseteq V(G)$ be the set of vertices of $G$ of degree $n-1$, which according to
Lemma \ref{lemma22} contains at least $n-4$ vertices.
We distinguish cases depending on the cardinality of $\Omega$.

\begin{figure}[!t]
\begin{center}
\includegraphics[width=0.96\textwidth]{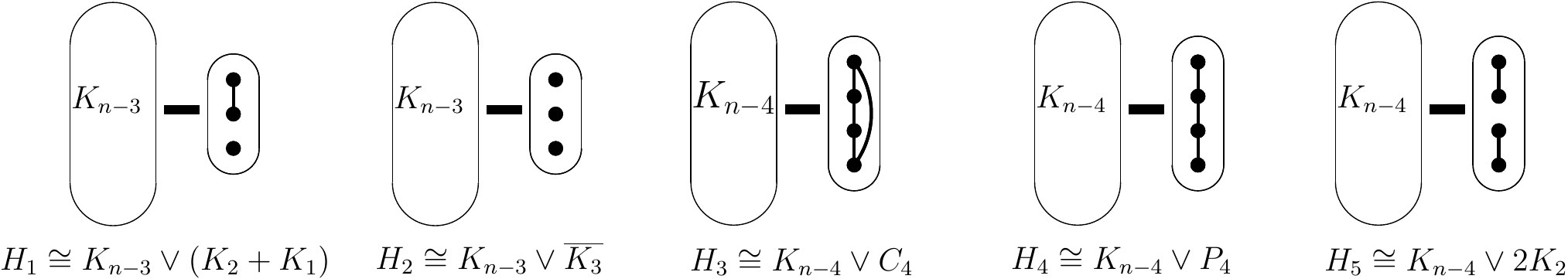}
\caption{Graphs of order $n\ge7$, diameter ${\rm diam}(G)=2$ and minimum degree $\delta(G)\ge n-3$ such that $\eta(G)=n-2$.}
\label{d2deltan-3}
\end{center}
\end{figure}

\noindent \textbf{Case 1}:  $|\Omega|\ge n-2$.
If $|\Omega|= n$, then $G\cong K_n$ and thus $\eta_p(G)=n$.
Case  $|\Omega|=n-1$ is not possible.
If $|\Omega|=n-2$, then $G\cong K_{n-2} \vee \overline{K_2}$, and according to Theorem \ref{etann-1}(2), $\eta_p(G)=n-1$.

\vspace{.2cm}
\noindent \textbf{Case 2}: $|\Omega|=n-3$.
Let $F$ be the subgraph of order 3 induced by $V(G)\setminus \Omega$, i.e., $F=G[V(G)\setminus \Omega]$. Notice that  $|E(F)|\le1$.
If $|E(F)|=1$, then  $G\cong H_1$.
Otherwise, if  $|E(F)|=0$, then $G\cong H_2$.

\vspace{.2cm}
\noindent \textbf{Case 3}: $|\Omega|=n-4$.
Consider the graph of order 4, $F=G[V(G)\setminus \Omega]$.
Note that all vertices of $F$ have degree either 1 or 2.
There are thus three possibilities.
If $F\cong C_4$, then $G\cong H_{3}$.
If $F\cong P_4$, then $G\cong H_{4}$.
If $F\cong 2K_2$, then $G\cong H_{5}$.
\end{proof}

\begin{prop}\label{prop.diametro2delta2}
Let $G$ be a graph of order {$n\ge 7$}, diameter $2$ and minimum degree  at most $n-4$.
If $\eta_p(G)=n-2$, then $G\in \{ H_{6},H_{7},H_8,H_9,H_{10},H_{11},H_{12},H_{13},H_{14}\}$  (see Figure \ref{d2delta12}).
\end{prop}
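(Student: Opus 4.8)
The plan is to first pin down the degree sequence, then reduce $G$ to a join of a small complete graph with a sparse remainder, and finally run a case analysis in which every graph outside the target list is eliminated by exhibiting a resolving dominating partition of cardinality $n-3$. The starting point is Lemma~\ref{lem.fusion} with $k=3$, which is legitimate since $n\ge 7$: if $G$ had a vertex of degree $d$ with $3\le d\le n-4$, then $\eta_p(G)\le n-3$, contradicting $\eta_p(G)=n-2$. Hence every vertex has degree in $\{1,2,n-3,n-2,n-1\}$, and since $\delta(G)\le n-4<n-3$ the minimum degree is $1$ or $2$. Writing $\Omega$ for the set of universal vertices and $p=|\Omega|$, the universal vertices are mutually adjacent and adjacent to everything, so $G=K_p\vee G'$ with $G'=G-\Omega$, where $G'$ has no universal vertex. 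A degree-$2$ vertex lies in $G'$ and is adjacent to all of $\Omega$, forcing $p\le 2$; a leaf is adjacent only to its support, which must be universal because ${\rm diam}(G)=2$, forcing $p=1$. Thus the analysis splits cleanly into $\delta(G)=1$ (where $p=1$) and $\delta(G)=2$ (where $p\in\{0,1,2\}$).

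In the case $\delta(G)=1$ I would write $G=K_1\vee G'$ with universal vertex $w$; the leaves of $G$ are exactly the isolated vertices of $G'$ and, being a twin set of leaves, number at most $n-3$ by Lemma~\ref{hojcomp}(3). The non-isolated vertices of $G'$ have $G'$-degree in $\{1,n-4,n-3\}$, bounded above by $n-2-\ell$, where $\ell$ is the number of leaves; this tightly constrains the structure. The strategy is to show that whenever the non-leaf part of $G'$ is richer than a single edge or a single clique of twins --- for instance, two independent edges among the degree-$2$ vertices --- one can build an explicit resolving dominating partition of size $n-3$ (placing each leaf in its own part, pairing a leaf with a degree-$2$ vertex while keeping its matched partner available as a resolver, and merging the remaining markers), contradicting $\eta_p(G)=n-2$. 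The only survivors are $H_{12}\cong(\overline{K_{n-3}}+K_2)\vee K_1$ (with $\ell=n-3$) and $H_{13}\cong(K_{n-3}+\overline{K_2})\vee K_1$ (with $\ell=2$ and the remaining $n-3$ vertices inducing a complete twin set).

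For $\delta(G)=2$ I would analyse $G'=G-\Omega$, whose degrees are those of $G$ shifted down by $p$, splitting according to $p\in\{0,1,2\}$ and to the adjacency pattern among the degree-$2$ vertices; in each configuration that is not one of $H_6,H_7,H_8,H_9,H_{10},H_{11},H_{14}$ I would again produce a resolving dominating partition of cardinality $n-3$. Conversely, each listed graph contains a twin set large enough that Lemma~\ref{hojcomp} --- together with the refinement already used in the proof of Proposition~\ref{propida} for $H_{11}$ and $H_{14}$ --- yields $\eta_p\ge n-2$, so these graphs genuinely survive. Matching each surviving family with its picture in Figure~\ref{d2delta12} then finishes the classification.

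The main obstacle is not the structural bookkeeping but the resolving condition in the elimination constructions. Saving a part by merging vertices very easily creates two vertices with identical distance vectors: for example, two degree-$2$ vertices placed together with a universal vertex become indistinguishable, since both are at distance $1$ from $w$'s part and at distance $2$ from everything else. Consequently each size-$(n-3)$ partition must reserve the right singleton markers --- typically the private neighbours of the degree-$2$ vertices --- to break every such near-coincidence. Verifying that these markers are always available in the non-example families, while confirming that the nine listed graphs admit no such partition because of their large twin sets, is where the real work lies.
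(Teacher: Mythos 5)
Your overall plan---restrict the degree sequence via Lemma~\ref{lem.fusion} with $k=3$, then eliminate every graph outside the list by exhibiting an explicit RD-partition of cardinality $n-3$---is the same strategy the paper uses, and your join decomposition $G=K_p\vee G'$ by universal vertices is a reasonable alternative to the paper's organization (which instead fixes a vertex $u$ of degree $2$, or a leaf when no degree-$2$ vertex exists, and analyses the distance layers $D_1,D_2$ from $u$). However, your case split by $\delta(G)\in\{1,2\}$ is carried out incorrectly, and the error is not cosmetic: the graph $H_{14}\cong K_1\vee\bigl(K_1+(K_{n-2}-e)\bigr)$ has exactly one leaf, hence $\delta(H_{14})=1$, so it belongs to your first case; yet you assert that the only survivors of the $\delta(G)=1$ case are $H_{12}$ and $H_{13}$, and you list $H_{14}$ among the outcomes of the $\delta(G)=2$ analysis, where it cannot arise. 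Worse, your stated elimination criterion for the $\delta(G)=1$ case---discard $G$ whenever the non-leaf part of $G'$ is ``richer than a single edge or a single clique of twins''---would discard $H_{14}$, whose non-leaf part is $K_{n-2}$ minus one edge and is neither of those. Since $\eta_p(H_{14})=n-2$ by Proposition~\ref{propida}, any partition your criterion purports to build for it cannot actually be resolving and dominating, so the criterion itself is false as stated. The paper's corresponding step (Case~2 of its proof) handles this correctly by analysing the graph $J=\overline{G[\Omega]}$ on the non-leaf, non-universal vertices and showing that only $|L|\ge 4$ independent $J$-edges permit an $(n-3)$-partition, while $|L|\in\{0,2\}$ yields $K_1\vee(K_1+K_{n-2})$ and $H_{14}$ respectively.

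Beyond this, the proposal defers essentially all of the constructive work (``where the real work lies''), and that work is the bulk of the paper's proof: in Case~1.1 alone the paper needs three sub-constructions depending on whether the auxiliary vertices $y,t$ are leaves and from which support they hang, and in Case~1.2 the partitions depend delicately on $|N_1\cap N_2|$. A corrected version of your argument would need to redo that verification for each configuration of $G'$, paying particular attention to configurations like $K_{n-2}-e$ with one pendant vertex, where the obvious merges fail to resolve.
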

\begin{proof}
By Lemma \ref{lem.fusion} for $k=3$, we have that $\deg (w)\in \{ 1,2,n-3,n-2,n-1\}$, for every vertex $w\in V(G)$.
Hence, $\delta (G)\le 2$. We distinguish two cases.
\vspace{.1cm}

\noindent \textbf{Case 1}: \emph{There exists a vertex $u$ of degree $2$.}
Consider the subsets $D_1=N(u)=\{ x_1,x_2\}$ and $D_2=\{ v\in V(G): d(u,v)=2\}=V(G)\setminus N[u]$,  so that $|D_2|=n-3$.
\vspace{.1cm}


\noindent \textbf{Case 1.1}:
\emph{$G[D_{2}]$ is neither complete nor empty.}
Then, there exist three different vertices $r,s,t\in D_2$ such that $rs\in E(G)$ and $rt\notin E(G)$.
Let $y\in D_2\setminus \{ r,s,t \}$.
We distinguish cases taking into account whether or not  $y$ and $t$ are leaves.

\begin{itemize}

\item \emph{Both $y$ and $t$ are leaves hanging from the same vertex.}
 Assume that they hang from $x_1$. Let
$S_1=\{ u,y\}$ and  $S_2=\{ x_2,s,t \}$.
In such a case, $S_2$ resolves $S_1$, $\{r\}$ resolves the pair $\{ s,t \}$
and $S_1$ resolves the pairs $\{ x_2, s\}$ and $ \{ x_2, t\}$.
Therefore, $\Pi = \{ S_1,S_2  \}\cup \{ \{ w \} : w\notin S_1\cup S_2  \}$  is a resolving partition.
It can be easily checked that  $\Pi $ is also dominating.
Hence,  $\eta_p(G)\le n-3$, a contradiction.

\item \emph{Both $y$ and $t$ are leaves but not hanging from the same vertex, or neither $y$ nor $t$ are leaves.}
%
%
If both $y$ and $t$ are leaves but not hanging from the same vertex,
assume $x_1y\in  E$ and $x_2t\in E$. Let $S_1=\{ x_2,y \}$ and $S_2=\{ x_1,s,t\}$.
If neither $y$ nor $t$ are leaves
and $N(t)\not= \{ s,x_1 \}$, let $S_1=\{ x_2,y\}$ and $S_2=\{ x_1,s,t \}$.
If neither $y$ nor $t$ are leaves
and $N(t) = \{ s,x_1 \}$, let $S_1=\{ x_1,y\}$ and $S_2=\{ x_2,s,t \}$.
In all these cases,
$\{u\}$ resolves $S_1$,
$\{r\}$ resolves  $\{ s,t \}$,
and $\{u\}$ resolves any other pair from $S_2$.
Hence, $\Pi = \{ S_1,S_2  \}\cup \{ \{ w \} : w\notin S_1\cup S_2 \}$ is a resolving partition of $G$.
It can be easily checked that $\Pi$ is a dominating partition.
Thus, $\eta_p(G)\le n-3$, a contradiction.

\item \emph{Exactly one of the vertices $y$ or $t$ is a leaf.}
We may assume that the leaf hangs from $x_1$.
If $t$ is a leaf, then take $S_1=\{ x_1,y\}$ and  $S_2=\{ x_2, s,t \}$.
If $y$ is a leaf and $N(t)\not= \{x_1,s \} $  then take $S_1=\{ x_2,y\}$ and $S_2=\{ x_1,s,t\}$.
In both cases, $\{ r \}$ resolves  $\{ s,t \}$ and $\{ u \}$ resolves any other pair in either $S_1$ or $S_2$.
If $y$ is a leaf and $N(t)= \{x_1,s\}$  then take $S_1=\{ u,y\}$ and $S_2=\{ x_2,s,t\}$.
Then, $\{r\}$ resolves the pair $\{ s, t \}$, $S_1$ resolves the other pairs from $S_2$; and $S_2$ resolves $S_1$.
In all cases, $\Pi = \{ S_1,S_2  \}\cup \{ \{ w \} : w\notin S_1\cup S_2 \}$ is dominating partition.
Thus, $\eta_p(G)\le n-3$, a contradiction.
\end{itemize}

\begin{figure}[!ht]
\begin{center}
\includegraphics[width=0.8\textwidth]{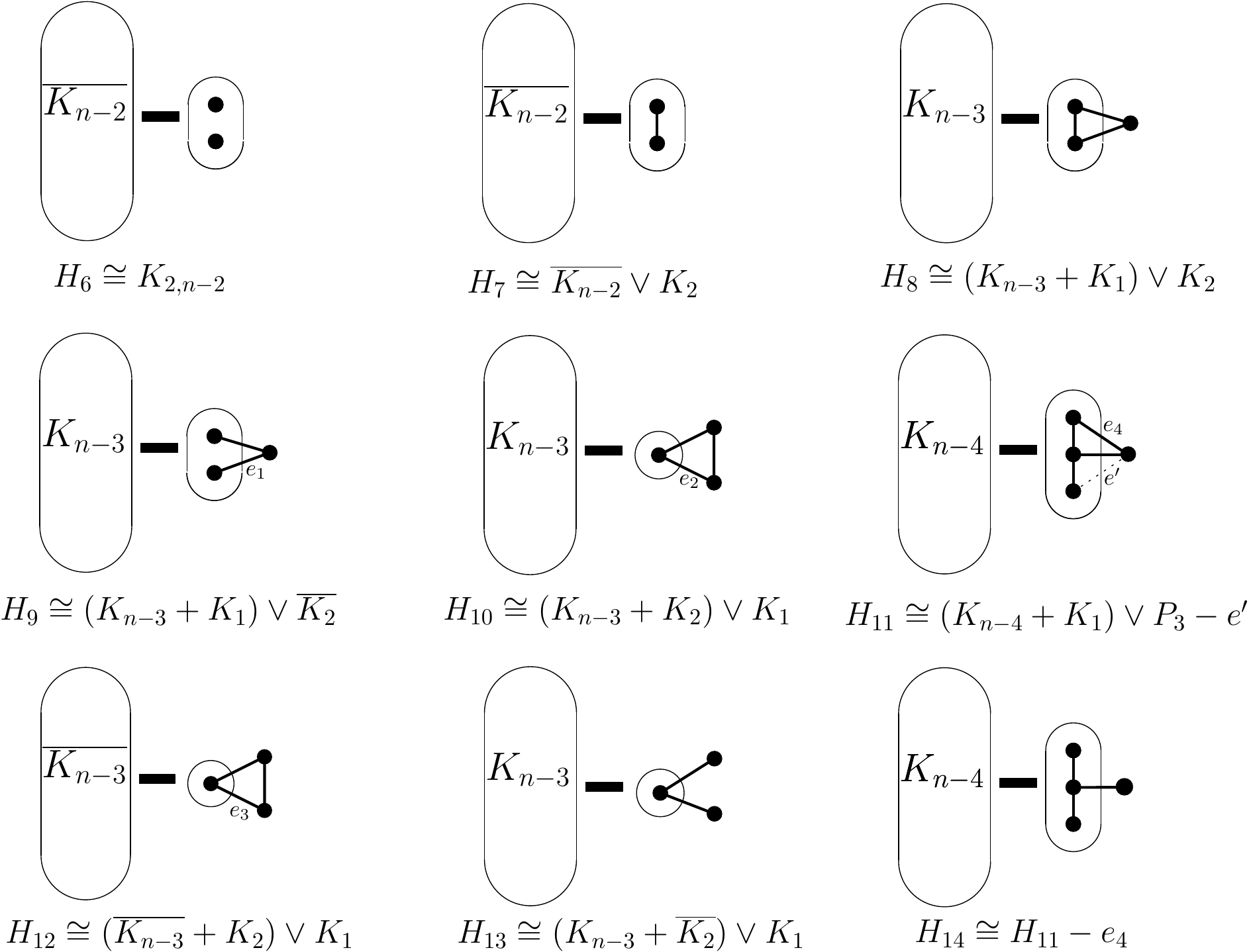}
\caption{Graphs of order $n\ge7$, diameter ${\rm diam}(G)=2$ and minimum degree $1\le\delta(G)\le 2$ such that $\eta(G)=n-2$.}
\label{d2delta12}
\end{center}
\end{figure}

\vspace{.2cm}		
\noindent  \textbf{Case 1.2}:  \emph{$G[D_{2}]$ is either complete or empty.}
Assume that $\deg(x_1) \le \deg(x_2)$.
Consider the subsets
$N_1=N(x_1)\cap D_2$ and $N_2=N(x_2)\cap D_2$. Observe that $N_1\cup N_2=D_2$, and the sets $N_1\setminus N_2$,  $N_1\cap N_2$ and $N_2\setminus N_1$ are pairwise disjoint.
Besides, $|N_2\setminus N_1|\ge |N_1\setminus N_ 2|$ because we have assumed $\deg (x_2)\ge \deg (x_1)$.
Notice also that $\deg(x_2) \ge \deg(x_1) \ge 2$, as otherwise ${\rm diam}(G)\ge3$.
We distinguish two cases.

\vspace{.2cm}
\noindent \textbf{(1.2.1)}: $\deg(x_1)= 2$.
Thus, $\deg(x_2)\geq ( |D_2|-1)+1\ge n-3$.

\begin{itemize}
\item
If $x_1x_2\in E$, then $N_1=\emptyset$ and $D_2=N_2$.
If $G[D_{2}]\cong K_{n-3}$, then $G\cong H_{10}$.
If $G[D_{2}]\cong \overline{K_{n-3}}$, then $G\cong H_{12}$.

\item
If $x_1x_2\notin E$, then
$|N_1|=1$  and $|N_2\setminus N_1|=n-4\ge 3$. If $G[D_{2}]\cong \overline{K_{n-3}}$,  then
${\rm diam}(G)\ge 3$. Hence, $G[D_{2}]\cong K_{n-3}$.
Consider $y \in N_1$ and $z_1, z_2 \in N_2\setminus N_1$.
Let $S_1=\{u, x_1\}$, $S_2=\{x_2, z_2\}$  and $S_3=\{y, z_1\}$
and consider the partition $\Pi = \{ S_1,S_2,S_3 \}\cup \{ \{ w \} : w\notin S_1\cup S_2 \cup S_3 \}$.
Then, $S_1$ resolves both $S_2$ and $S_3$; and $S_3$ resolves $S_1$. Moreover, $\Pi$ is a dominating
partition of $G$. Thus, $\eta_p(G)\le n-3$, a contradiction.

\end{itemize}

\vspace{.2cm}
\noindent \textbf{(1.2.2)}: $\deg(x_1)\ge n-3$.
Hence, $\deg(x_2) \ge \deg(x_1) \ge n-3$.
In such a case, $|N_1|\ge n-5$ and $|N_2|\ge n-5$, and so $n-7\le |N_1\cap N_2|\le n-3$.
We distinguish cases depending on the cardinality of $ |N_1\cap N_2|$.

\begin{itemize}

\item $ |N_1\cap N_2| = n-3$.
Then, $N_1=N_2=V(G)\setminus N[u]$.
If $x_1x_2\in E$, then $G\cong H_8$ if $G[D_{2}]\cong K_{n-3}$, and $G\cong H_7$ if $G[D_{2}]\cong \overline{K_{n-3}}$.
If $x_1x_2\notin E$, then $G\cong H_9$ if $G[D_{2}]\cong K_{n-3}$, and $G\cong H_6$ if $G[D_{2}]\cong \overline{K_{n-3}}$.

\item $ |N_1\cap N_2| = n-4$.
Then, $|N_2\setminus N_1|+|N_1\setminus N_2|=1$.
Thus,  $|N_2\setminus N_1|=1$, $|N_1\setminus N_2|=0$ and $ |N_1\cap N_2| \ge 3$.
If $G[D_{2}]\cong \overline{K_{n-3}}$, then ${\rm diam} (G)\ge 3$, a contradiction.
If $G[D_{2}]\cong K_{n-3}$ and $x_1x_2\in E$, then $G\cong H_{11}$.
If $G[D_{2}]\cong K_{n-3}$ and $x_1x_2\notin E$, then let
$y_1,y_2,y_3\in N_1\cap N_2$ and let $z\in N_2\setminus N_1$.
Consider $S_1=\{ u, y_1\}$, $S_2=\{ x_2, y_2\}$, $S_3=\{ z, y_3\}$ and
let  $\Pi = \{ S_1, S_2, S_3 \}\cup \{ \{ w \} : w\notin S_1\cup S_2 \cup S_3 \}$.
Then, $\{x_1\}$ resolves both $S_2$ and $S_3$, and $S_3$ resolves $S_1$.
It is easy to check that it is a dominating partition. Therefore, $\eta_p(G)\le n-3$, a contradiction.

\item $ |N_1\cap N_2| = n-5$.
Then, $|N_2\setminus N_1|+|N_1\setminus N_2|=2$ and $ |N_1\cap N_2| \ge 2$.
Let $y_1,y_2\in (N_2\setminus N_1)\cup (N_1\setminus N_2)$ and $z_1,z_2\in N_1\cap N_2$,
and let  $S_1=\{ y_1,z_1\}$, $S_2=\{ y_2,z_2\}$ and $S_3=\{ u,x_1 \}$.
Then,
$\Pi = \{ S_1, S_2, S_3 \}\cup \{ \{ w \} : w\notin S_1\cup S_2 \cup S_3 \}$ is an RD-partition of $G$.
Indeed, $S_1$ resolves $S_3$ and, for $i\in\{ 1,2\}$,
$S_i$ is resolved by  $S_1$ if  $y_i\in N_2\setminus N_1$ and
$S_i$ is resolved by $\{ x_2 \}$ if $y_i\in N_1\setminus N_2$.
Besides, $\Pi$ is dominating.
Hence, $\eta_p(G)\le n-3$, a contradiction.

\item $ |N_1\cap N_2| \in \{ n-6, n-7 \}$.
In such a case, $|N_2\setminus N_1|+|N_1\setminus N_2|\in \{ 3,4\}$.
Since $|N_2\setminus N_1|\ge |N_1\setminus N_2|$, we have $|N_2\setminus N_1|\ge 2$.
Since $ \deg (x_1)\ge n-3$, we have $|N_1|\ge n-5\ge 2$.
Let $y_1,y_2\in N_1$ and $z_1,z_2\in N_2\setminus N_1$.
If  $S_1=\{ u,x_1\}$, $S_2=\{ y_1,z_1\}$ and $S_3=\{ y_2,z_2\}$,
and   $\Pi = \{ S_1,S_2,S_3 \}\cup \{ \{ w \} : w\notin S_1\cup S_2 \cup S_3 \}$,
then $S_1$ resolves both $S_2$ and $S_3$, and $S_2$ resolves $S_1$.
Moreover, $\Pi$ is a dominating partition.
Therefore, $\eta_p(G)\le n-3$, a contradiction.
\end{itemize}

\vspace{.2cm}
\noindent \textbf{Case 2}: \emph{There exists at least one vertex $u$ of degree $1$ and there is no vertex of degree 2.}
Since ${\rm diam}(G)=2$,  the neighbor $v$ of $u$ satisfies $\deg(v)=n-1$.
Let $\Omega$ be the set of vertices different from $v$ that are not leaves.
Notice that there are at most two vertices of degree 1 in $G$, as otherwise all vertices in $\Omega$ would have degree between  $3$ and  $n-4$, contradicting the assumption made at the beginning of the proof.

If there are exactly two vertices of degree 1, then $|\Omega|=n-3$.
In such a case, as for every vertex $w\in\Omega$, $\deg(w)\ge n-3$,  $\Omega$ induces a complete graph in $G$, and hence  $G\cong H_{13}$.

Suppose next that $u$ is the only vertex of degree 1, which means that  $\Omega$ contains $n-2$ vertices,
all of them  of degree $n-3$ or $n-2$.
Consider the (non-necessarily connected) graph   $J=\overline{G[\Omega]}$.
Certainly,  $J$ has $n-2$ vertices, all of them  of degree either $0$ or $1$.
Let $L$ denote the set of vertices of degree $1$ in $J$.
Observe that  the cardinality of $L$ must be even.
We distinguish three cases.
\begin{itemize}

\item If $|L|=0$, then $G\cong K_1	\vee (K_1 +K_{n-2})$, and by
Theorem~\ref{etann-1} we have $\eta_p(G)=n-1$,
a contradiction.

\item If $|L|=2$, then $G\cong H_{14}$.

\item If $|L|\ge 4$,
let  $\{x_1,x_2,x_3,x_4\}\subseteq L$ such that $x_1x_2$ and $x_3x_4$ are edges of $J$, and let $y\in \Omega \setminus \{x_1,x_2,x_3,x_4\}$.
Consider the partition $ \Pi =\{ S_1,S_2\} \cup \{ \{ w \} : w\notin S_1\cup S_2 \}$,
where  $S_1= \{v, x_1\}$, $S_2=\{u, x_3, y\}$.
Observe that $\{x_2\}$ resolves $S_1$, $\{u, x_3\}$ and  $\{u,y\}$,
and  $\{x_4\}$ resolves $\{x_3, y\}$.
Besides, $\Pi$ a is dominating partition.
Therefore, $\eta_p(G)\le n-3$, a contradiction.
\end{itemize}
\vspace{-.78cm}\end{proof}

\subsection{Case diameter 3}

We consider the case  $\eta_p(G)= n-2$ and ${\rm diam}(G)=3$.

\begin{prop}\label{prop.diametro3}
Let $G$ be a graph of order $n\ge 7$ and diameter $3$.
 If $\eta_p(G)=n-2$, then  $G \in \{  H_{15}, H_{16}, H_{17}\}$ (see Figure \ref{d3}).
\end{prop}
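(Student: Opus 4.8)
The plan is to assume $\eta_p(G)=n-2$ for a graph $G$ of order $n\ge 7$ and diameter $3$, and to prove $G\in\{H_{15},H_{16},H_{17}\}$ by ruling out every other configuration through an explicit RD-partition of cardinality $n-3$. Note first that Lemma \ref{d3etan-2}(1) already gives $\eta_p(G)\le n-2$, so the entire force of the hypothesis is that $G$ admits \emph{no} RD-partition of cardinality $n-3$; this is exactly what each construction below will contradict.

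First I would pin down the degree sequence. Applying Lemma \ref{lem.fusion} with $k=3$ (legitimate since $n\ge 7=2\cdot 3+1$), any vertex $w$ with $3\le\deg(w)\le n-4$ would yield $\eta_p(G)\le n-3$; hence every vertex has degree in $\{1,2,n-3,n-2,n-1\}$. Since $\mathrm{diam}(G)=3$ there is no universal vertex, so in fact $\deg(w)\in\{1,2,n-3,n-2\}$ for all $w$. I would then record two structural facts. A vertex of degree $n-2$ has eccentricity $2$, because its unique non-neighbour is reached in two steps through any of that non-neighbour's (necessarily common) neighbours; consequently every vertex of eccentricity $3$ has degree in $\{1,2,n-3\}$. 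Moreover, a vertex $u$ of eccentricity $3$ and degree $n-3$ forces its two non-neighbours to be a leaf hanging from the other one: a vertex at distance $3$ from $u$ must have all its neighbours among the non-neighbours of $u$, hence degree $1$. Combining these with $n\ge 7$ shows that a minimum degree $\ge n-3$ is incompatible with the existence of an eccentricity-$3$ vertex, so $\delta(G)\in\{1,2\}$.

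Next I would fix a diametral vertex $u$ (so $\mathrm{ecc}(u)=3$) and work with its distance layers $D_1=N(u)$, $D_2$, $D_3$, all nonempty, splitting into the cases $\deg(u)\in\{n-3,2,1\}$ dictated above and refining each by the sizes of $D_1,D_2,D_3$, by which low-degree vertices are leaves versus degree-$2$ vertices, and by the adjacencies between consecutive layers. In every configuration that is not one of $H_{15},H_{16},H_{17}$ I would exhibit an RD-partition of cardinality $n-3$: exactly as in Lemma \ref{lem.fusion} and in the diameter-$2$ propositions, this is done by forming two or three parts of size two or three that pair a vertex of $D_1$ (or $u$ itself) with a vertex farther from $u$, so that $\{u\}$ or another small part resolves the pair through the distance-$0/1/2/3$ pattern it creates, leaving singletons elsewhere; one then checks that each part contains a vertex with a neighbour in another part, making the partition dominating. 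The surviving configurations are matched to $H_{15}$, $H_{16}$ and $H_{17}$ (see Figure \ref{d3}) by reading off their adjacencies and using the facts about degree-$(n-2)$ and degree-$(n-3)$ vertices to fix the remaining edges; the borderline case $\deg(u)=n-3$, where $D_2$ and $D_3$ are forced to be singletons and a leaf appears, is what produces $H_{15}$.

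The main obstacle is the sheer number of configurations together with the need to verify, in each eliminated case, that two competing requirements hold simultaneously: the paired parts must be mutually resolving (which relies on the inserted resolver together with the $0/1/2/3$ distances induced by $u$), while at the same time no vertex may have all its neighbours inside its own part (the domination condition), a constraint that is delicate precisely for leaves and for degree-$2$ vertices. Guaranteeing that the case list is exhaustive while keeping every construction valid is where the real work lies.
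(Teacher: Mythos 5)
Your opening reductions are sound and coincide with the paper's: Lemma \ref{lem.fusion} with $k=3$ (valid since $n\ge 7$) confines all degrees to $\{1,2,n-3,n-2,n-1\}$, the diameter excludes degree $n-1$, a degree-$(n-2)$ vertex has eccentricity $2$, and a counting argument on a diametral pair $u,v$ (their neighbourhoods are disjoint) rules out $\{\deg u,\deg v\}=\{2,n-3\}$ and $\{n-3,n-3\}$. The paper proceeds in exactly this way, then disposes of $\deg u=\deg v=2$ by two explicit partitions and roots the layer decomposition $D_1,D_2,D_3$ at a leaf; your plan for the rest --- subcases by $|D_i|$, by the graphs induced on the layers and the cross-layer adjacencies, eliminating each non-exceptional configuration with an RD-partition of cardinality $n-3$ made of a few two- or three-element parts resolved by $\{u\}$ or a nearby singleton --- is also the paper's strategy.

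The gap is that the proposal never executes this plan, and the execution is the proposition. As you yourself note, ``guaranteeing that the case list is exhaustive while keeping every construction valid is where the real work lies'' --- but that work is precisely what a proof of this statement consists of. Nothing in the proposal establishes that the surviving configurations are exactly $H_{15}$, $H_{16}$, $H_{17}$: you locate $H_{15}$ (degree-$(n-3)$ diametral vertex, forcing a pendant leaf at distance $3$) but never exhibit the configuration giving $H_{16}$ (in the paper: $|D_1|=|D_2|=1$ and $|D_3|=n-3\ge 4$ with every vertex of $D_3$ of degree $n-3$, so $D_3$ is a clique attached to a cut vertex), nor the two distinct configurations that both collapse to $H_{17}$, and you do not show that every other combination of layer sizes, induced subgraphs and leaf placements admits a partition satisfying resolution and domination simultaneously. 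The paper needs roughly a dozen such constructions, including delicate ones (e.g.\ when $|D_3|=1$, $\deg v=1$ and $D_2$ is neither complete nor empty, it must re-root the layers at a leaf of $D_2$ or treat $n=7$ separately). Without these verifications the argument is a correct outline of the paper's proof rather than a proof.
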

\begin{proof}
By Lemma~\ref{lem.fusion} (case $k=3$), every vertex has degree $1$, $2$, $n-3$, $n-2$ or $n-1$.
Let $u$ and $v$ be two vertices such that $d(u,v)=3$.
In such a case, both $u$ and $v$ have degree at most  $n-3$.

Notice that on the one hand,  it is not possible to have neither $\{ \deg (u), \deg (v) \}=\{ 2, n-3 \}$ nor $\{ \deg (u), \deg (v) \}=\{ n-3 \}$, as otherwise we would have more than $n$ vertices because $N(u)\cap N(v)=\emptyset$, a contradiction.

On the other hand, if $\deg (u)=\deg (v)= 2$, then $\eta_p(G)\le n-3$.
Indeed, let $ux_1x_2v$ be a $(u,v)$-path and let $D_i = \{ z : d(u,z)=i\}$, for $i\in \{1,2,3 \}$.
Since $ |D_1|=2$, we may assume that $D_1=\{ x_1,y_1\}$.
If $|D_2|\ge 2$, let $y_2\in D_2\setminus \{ x_2 \}$.
If $x_1 y_2\in E$, let $S_1=\{ x_1, x_2\}$ and $S_2=\{ y_1,y_2,v\}$.
If $x_1 y_2\notin E$, then $y_1 y_2\in E$, and consider $S_1=\{ y_1,x_2\}$ and $S_2=\{ x_1, y_2, v\}$.
If $|D_2|=1$, then $v$ has a neighbor $z\in D_3$, so that $z$ must be also adjacent to $x_2$.
Let  $S_1=\{ x_1, x_2, v \}$ and $S_2=\{ y_1,z\}$.
In all cases, $\Pi =\{ S_1, S_2 \}\cup \{  \{ w \} : w\notin S_1\cup S_2\}$
is an RD-partition, because it is dominating and $\{ u \}$ resolves both $S_1$ and $S_2$.
Hence,  $\eta_p(G)\le n-3$, a contradiction.

Therefore, we may assume that  $\deg (u)=1$  and that every  vertex at distance $3$ from $u$ has degree $1$, $2$ or $n-3$.
Let $D_i=\{ x\in V(G) : d(u,x)=i \}$, for $i=1,2,3$.
Thus, $|D_1|=1$. Let $D_1=\{ w \}$.
We distinguish cases, depending on the cardinality of $D_3$.
\vspace{2mm}

\begin{figure}[!ht]
\begin{center}
\includegraphics[width=0.65\textwidth]{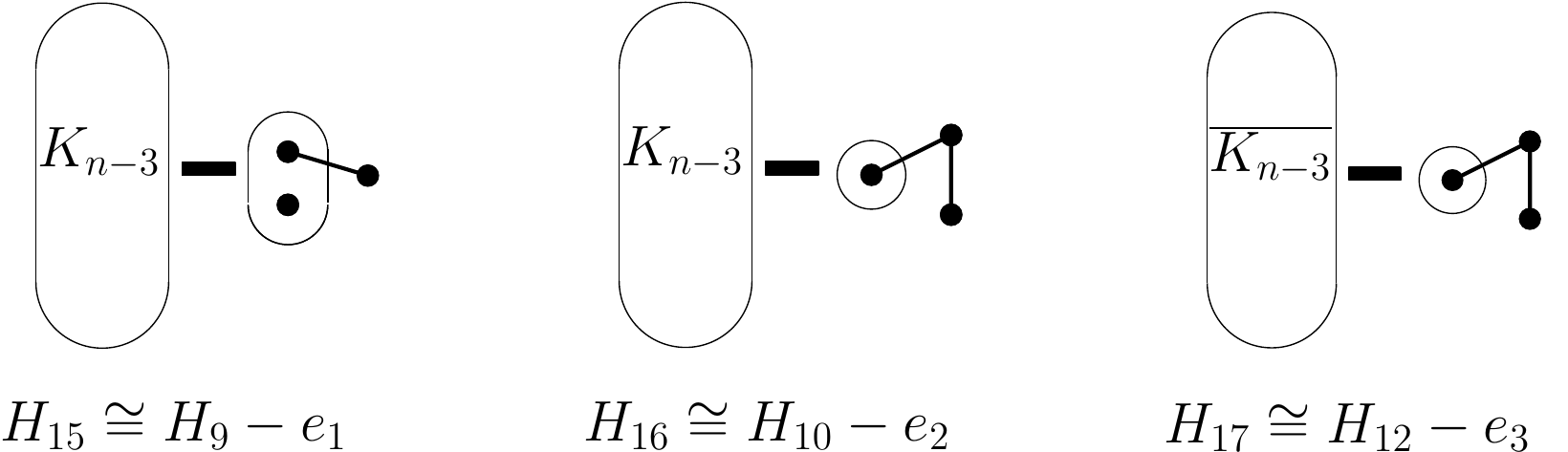}
\caption{Graphs of order $n\ge7$ and diameter $3$ such that $\eta(G)=n-2$.}
\label{d3}
\end{center}
\end{figure}

{\bf Case 1:} $|D_3|\ge 3$.
Then, $\deg (w)\le n-4$, and therefore,  $\deg (w)=2$, $|D_1|=|D_2|=1$ and  $|D_3|=n-3\ge 4$.
Let $x$ be the only vertex in $D_2$.
Notice that every vertex of $D_3$ is adjacent to $x$.
We distinguish cases taking into account the degree of the vertices in $D_3$.

\begin{itemize}

\item \emph{There is a vertex of degree  $n-3$ in $D_3$.}
A vertex in  $D_3$ of degree $n-3$ must be adjacent to all the other vertices of $D_3$.
Therefore,  there is exactly one vertex of degree $n-3$ in $D_3$ or every vertex in $D_3$ has degree $n-3$.
In the last case, that is, if every vertex in $D_3$ has degree $n-3$, then $D_3$ is a clique and $G\cong H_{16}$.
Otherwise, let $y_1$ be the only vertex in $D_3$ of degree $n-3$.
Any other vertex in $D_3$ has degree $2$, since it is adjacent to $x$ and to $y_1$.
Let $y_2,y_3,y_4\in D_3\setminus \{ y_1 \}$.
Consider $S_1=\{ y_1,y_2\}$ and $S_2=\{ w,x,y_3\}$.
Then,
$\Pi =\{ S_1, S_2 \} \cup \{ \{ z \} :  z\notin S_1\cup S_2 \}$
is an RD-partition of $G$.
Indeed, it is dominating partition, $\{u \}$ resolves $S_2$ and $\{ y_4 \}$ resolves $S_1$
(see Figure~\ref{figpropdiam3}(a)).
Thus, $\eta_p(G)\le n-3$, a contradiction.

\item \emph{Every vertex in $D_3$ has degree 1 or 2, and  at least one of them has degree 2.}
Then, $G[D_3]$ contains at least a copy of $K_2$.
Let $y_1$ and $y_2$ be the vertices of such a copy of $K_2$, and take $y_3\in D_3\setminus \{ y_1,y_2 \}$.
Consider $S_1=\{ w,y_1 \}$, $S_2=\{ x,y_2\}$ and $S_3=\{ u,y_3 \}$.
It is straightforward to check that
$\Pi =\{ S_1, S_2, S_3 \} \cup \{ \{ z \} :  z\notin S_1\cup S_2 \cup S_3 \}$
is an RD-partition of $G$ (see Figure~\ref{figpropdiam3}(b)), and thus $\eta_p(G)\le n-3$, a contradiction.

\item \emph{Every vertex in $D_3$ has degree 1.}
Then, $D_3$ induces an empty graph and $G\cong H_{17}$.

\end{itemize}

\begin{figure}[ht]
\begin{center}
\includegraphics[width=0.85\textwidth]{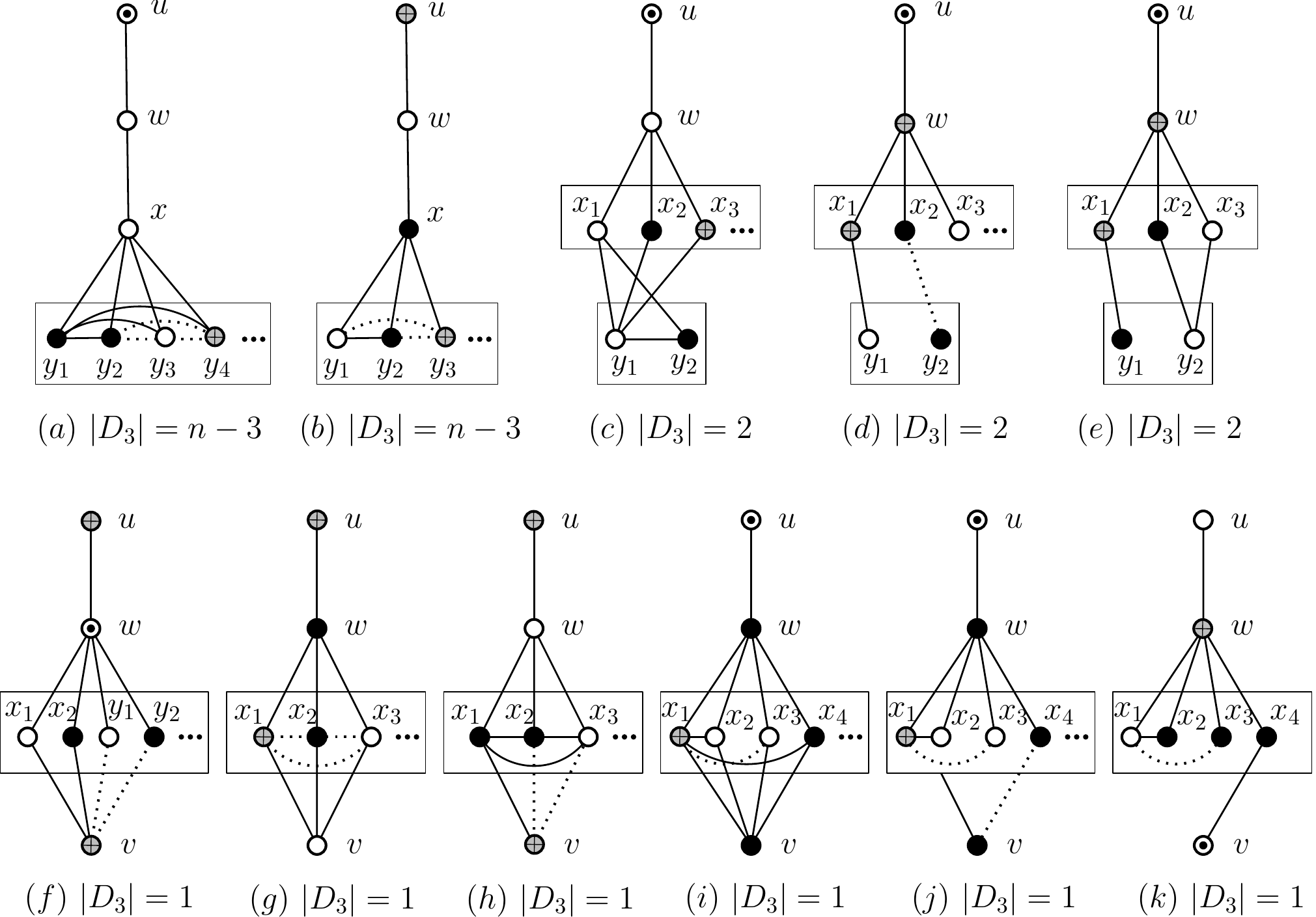}
\caption{Solid (resp. dotted) lines mean adjacent (resp. non-adjacent) vertices.
Vertices with the same "color" belong to the same part.}
\label{figpropdiam3}
\end{center}
\end{figure}

{\bf Case 2:} $|D_3|=2$.
Then, $|D_2|=n-4$.
Let  $D_3=\{ y_1,y_2 \}$.
Recall that both $y_1$ and $y_2$ have at least a neighbor in $D_2$.
We distinguish cases taking into account the degree of the vertices in $D_3$.

\begin{itemize}

\item \emph{There is a vertex of degree $n-3$ in $D_3$}.
We may assume that this vertex is $y_1$,  and it must be adjacent to $y_2$ and to all vertices in $D_2$.
So, there is a vertex $x_1\in D_2$ adjacent to both $y_1$ and  $y_2$.
Let $x_2\in D_2\setminus \{ x_1 \}$ and consider
$S_1= \{ w,x_1,y_1 \}$ and $S_2= \{ x_2,y_2\}$
Then,
$\Pi =\{ S_1, S_2\}\cup \{ \{ z \} :  z\notin S_1\cup S_2 \}$
is a dominating partition, and $\{ u\}$ resolves both $S_1$ and $S_2$
 (see Figure~\ref{figpropdiam3}(c)).
Hence,  $\eta_p(G)\le n-3$, a contradiction.

\item \emph{Both vertices in $D_3$ have degree 1 or 2}.
Let $x_1\in D_2$ be a neighbor of $y_1$.

If there exists a vertex $x_2\in D_2\setminus \{ x_1\}$
not adjacent to $y_2$, let $x_3\in D_2\setminus \{ x_1 ,x_2 \}$.
Consider
$S_1= \{ w,x_1 \}$, $S_2= \{ x_2,y_2\}$ and $S_3=\{ x_3,y_1\} $.
Then, $\Pi =\{  S_1, S_2, S_3 \}\cup \{ \{ z \} :  z\notin S_1\cup S_2\cup S_3 \}$
is a dominating partition and $\{ u \}$ resolves $S_1$, $S_2$ and $S_3$ (see Figure~\ref{figpropdiam3}(d)).
Therefore,  $\eta_p(G)\le n-3$, a contradiction.

If all vertices in  $D_2\setminus \{ x_1\}$ are adjacent to $y_2$, then $\deg(y_2)\ge n-5$,
with means that $2=\deg(y_2)=n-5$  and thus $n=7$.
Let $D_2=\{ x_1, x_2, x_3\}$ and consider
$S_1= \{ w,x_1 \}$, $S_2= \{ x_2,y_1\}$ and $S_3=\{ x_3,y_2\} $.
Then, $\Pi =\{  S_1, S_2, S_3 \}\cup \{ \{ z \} :  z\notin S_1\cup S_2\cup S_3 \}$
is a dominating partition and $\{ u \}$ resolves $S_1$, $S_2$ and $S_3$  (see Figure~\ref{figpropdiam3}(e)).
Therefore,  $\eta_p(G)\le n-3$, a contradiction.

\end{itemize}

{\bf Case 3:}  $|D_3|=1$.
Then, $D_3=\{v \}$ and $|D_2|=n-3$.
We distinguish cases taking into account the degree of $v$ and the subgraph induced by $D_2$.
\begin{itemize}

\item  $\deg (v)=2$. Let $x_1$ and $x_2$ be the two neighbors of $v$, and take $y_1,y_2\in D_2\setminus \{x_1,x_2\}$.
Let $S_1=\{ u,v\}$, $S_2=\{ x_1,y_1\}$ and $S_3=\{ x_2,y_2\}$.
Then,
$\Pi =\{ S_1, S_2, S_3   \} \cup \{ \{ z \} :  z\notin S_1\cup S_2 \cup S_3  \}$
is dominating partition such that $\{ w \}$ resolves $S_1$,  and $S_1$ resolves both $S_2$ and $S_3$
(see Figure~\ref{figpropdiam3}(f)), implying that  $\eta_p(G)\le n-3$, a contradiction.

\item  \emph{$\deg (v)\in \{1,n-3\}$ and $D_2$ induces an empty graph.}

If  $\deg (v)=n-3$, let $x_1,x_2,x_3\in D_2$ and let
$S_1=\{ u,x_1\}$,  $S_2=\{ w,x_2\}$ and $S_3=\{ v,x_3\}$.
Then,
$\Pi =\{ S_1, S_2, S_3  \} \cup \{ \{ z \} :   z\notin S_1\cup S_2 \cup S_3  \}$
is a dominating partition such that $S_1$ resolves both $S_2$ and $S_3$, and $S_3$ resolves $S_1$
(see Figure~\ref{figpropdiam3}(g)), implying that  $\eta_p(G)\le n-3$, a contradiction.

If $\deg (v)=1$, then $G\cong H_{17}$.

\item   \emph{$\deg (v)\in \{1,n-3\}$ and $D_2$ induces a complete  graph.}

If $\deg (v)=n-3$, then $G\cong H_{15}$.

If $\deg (v)=1$, let $x_1\in D_2$ be the neighbor of $v$ and $x_2,x_3\in D_2\setminus \{ x_1 \}$.
Consider
$S_1=\{ u,v\}$,  $S_2=\{ w,x_3\}$ and $S_3=\{ x_1,x_2\}$.
Then,
$\Pi =\{ S_1, S_2, S_3  \} \cup \{ \{ z \} :   z\notin S_1\cup S_2 \cup S_3  \}$
is a dominating partition such that  $S_1$ resolves both $S_2$ and $S_3$, and $S_3$ resolves $S_1$
(see Figure~\ref{figpropdiam3}(h)), implying that  $\eta_p(G)\le n-3$, a contradiction.

\item   \emph{$\deg (v)\in \{1,n-3\}$ and $D_2$ induces neither a complete, nor an empty graph.}

In  that case, there exist vertices $x_1,x_2,x_3\in D_2$ such that $x_1x_2\in E(G)$ and  $x_1x_3\notin E(G)$.

If $\deg (v)=n-3$, then $\deg (x_1)\ge 3$, and thus, $\deg (x_1)\ge n-3$.
Hence, $x_1$ must be adjacent to any other vertex in $D_2$ different from $x_3$.
Let $x_4\in D_2\setminus \{ x_1,x_2,x_3\}$ and
consider
$S_1=\{ w,x_4,v\}$ and  $S_2=\{x_2,x_3\}$
Then,
$\Pi =\{ S_1, S_2  \} \cup \{ \{ z \} :   z\notin S_1\cup S_2 \}$
is a dominating partition such that  $\{ u \}$ resolves $S_1$ and $\{ x_1 \}$ resolves $S_2$
(see Figure~\ref{figpropdiam3}(i)), implying that  $\eta_p(G)\le n-3$, a contradiction.

Finally, suppose that $\deg (v)=1$.
If there is a leaf $x$ in $D_2$, then $d(u,v)=d(x,v)=3$.
In such a case, interchanging the role of the vertices $u$ and $v$, the preceding cases for $|D_3|\ge 2$ apply and we are done. So, we can assume that any vertex in $D_2$ has degree at least 2.
Suppose that $v$ is not adjacent to some vertex $x_4\in D_2\setminus \{ x_1,x_2,x_3\}$.
Notice that such a vertex exists whenever $n\ge 8$, because $D_2$ has at least 5 vertices.
Let $S_1=\{ w,x_4,v\}$ and $S_2=\{x_2,x_3\}$.
Then,
$\Pi =\{ S_1, S_2  \} \cup \{ \{ z \} :   z\notin S_1\cup S_2 \}$
is a dominating partition such that
$\{ u \}$ resolves $S_1$ and $\{ x_1 \}$ resolves $S_2$.
Therefore, $\Pi$ is an RD-partition of $G$  (see Figure~\ref{figpropdiam3}(j)), and so $\eta_p(G)\le n-3$, a contradiction.

Finally, if $n=7$ and the only vertex $x_4\in D_2\setminus \{ x_1,x_2,x_3\}$ is  adjacent to $v$,
take $S_1=\{ x_2,x_3,x_4 \}$ and $S_2= \{ u,x_1 \}$.
Then,
$\Pi =\{ S_1, S_2  \} \cup \{ \{ z \} :   z\notin S_1\cup S_2 \}$
is a dominating partition such that
$\{ v \}$ resolves  both $\{ x_2,x_4 \}$  and $\{x_3,x_4\}$; $S_2$ resolves $\{ x_2,x_3 \}$;
and $S_1$ resolves $S_2$. Therefore, $\Pi$
is an RD-partition of $G$  (see Figure~\ref{figpropdiam3}(k)), and so $\eta_p(G)\le n-3$, a contradiction.
\end{itemize}
\vspace{-.7cm}\end{proof}


As a straightforward consequence of Propositions \ref{propida}, \ref{prop.diametro2deltaN3}, \ref{prop.diametro2delta2} and \ref{prop.diametro3}, the following result is obtained.

\begin{theorem} \label{etan-2}
If $G$ is a graph of order {\color{black} $n\ge 7$},
then  $\eta_p(G)=n-2$ if and only if $G\in \Lambda_n$ (see Figure~\ref{17grafos}).
%
\end{theorem}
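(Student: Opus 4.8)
The plan is to prove the two implications of the equivalence separately, each reducing to a result already established in this section, so that the theorem itself is only an assembly of the preceding work.

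The sufficiency direction --- that every graph in $\Lambda_n$ satisfies $\eta_p(G)=n-2$ --- requires nothing new, being exactly the statement of Proposition \ref{propida}. There the upper bound $\eta_p(H_i)\le n-2$ follows from Theorem \ref{etann-1}, because none of $H_1,\dots,H_{17}$ is isomorphic to $K_n$, $K_{1,n-1}$, $K_{n-2}\vee\overline{K_2}$ or $K_1\vee(K_1+K_{n-2})$, while the matching lower bound $\eta_p(H_i)\ge n-2$ is obtained by exhibiting in each $H_i$ a suitable twin set (of independent vertices, of a clique, or of leaves) and applying the corresponding part of Lemma \ref{hojcomp}.

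For the necessity direction I would assume $\eta_p(G)=n-2$ and first appeal to Lemma \ref{d3etan-2}(2) (equivalently, Corollary \ref{d2d3}) to deduce that $2\le{\rm diam}(G)\le3$, since ${\rm diam}(G)\ge4$ would force $\eta_p(G)\le n-3$. This splits the argument into three mutually exclusive and jointly exhaustive cases, each already settled:
\begin{enumerate}
\item[\rm(a)] ${\rm diam}(G)=2$ and $\delta(G)\ge n-3$, so $G\in\{H_1,H_2,H_3,H_4,H_5\}$ by Proposition \ref{prop.diametro2deltaN3};
\item[\rm(b)] ${\rm diam}(G)=2$ and $\delta(G)\le n-4$, so $G\in\{H_6,\dots,H_{14}\}$ by Proposition \ref{prop.diametro2delta2};
\item[\rm(c)] ${\rm diam}(G)=3$, so $G\in\{H_{15},H_{16},H_{17}\}$ by Proposition \ref{prop.diametro3}.
\end{enumerate}
Taking the union of the three conclusions yields $G\in\Lambda_n=\{H_1,\dots,H_{17}\}$, which finishes the proof.

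The real difficulty lies not in this final assembly but inside the three propositions, whose proofs demand long and delicate case analyses driven by Lemma \ref{lem.fusion}. At the level of the theorem itself the only thing I would check carefully is that the case distinction is genuinely exhaustive --- that every diameter-$2$ graph falls under (a) or (b) according to its minimum degree, and that the seventeen families emerging from the three propositions are exactly $\Lambda_n$, with none omitted and none counted under two different cases.
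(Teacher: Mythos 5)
Your proposal is correct and is essentially identical to the paper's argument: the paper states Theorem \ref{etan-2} as a straightforward consequence of Proposition \ref{propida} (sufficiency) together with Propositions \ref{prop.diametro2deltaN3}, \ref{prop.diametro2delta2} and \ref{prop.diametro3} (necessity, after Lemma \ref{d3etan-2}(2) restricts the diameter to $2$ or $3$). Your case split by diameter and minimum degree is exactly the exhaustive decomposition the paper uses.
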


The solution for $\beta_p(G)=n-2$ is also almost immediately derived.

\vspace{.2cm}
\begin{theorem}\label{betan-2}
If $G$ is a graph of order {\color{black} $n\ge 7$},
then $\beta_p(G)=n-2$ if and only if $G\in \Lambda_n\setminus\{H_{12},H_{17}\}$.
\end{theorem}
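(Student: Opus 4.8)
The plan is to prove the two implications separately, leaning on the characterization $\eta_p(G)=n-2\Leftrightarrow G\in\Lambda_n$ just obtained in Theorem~\ref{etan-2} and on the double inequality $\beta_p(G)\le\eta_p(G)\le\beta_p(G)+1$ of Theorem~\ref{etabeta}. The backward implication costs nothing new: if $G\in\Lambda_n\setminus\{H_{12},H_{17}\}$, then $\beta_p(G)=n-2$ is precisely the ``Moreover'' part of Proposition~\ref{propida}, so I would simply invoke it.

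For the forward implication I would assume $\beta_p(G)=n-2$, so that Theorem~\ref{etabeta} gives $\eta_p(G)\in\{n-2,n-1\}$. The first step is to discard the value $n-1$: were $\eta_p(G)=n-1$, Theorem~\ref{etann-1}(2) would force $G\cong K_{n-2}\vee\overline{K_2}$ or $G\cong K_1\vee(K_1+K_{n-2})$, and Theorem~\ref{mdpd}(5) assigns partition dimension $n-1$ to both of these, contradicting $\beta_p(G)=n-2$. Hence $\eta_p(G)=n-2$, and Theorem~\ref{etan-2} yields $G\in\Lambda_n$.

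It then remains only to exclude $H_{12}$ and $H_{17}$, that is, to show $\beta_p(H_{12})=\beta_p(H_{17})=n-3$. For each of these graphs Proposition~\ref{propida} (Case~3) already provides the lower bound $\beta_p\ge n-3$ via Lemma~\ref{hojcomp}(1) applied to the twin set of $n-3$ leaves, while $\beta_p\le\eta_p=n-2$ leaves only the two candidates $n-3$ and $n-2$. To settle the value I would exhibit an explicit resolving (not necessarily dominating) partition of cardinality $n-3$: place each of the $n-3$ leaves in its own part, and absorb the three vertices lying outside the twin set of leaves (namely the apex together with the two vertices of the extra edge in $H_{12}$, and the handle $u$--$w$--$x$ in $H_{17}$) into three of those parts. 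A routine inspection of the resulting distance vectors confirms the partition is resolving, giving $\beta_p\le n-3$ and hence $\beta_p=n-3\ne n-2$, so $G\in\Lambda_n\setminus\{H_{12},H_{17}\}$.

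I expect this last step to be the only genuine obstacle, and it is exactly where the gap between $\beta_p$ and $\eta_p$ surfaces. In both graphs the $n-3$ leaves share a common neighbour, so any resolving partition of cardinality $n-3$ is forced, by the pigeonhole principle since the $n-3$ leaves must lie in distinct parts, to place that common neighbour in the part of one of the leaves; that leaf is then at distance at least $2$ from every other part and so is undominated. Consequently no size-$(n-3)$ resolving partition can be made dominating, which is why $\eta_p$ rises to $n-2$ while $\beta_p$ stays at $n-3$. Thus the entire content of the theorem is the observation that $H_{12}$ and $H_{17}$ are the only members of $\Lambda_n$ whose partition dimension falls to $n-3$; everything else is inherited from the previously proved results.
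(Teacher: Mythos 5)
Your proposal is correct and follows essentially the same route as the paper: backward direction via Proposition~\ref{propida}, forward direction by forcing $\eta_p(G)=n-2$ from the characterizations of the extremal cases and then applying Theorem~\ref{etan-2}, and finally showing $\beta_p(H_{12})=\beta_p(H_{17})=n-3$ by combining the twin-set lower bound with an explicit resolving partition of cardinality $n-3$ (which the paper gives in Figure~\ref{h12h17} and you describe verbally, to the same effect).
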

\begin{proof}
If $G\in \Lambda_n\setminus\{H_{12},H_{17}\}$ then, according to Proposition \ref{propida}, $\beta_p(G)=n-2$.
\newline Conversely, let $G$ be a graph of order $n\ge 7$ such that $\beta_p(G)=n-2$.
Thus, $\eta_p(G)=n-2$, since by Theorem \ref{mdpd} and Theorem \ref{etann-1} we know that $\beta_p(G)\ge n-1$ if and only if
$\eta_p(G)\ge n-1$.
Hence, by Theorem \ref{etan-2}, we derive that $G \in \Lambda_n$.
Finally, $\beta_p(G)= n-3$ if $G\in \{ H_{12},H_{17} \}$.
Indeed, in such a case, $\beta_p(G)\ge n-3$, because $G$ contains a twin set of cardinality $n-3$, and a resolving partition of cardinality $n-3$ for $H_{12}$ and $H_{17}$ is shown in Figure \ref{h12h17}.
\end{proof}

\begin{figure}[ht]
\begin{center}
\includegraphics[width=0.52\textwidth]{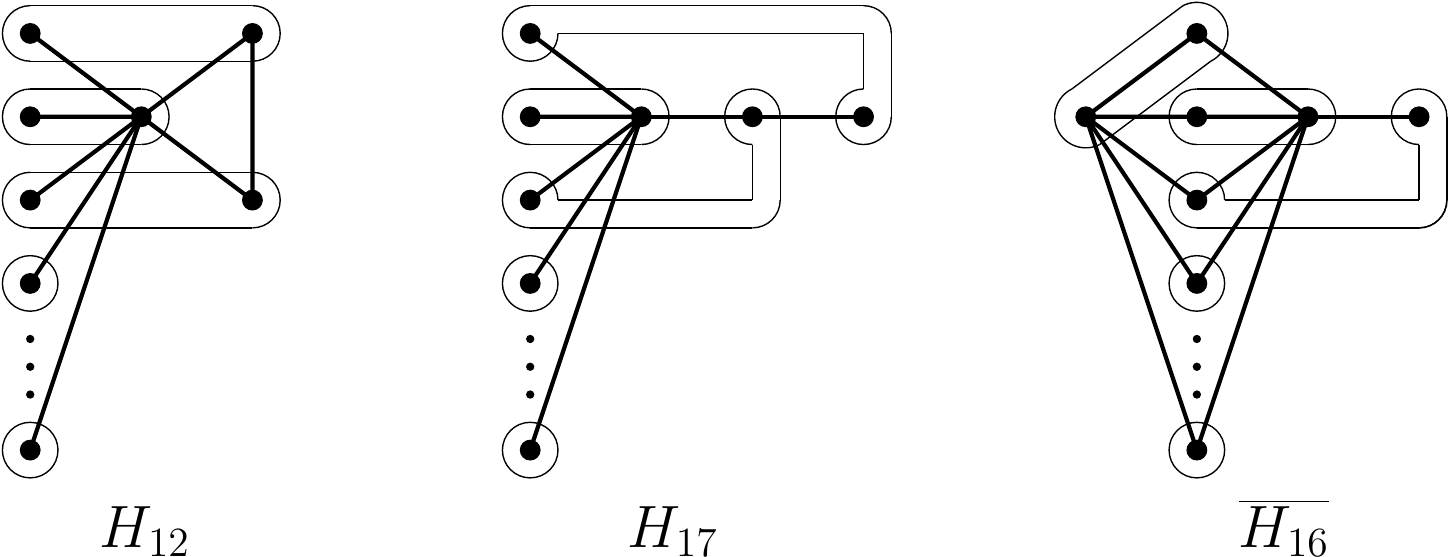}
\caption{Resolving partitions of cardinality $n-3$ of $H_{12}$, $H_{17}$ and $\overline{H_{16}}$.}
\label{h12h17}
\end{center}
\end{figure}

\vspace{.2cm}
\begin{remark}\label{wrong}
Theorem \ref{betan-2} corrects an  inaccurate result shown in \cite {tom08} (Theorem 3.2).
\end{remark}

\vspace{.3cm}
A graph $G$ is called \emph{doubly-connected} if both $G$ and its complement $\overline{G}$ are connected.
We finally show a couple of Nordhaus-Gaddum-type results, which are  a straightforward consequence of Theorems \ref{etan-2} and \ref{betan-2}.

\begin{theorem} \label{ngresult3}
If $G$ is a doubly-connected graph of order $n\ge3$,
then
\begin{enumerate}

\item[\rm(1)] $6 \le \eta_p(G)+\eta_p(\overline{G}) \le 2n-4$.

\item[\rm(2)] The equality $\eta_p(G)+\eta_p(\overline{G})=6$  is attained, among others, by $P_4$ and $C_5$.

\item[\rm(3)] If $n\ge7$, then $\eta_p(G)+\eta_p(\overline{G})=2n-4$ if and only if  $G\in \{H_{15},H_{17}\}$.

\end{enumerate}
\end{theorem}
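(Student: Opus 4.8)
The plan is to prove each of the three parts of this Nordhaus-Gaddum theorem by leveraging the general bounds on $\eta_p$ together with the exact characterizations already established. For the lower bound in part (1), I would observe that since $G$ is connected and non-trivial (as $n\ge 3$ and both $G,\overline{G}$ are connected, forcing $n\ge 4$), Proposition~\ref{p2} gives $\eta_p(G)=2$ only when $G\cong K_2$, which cannot occur for a doubly-connected graph. Hence $\eta_p(G)\ge 3$ and similarly $\eta_p(\overline{G})\ge 3$, yielding $\eta_p(G)+\eta_p(\overline{G})\ge 6$. For the upper bound, the key fact is that a graph and its complement cannot both have diameter $2$ with all the degree restrictions that force $\eta_p=n$; more precisely, one of $G,\overline{G}$ must avoid being $K_n$ or $K_{1,n-1}$, so by Theorem~\ref{etann-1}(1) at most one of them attains value $n$. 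I would argue that if $\eta_p(G)=n$ then $G\in\{K_n,K_{1,n-1}\}$ by Theorem~\ref{etann-1}(1), but $K_n$ is not doubly-connected and the complement of $K_{1,n-1}$ is disconnected, so $G$ doubly-connected forces $\eta_p(G)\le n-1$ and likewise for $\overline{G}$, giving the sum $\le 2n-2$. To sharpen this to $2n-4$, I would show that $G$ and $\overline{G}$ cannot both equal $n-1$, using Theorem~\ref{etann-1}(2): the two extremal graphs $K_{n-2}\vee\overline{K_2}$ and $K_1\vee(K_1+K_{n-2})$ each have a complement that is either disconnected or has a strictly smaller value.

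For part (2), the task is purely verificational: I would directly exhibit that $P_4$ and $C_5$ each satisfy $\eta_p(G)=\eta_p(\overline{G})=3$. Since $\overline{P_4}\cong P_4$ and $\overline{C_5}\cong C_5$ are self-complementary, and the earlier proposition establishes $\eta_p(P_n)=\eta_p(C_n)=3$ for $n\ge 3$, the sum is $3+3=6$, matching the lower bound of part (1). This requires only checking self-complementarity, which is standard.

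The core of the theorem is part (3), characterizing equality in the upper bound. The plan is to observe that $\eta_p(G)+\eta_p(\overline{G})=2n-4$ together with the bound $\eta_p\le n-1$ (for doubly-connected graphs) from part (1) forces $\eta_p(G)=\eta_p(\overline{G})=n-2$. By Theorem~\ref{etan-2}, this means both $G\in\Lambda_n$ and $\overline{G}\in\Lambda_n$. The key step is then to compute the complement of each of the seventeen graphs $H_1,\dots,H_{17}$ and determine which of them lie in $\Lambda_n$; the answer should single out exactly $\{H_{15},H_{17}\}$, which means I expect $\overline{H_{15}}$ and $\overline{H_{17}}$ to again belong to $\Lambda_n$ (indeed, $H_{15}$ and $H_{17}$ should be complementary or near-complementary within the family), while the complements of all other $H_i$ either fail to be in $\Lambda_n$ or fail the diameter/degree constraints needed for membership. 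I would verify the ``if'' direction by directly checking $\eta_p(H_{15})=\eta_p(\overline{H_{15}})=n-2$ and $\eta_p(H_{17})=\eta_p(\overline{H_{17}})=n-2$, using Proposition~\ref{propida} for the graphs themselves and identifying their complements explicitly within $\Lambda_n$.

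The main obstacle will be the complement computation in part (3): carefully working out $\overline{H_i}$ for each of the seventeen graphs and matching it against the family $\Lambda_n$ is tedious and error-prone, since the join/union structures (e.g.\ $\overline{K_{n-3}\vee(K_2+K_1)}$) must be translated correctly under complementation, where joins become unions and vice versa. The figure showing $\overline{H_{16}}$ (Figure~\ref{h12h17}) suggests the authors have precomputed the relevant complements, so the cleanest route is to tabulate each $\overline{H_i}$ and eliminate all candidates except $H_{15}$ and $H_{17}$ by noting structural obstructions such as a complement being disconnected, having the wrong diameter, or containing a twin set forcing $\eta_p\ge n-1$. Once the pairing $\{H_{15},H_{17}\}$ is confirmed, the equality and its tightness follow immediately from Theorem~\ref{etan-2}.
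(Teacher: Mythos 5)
Your proposal is correct and follows essentially the same route as the paper: the lower bound via $\eta_p\ge 3$ for doubly-connected graphs, the upper bound via Theorem~\ref{etann-1} (the graphs with $\eta_p\ge n-1$ all have disconnected complements, so a doubly-connected graph has $\eta_p\le n-2$), and part (3) by reducing to $\eta_p(G)=\eta_p(\overline{G})=n-2$ and checking which members of $\Lambda_n$ have their complement also in $\Lambda_n$, with $\overline{H_{15}}=H_{17}$ as you anticipated. One small caution: phrase the sharpening step as ``neither $G$ nor $\overline{G}$ can attain $n-1$'' rather than ``they cannot both attain $n-1$,'' since the latter would only yield $2n-3$; your own justification (both extremal graphs have disconnected complements) already gives the stronger statement.
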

\begin{proof}
\begin{enumerate}

\item[\rm(1)]
Note that $\eta_p(G)=2$ if and only if $G\cong P_2$, but in this case $\overline{G}$ is not connected. Thus, if $G$ is a doubly-connected graph of order $n$, then $\eta_p(G)\geq 3$ and $\eta_p(\overline{G})\geq 3$, and the lower bound holds.
On other hand, by Theorem~\ref{etann-1},  if $\eta_p(G)\ge n-1$, then $\overline{G}$ is not connected. Thus, $\eta_p(G)+\eta_p(\overline{G}) \le 2n-4$.

\item[\rm(2)]
We know that $\overline{P_4}=P_4$ and $\overline{C_5}= C_5$, and it is easily verified that $\eta_p(P_4)=3$ and $\eta_p(C_5)=3$. Hence, $P_4$ and $C_5$ satisfy the given equality.

\item[\rm(3)]
Finally, a doubly-connected graph $G$ of order at least 7 attaining the upper bound must satisfy $\eta_p(G)=\eta_p(\overline{G})=n-2$.
Therefore, the equality $\eta_p(G)+\eta_p(\overline{G})=2n-4$ is attained if and only if $\{G,\overline{G}\}\subseteq\{  H_1,\dots ,H_{17}\}$ (see Theorem \ref{etan-2}).
It is easy to check that this is satisfied if and only if $G\in \{ H_{15},H_{17}\}$ (observe that $\overline{H_{15}}=H_{17}$).
\end{enumerate}
\end{proof}

\begin{theorem} \label{ngresult4}
If $G$ is a doubly-connected graph of order $n\ge3$,
then

\begin{enumerate}

\item[\rm(1)]  $4 \le \beta_p(G)+\beta_p(\overline{G}) \le 2n-5$.

\item[\rm(2)]  $\beta_p(G)+\beta_p(\overline{G})=4$ if and only if $G = P_4$.

\item[\rm(3)]  If $n\ge7$, then $\beta_p(G)+\beta_p(\overline{G})=2n-5$ if and only if $G\in\{ H_{15},H_{16},H_{17}\}$.

\end{enumerate}

\end{theorem}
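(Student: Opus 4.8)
The plan is to deduce everything from the extremal characterizations already available, exactly as in the proof of Theorem \ref{ngresult3}. The background facts I would use are that every connected graph of order at least $2$ has $\beta_p\ge 2$, with equality precisely for paths (Theorem \ref{mdpd}(3)), and that $\beta_p=n-2$ singles out the family $\Lambda_n\setminus\{H_{12},H_{17}\}$ (Theorem \ref{betan-2}). I would prove the three items in turn. For the lower bound in (1), since $G$ and $\overline{G}$ are both connected of order $n\ge3$, each contributes at least $2$, so $\beta_p(G)+\beta_p(\overline{G})\ge4$. For (2), equality $4$ forces $\beta_p(G)=\beta_p(\overline{G})=2$, hence both $G$ and $\overline{G}$ are paths; as $\overline{P_n}$ is a path only for $n=4$ (with $\overline{P_4}\cong P_4$) and $P_4$ is doubly-connected, the unique solution is $G\cong P_4$.

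For the upper bound in (1) I would first note that a doubly-connected graph cannot reach $\beta_p\ge n-1$: by Theorem \ref{mdpd}(4),(5) the graphs with $\beta_p\in\{n-1,n\}$ are $K_n$, $K_{1,n-1}$, $K_{n-2}\vee\overline{K_2}$ and $K_1\vee(K_1+K_{n-2})$, and each has a disconnected complement. Hence $\beta_p(G),\beta_p(\overline{G})\le n-2$, giving $\beta_p(G)+\beta_p(\overline{G})\le 2n-4$. To sharpen this to $2n-5$ I must exclude $\beta_p(G)=\beta_p(\overline{G})=n-2$; by Theorem \ref{betan-2} this would require both $G$ and $\overline{G}$ to lie in $\Lambda_n\setminus\{H_{12},H_{17}\}$, which the complement analysis below rules out.

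For (3), with each term at most $n-2$ and the both-equal-$(n-2)$ case excluded, the sum equals $2n-5$ exactly when $\{\beta_p(G),\beta_p(\overline{G})\}=\{n-2,n-3\}$. So I would scan the fifteen graphs of $\Lambda_n\setminus\{H_{12},H_{17}\}$ for the doubly-connected ones. Using $\overline{A\vee B}=\overline{A}+\overline{B}$, the eleven graphs $H_1,\dots,H_{10},H_{13}$ are joins and thus have disconnected complement, while $H_{11}$ and $H_{14}$ each contain a universal vertex, so their complements are disconnected too; only $H_{15}$ and $H_{16}$ survive as doubly-connected. Their complements are identified via $\overline{H_{15}}=H_{17}$ (already used for Theorem \ref{ngresult3}) and via the resolving partition of $\overline{H_{16}}$ of cardinality $n-3$ exhibited in Figure \ref{h12h17}, whose optimality follows from the twin-set bound of Lemma \ref{hojcomp}; since $\beta_p(H_{17})=n-3$ as well, these two graphs together with their complements are exactly the ones realizing the maximum $2n-5$.

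The main obstacle is this last complement case analysis over $\Lambda_n\setminus\{H_{12},H_{17}\}$: for each $H_i$ one must decide whether $\overline{H_i}$ is connected and, when it is, compute its partition dimension. The two delicate points are establishing the isomorphism $\overline{H_{15}}=H_{17}$ and the value $\beta_p(\overline{H_{16}})=n-3$; everything else is bookkeeping, resting on the observation that most members of the family are joins (hence have disconnected complement) or have a universal vertex, and on Theorems \ref{mdpd} and \ref{betan-2}.
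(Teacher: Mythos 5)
Your proposal follows the paper's proof essentially step for step: the lower bound and item (2) come from $\beta_p\ge 2$ with equality only for paths and $\overline{P_n}\cong P_n$ only for $n=4$; the bound $\beta_p(G)\le n-2$ for doubly-connected graphs comes from Theorem \ref{mdpd}(4)--(5); and the upper bound and item (3) come from scanning $\Lambda_n\setminus\{H_{12},H_{17}\}$ (Theorem \ref{betan-2}) to find that only $H_{15}$ and $H_{16}$ are doubly-connected, with $\overline{H_{15}}=H_{17}$ and $\beta_p(H_{17})=\beta_p(\overline{H_{16}})=n-3$. Your join/universal-vertex observation simply makes explicit the step the paper dismisses as ``easy to check,'' so the two arguments coincide.
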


\begin{proof}
\begin{enumerate}

\item[\rm(1)]  Every graph $G$ of order at least 3 satisfies  $\beta_p(G)\ge 2$. Hence, the lower bound holds.
By Theorem~\ref{mdpd}, if a graph $G$ satisfies $\beta_p(G)\ge n-1$, then $\overline{G}$ is not connected. Therefore, any doubly-connected graph
$G$ satisfies $\beta_p(G)\le n-2$. By Theorem~\ref{betan-2}, the graphs $G$ satisfying $\beta_p(G)=n-2$ are those from $\Lambda_n\setminus\{H_{12},H_{17}\}$. It is easy to check that the only doubly-connected graphs of this set are $H_{15}$ and $H_{16}$. Their complements are $\overline{H_{15}}=H_{17}$, and $\overline{H_{16}}$ is shown in Figure~\ref{h12h17}. On the one hand, we have seen in the proof of Theorem~\ref{betan-2} that $\beta_p(H_{17})=n-3$. On the other hand, we have that  $\beta_p(H_{16})=n-3$.
Indeed, $\beta (H_{16})\le n-3$ because $H_{16}$ has a twin set of cardinality  $n-3$, and a resolving partition of cardinality $n-3$ is given in Figure~\ref{h12h17}.
Hence, $\beta_p(G)+\beta_p(\overline{G})\le 2n-5$ if $G$ is doubly-connected.

\item[\rm(2)]  We know that $\beta_p(G)=2$ if and only if $G$ is the path $P_n$, and $\overline{P_n}$ is a path if and only if $n=4$. Hence,
the equality $\beta_p(G)+\beta_p(\overline{G})=4$ holds if and only if $G\cong P_4$.

\item[\rm(3)]  This equality is satisfied if and only if $G$ is a doubly-connected graph such that $\{ \beta_p(G),\beta_p(\overline{G} \}=\{ n-2,n-3\}$, and as we have seen in the proof of item i), it happens if and only if $G\in\{ H_{15},H_{16},H_{17}\}$.
\end{enumerate}
\end{proof}


\noindent
{\bf Acknowledgement} The authors are thankful to the anonymous referees for their valuable comments and remarks, which helped to improve the presentation of this paper.


\end{document}